\newcommand{\vertiii}[1]{{\left\vert\kern-0.25ex\left\vert\kern-0.25ex\left\vert #1 
    \right\vert\kern-0.25ex\right\vert\kern-0.25ex\right\vert}}
\newcommand{\stars}{}
\DeclareRobustCommand{\stars}[1]{\stars@{#1}}
\newcommand{\stars@}[1]{%
  \ifcase#1\relax\or\stars@one\or\stars@two\or\stars@three\or\stars@four
  \else ??\fi
}
\newcommand*\dif{\mathop{}\!\mathrm{d}}
\newcommand{\Prob}{\mathbb{P}}
\newcommand{\R}{\mathbb{R}}
\DeclareMathOperator*{\argmax}{arg\,max}
\DeclareMathOperator*{\argmin}{arg\,min}
\newtheorem{remark}{Remark}
\newtheorem{corollary}{Corollary}
\newtheorem{definition}{Definition}
\newtheorem{proposition}{Proposition}
\newtheorem{theorem}{Theorem}
\newcommand{\C}{\mathcal{C}}
\newcommand{\B}{\mathbb{B}}
\newtheorem{thm}{Theorem}
\newtheorem{Proposition}{Proposition}
\newtheorem{Lemma}{Lemma}
\newtheorem{assumption}[thm]{Assumption}
\newtheorem*{assumption*}{Assumption}
\begin{document}

\title{A non-local estimator for locally stationary Hawkes processes}
\author[]{\footnotesize Thomas Deschatre$^1$ \hspace{0.2cm} Pierre Gruet$^1$ \hspace{0.2cm} Antoine Lotz$^{1,2}$ \vspace{1em}}
\address{\footnotesize	$^1$\textsc{EDF} \textsc{R}\&\textsc{D} , \textsc{F}i{me}  , Palaiseau, France
\\\vspace{-0.1em}
$^1$ \textsc{Université Paris Dauphine - PSL}, \textsc{PSL}, Paris, France}
\begin{abstract}
      We consider the problem of estimating the parameters of a non-stationary Hawkes process with time-dependent reproduction rate and baseline intensity. Our approach relies on the standard maximum likelihood estimator (\textsc{mle}), coinciding with the conventional approach for stationary point processes characterised by~\cite{OgataMLE}. In the fully parametric setting, we find that the \textsc{mle} over a single observation of the process over $[0,T]$ remains consistent and asymptotically normal as $T \to \infty$. Our results extend partially to the semi-nonparametric setting where no specific shape is assumed for the reproduction rate $g \colon [0,1] \mapsto \R_+$. We construct a time invariance test with null hypothesis that $g$ is constant against the alternative that it is not, and find that it remains consistent over the whole space of continuous functions of $[0,1]$. As an application, we employ our procedure in the context of the German intraday power market, where we provide evidence of fluctuations in the endogeneity rate of the order flow. \\
      
   \noindent \textbf{Mathematics Subject Classification (2020)}: \textsc{62F03, 62F12, 60G55}\\
\noindent \textbf{Keywords}: Hawkes processes, locally stationary process, maximum likelihood estimation. 
\\
   
\end{abstract}

\maketitle

\begin{center}
    
\end{center}

\section{Introduction}\label{section:introduction}
\subsection{Context and motivation} A Hawkes process $(\boldsymbol{N}_t)=(N_{k,t})$, $k =1 \hdots K$, is a path-dependent point process, wherein past events in a component $(N_{k,t})$ may induce later occurrences at any other $(N_{l,t})$ via mutual excitation. The structure of the process is encoded within a kernel function $\phi=(\phi_{kl}) \colon \R \mapsto \R_+^{K} \times \R_+^{K}$, with the integral $\int_0^{\infty} \phi_{kl}(s) \dif s$ quantifying the strength of the influence of $(N_{l,t})$ upon $(N_{k,t})$. Formally, the intensity $(\lambda_{k,t})$ of the process writes
\begin{equation*}
    \lambda_{k,t} = \mu_k + \sum_{l=1}^K\int_0^t \phi_{kl}(t-s) \dif N_{l,s},
\end{equation*}
where $\mu_k \in \R_+$ and the integral runs over $[0,t)$. In its seminal construction at $K=1$ due to~\cite{Oakes}, the process is endowed a cluster representation. Clusters arrive following a Poisson process with rate $\mu$, and expand according to a Galton-Watson dynamic with mean offspring number $\varrho=\int_0^{\infty} \phi(s) \dif s$. The parameter $\varrho$ is thus referred to as a reproduction rate, or endogeneity ratio. The process $(\boldsymbol{N}_t)$ is then known to reach stationarity in the sub-critical case $\varrho<1$. The cluster representation and the stationarity condition both extend to the multivariate setting (see e.g~\cite{Reynaud} and \cite{bremaud1996stability}), the latter then depending on the spectral radius of $\int_0^{\infty} \phi(s) \dif s$.\\

Hawkes processes have found productive use cases in seismology \cite{OgataETAS}, neuroscience \cite{Reynaud}, statistical finance \cite{bacrymicrostructure}, among other domains. In the wake of these many applications, some recent attention has been dedicated to time-dependent extensions of the model. Time-dependent cluster arrivals were introduced in \cite{FengInferenceForNonStationarySEPP} so as to model non-stationary phenomena in financial markets, and time-dependent reproduction kernels $\phi$ considered by~\cite{ROUEFF1}, with comparable applications. The process $(\boldsymbol{N}^T_t)$ then lives on the compact timeframe $[0,T]$, its kernel depending on the normalized duration $\nicefrac{t}{T}$ and its intensity generally expressing as
\begin{equation}\label{equ:general_intensity}
    \lambda^T_{k,t}
    =
    \Phi_k
    \Bigl[ 
    \mu_k\bigl( \frac{t}{T}\bigr)
    +
    \sum_{l=1}^K
    \int_0^t \phi_{l,s}\bigl(t-s,\frac{t}{T} \bigr)
    \dif N^T_{l,s}
    \Bigr], \hspace{0.1cm} t \in [0,T]
\end{equation}
where $k=1\hdots K$, the $\Phi_k \colon \R \mapsto \R_+$ are activation functions, and $\mu_k \colon [0,1] \mapsto \R_+$ The results of~\cite{ROUEFF1} include the formal basis for the estimation of the model. This is completed in~\cite{roueff2}, where the asymptotics of local mean density and local Bartlett estimators are characterized. An analogous line of research is followed in \cite{Clinet}, where the convergence of local maximum likelihood estimators (local \textsc{mle}s) is obtained in the case of gamma kernels $\phi(s,\nicefrac{t}{T})  =  g(\nicefrac{t}{T}) s^m \exp(-\beta(\nicefrac{t}{T}) s)$ and mixtures thereof, where $g$ and $\beta$ are positive-valued real functions and $m \in \mathbb{N}^\star$. Finally, \cite{MammenMuller} obtain the convergence of local polynomial estimators in the compact support case $\phi(s,\nicefrac{t}{T}) = g(\nicefrac{t}{T}) \mathbb{1}_A(s)$, where $A$ is a known compact interval of $\R_+$. \\

In a companion paper~\cite{deschatre2025limittheoremslocallystationary}, we consider a subclass of locally stationary Hawkes processes~\eqref{equ:general_intensity} with linear activation $\Phi_k(x) \propto x$ and separable kernel
  \begin{equation}\label{equ:separable_kernel}
      \phi_{kl}\bigl(s,\frac{t}{T} \bigr) = g\bigl(\frac{t}{T} \bigr) \varphi_{kl}(s),
  \end{equation}
  where $g \colon [0,1] \mapsto \R_+$ and $\varphi_{kl} \colon \R_+ \mapsto \R_+$. The function $g$ thus corresponds to a time-dependent reproduction rate in the representation of~\cite{Oakes}. We find that the asymptotic behaviour of $(\boldsymbol{N}^T_t)$ may then be assimilated to that of its stationary counterpart, in that it obeys a functional central limit Theorem generalizing the limit results found in the case $\mu \equiv 1$ and $g \equiv 1$ (see \cite{bacrylimit}).  In this article, we show that a similar correspondence holds regarding the estimation of the process. Working in the fully parametric setting, we prove that the conventional maximum likelihood estimator (\textsc{mle}) for stationary point processes, as introduced in~\cite{OgataMLE}, may be indifferently applied to non stationary processes with intensity~\eqref{equ:general_intensity} and kernel~\eqref{equ:separable_kernel}, for any Lipschitz-continuous $(\Phi_k)$ and any sufficiently integrable $(\varphi_{kl})$. We also deduce a semi-nonparametric test for time-dependencies in the reproduction rate $g$.  In contrast with the results of \cite{roueff2}\cite{MammenMuller}, and \cite{Clinet}, the resulting procedure is not specifically adapted to the locally stationary context. Hence we refer to the present approach as \textit{naïve} one.\\
  
  We are motivated by~\cite[Chapter 3]{kwan2023asymptotic}, where the consistency and weak convergence of the conventional \textsc{mle} is obtained in the univariate case $K =1$ for a time-dependent baseline $\mu$, a constant reproduction rate $g \equiv 1$, a linear activation $\Phi_k(x) \propto x$ and any sufficiently integrable $\varphi$. We find that their results may be extended to any intensity of the form~\eqref{equ:separable_kernel} as detailed in the following subsection.

\subsection{Contributions} Suppose that one observes over $[0,T]$ a single trajectory of a locally stationary Hawkes process $(\boldsymbol{N}^T_t)$ with intensity
\begin{equation*}
    \lambda^T_{k,t}(\eta,g) =  \Phi_k\Bigl[ \mu\big( \frac{t}{T},\eta \big) + \sum_{l=1}^K \int_0^t  g\big( \frac{t}{T}\big)\varphi_{kl}(t-s,\eta) \dif N^T_s \Bigr],
\end{equation*}
where $\eta$ lies in a compact subspace of $\R^p$ with non-empty interior, and $g \in C[0,1]$. Our primary concern lies with the time-inhomogeneous kernel $k^T(t,s,\eta,g) = g(\frac{t}{T}) \varphi(t-s,\eta)$, and the amplitude of its fluctuations as $\nicefrac{t}{T}$ varies. In the fully parametric setting where one has access to a family $g(\cdot,\varpi)$ indexed on $\varpi \in \R^{d+1}$ containing the true reproduction rate, we obtain in Theorem~\ref{thm:TCL} a central limit Theorem for the maximum likelihood estimator (\textsc{mle}) of $\vartheta = (\eta,\varpi)$, generalizing some prior results of Ogata~\cite{OgataMLE} and~\cite{kwan2023asymptotic} to~\eqref{equ:general_intensity}. In the more general case where no specification is imposed upon $g$ apart from continuity, we construct a test with null hypothesis that $g$ is constant, against the general alternative that it is not.  Our test leverages the results of the parametric sub-case, and relies on a maximum likelihood estimator $\hat{g}^\mathfrak{d}_T$ of $g$ defined as a combination of Bernstein polynomials of fixed degree $\mathfrak{d} \in \mathbb{N}^\star$
\begin{equation*}
    \hat{g}^\mathfrak{d}_T\bigl(\frac{t}{T} \bigr)
    =
    \sum_{k=0}^{\mathfrak{d}} \hat{\varpi}_T \binom{\mathfrak{d}}{k} \bigl(1-\frac{t}{T}\bigr)^{\mathfrak{d}-k} \bigl(\frac{t}{T}\bigr)^k,
\end{equation*}
where $(\hat{\varpi}_i)$ maximizes the model's likelihood.  We characterize the asymptotic distribution of the associated likelihood ratio statistic $\Lambda^\mathfrak{d}_T$ as defined in~\ref{def:LRS}. Under the null hypothesis, the problem reduces to a purely parametric question, and we recover in Theorem~\ref{tm:LRT_for_g_constant_or_not} the classical convergence of $\Lambda^\mathfrak{d}_T$ towards a $\chi^2$ law with $\mathfrak{d}$ degrees of freedom.  Under the alternative, we prove in Proposition~\ref{Proposition:robust} that, provided $\mathfrak{d}$ is above a certain explicit threshold depending only on $g$ but not on $T$, the method provides a consistent test for any non-constant $g \in C[0,1]$. While we allow for a non-linear intensity, meaning that $\nicefrac{\Phi_k(x)}{x}$ is not necessarily constant, our setting forbids inhibitive interactions, in the sense that the reproduction rate $g$ and the kernels $\varphi_{kl}$ should take non-negative values. This typically results in positivity constraints on certain coordinates of $\vartheta$, thus pulling null parameters out of the interior of the parameter space. One must therefore consider the existence of nuisance parameters on the boundary of the parameter space, and their effect on the behaviour of the test. We introduce in Corollary~\ref{coro:correction} a simple correction in the degrees of freedom of $\Lambda^\mathfrak{d}_T$ accounting for such singularities.\\

\subsection{Relation to other works and principle of the proof} The results of Theorems~\ref{tm:LRT_for_g_constant_or_not} and~\ref{thm:TCL} belong to likelihood-based parametric statistics for point processes. The main theoretical difficulty associated with this class of procedures lies in obtaining the ergodicity of the functionals of $(\lambda^T_t)$ defined within the log-likelihood. We rely to this end on some properties of the classical imbedding representation of $(\boldsymbol{N}^T_t)$, see~\cite{bremaud1996stability}, which allows for useful coupling-like arguments. We refer to~\cite{DVJ} for a typical example of such strategy at work. The technique also appears in~\cite{ClinetYoshida}, where the desired ergodicity is proven in the case $\varphi_{kl}(t) \propto \exp( - \beta t)$, as part of a systematic framework including for the parametric estimation of point processes. The case of gamma kernels $\varphi_{kl} \colon t \propto t^k \exp(- \beta t)$ deduces from~\cite{Clinet}, and the general case is covered by~\cite[Chapter 3]{kwan2023asymptotic} (\textsc{kcd}), whom generalise the embedding-based strategy to varying baselines and arbitrary kernels. \\

 The array of techniques defined within this broad corpus may be regarded as a standard approach within which falls our proof for Theorems and~\ref{thm:TCL} and~\ref{tm:LRT_for_g_constant_or_not}.  Specifically, we find that the program of~\cite{ClinetYoshida} may be satisfied in the locally stationary setting defined by~\eqref{equ:general_intensity} and~\eqref{equ:separable_kernel} via a direct transposition of the \textsc{kcd} proof scheme. We observe that a well-known decomposition of $(\lambda^T_t)$ due to~\cite{JaissonRosenbaum} may be partially extended to the locally stationary case, and some key points of the method considerably shortened as a consequence. Apart from theses technicalities, our proofs only deviate from the standard approach in the specific development required by the semi-nonparametric context of Proposition~\ref{Proposition:robust}, which mainly relies on elementary convexity arguments instead.
\section{Setting}

\subsection{Model and assumptions}
Let $(\Omega,\mathcal{F})$ be some measurable space. Throughout the rest of the article, we rely on the following construction. 

\begin{proposition}\label{prop:embedding}Let $\Prob$ be a probability measure on $(\Omega,\mathcal{F})$. Suppose that $K$ independent Poisson measures $\pi_1 \hdots \pi_K$ on $\R^2$ with unit intensity are defined within $(\Omega,\mathcal{F},\Prob)$. Denote by $(\mathcal{F}^\pi_t)$ the natural filtration of the Poisson process $(\pi_k(0,t])$. Then, for any $(\mathcal{F}^\pi_t)$-predictable process $(\lambda_{k,t})$, the process
    \begin{equation*}
        (N_{k,t})
        =
        \big( 
        \int_0^t \int_{0}^{\infty} \mathbb{1}_{\{\lambda_{k,s} \leq x \}} \pi_k(\dif x, \dif s) \big)
    \end{equation*}
    admits $(\lambda_{k,t})$ as its intensity.
\end{proposition}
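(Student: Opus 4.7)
The plan is to verify directly that
\begin{equation*}
    M_{k,t} := N_{k,t} - \int_0^t \lambda_{k,s} \dif s
\end{equation*}
is an $(\mathcal{F}^\pi_t)$-local martingale, which by definition is precisely the statement that $(\lambda_{k,t})$ is the $(\mathcal{F}^\pi_t)$-intensity of $(N_{k,t})$. This is a classical embedding result, appearing for instance in Br\'emaud, so the strategy is to assemble standard ingredients from the theory of stochastic integrals with respect to Poisson random measures rather than to invent anything new.

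First I would handle the measurability and predictability of the integrand. Since $(\lambda_{k,s})$ is $(\mathcal{F}^\pi_t)$-predictable by assumption, the map $(\omega,s,x) \mapsto \mathbb{1}_{\{\lambda_{k,s}(\omega) \leq x\}}$ is $\mathcal{P} \otimes \mathcal{B}(\R_+)$-measurable, where $\mathcal{P}$ denotes the predictable $\sigma$-algebra on $\Omega \times \R_+$. This places us within the classical theory of stochastic integration against the compensated measure $\tilde{\pi}_k(\dif x,\dif s) := \pi_k(\dif x,\dif s) - \dif x \dif s$, and Fubini gives the compensator explicitly:
\begin{equation*}
    \int_0^t \int_0^\infty \mathbb{1}_{\{\lambda_{k,s} \leq x\}} \dif x \dif s = \int_0^t \lambda_{k,s} \dif s.
\end{equation*}

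Next I would invoke the standard result that for any non-negative predictable integrand $H$ with $\E{\int_0^t \int_0^\infty H(s,x) \dif x \dif s} < \infty$, the process $\int_0^\cdot \int_0^\infty H \dif \tilde{\pi}_k$ is an $L^2$-martingale. Introducing the stopping times $\tau_n = \inf\{t : \int_0^t \lambda_{k,s} \dif s \geq n\}$ reduces the general case to this $L^2$ setting by localisation, and yields that $M_{k,t}$ is an $(\mathcal{F}^\pi_t)$-local martingale. The main (mild) obstacle lies in the bookkeeping surrounding joint measurability on $\Omega \times \R_+ \times \R_+$ and in the localisation, since the predictability of the indicator at the level set $\{\lambda_{k,s} = x\}$ must be handled via left-continuity of $\lambda_{k,s}$ in $s$ (the offending set being $\dif x$-null, this is harmless) and no global integrability of $\lambda_k$ is assumed. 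The independence across $k$ plays no role beyond ensuring that the construction extends coordinatewise, so the argument is carried out for each $k$ separately.
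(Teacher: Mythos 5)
The paper does not actually prove Proposition~\ref{prop:embedding}; it is cited as the classical Poisson embedding construction of Br\'emaud and Massouli\'e (\cite{bremaud1996stability}), so there is no in-text argument to compare against. Your plan --- show that $N_{k,t}-\int_0^t\lambda_{k,s}\,\dif s$ is a local martingale by computing the compensator of the integral against the Poisson random measure via Fubini, then localise with the hitting times of $\int_0^\cdot\lambda_{k,s}\,\dif s$ --- is the standard and correct argument, and the coordinatewise remark disposing of the index $k$ is fine.

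Two small points. First, the inequality in the indicator must run the other way. As written (both in the paper's statement, which contains a typo, and in your Fubini display), the inner integral $\int_0^\infty \mathbb{1}_{\{\lambda_{k,s}\le x\}}\,\dif x$ equals $+\infty$ whenever $\lambda_{k,s}<\infty$; the thinning region is $\{x\le\lambda_{k,s}\}$, giving $\int_0^\infty \mathbb{1}_{\{x\le\lambda_{k,s}\}}\,\dif x=\lambda_{k,s}$. You clearly intend this corrected version since your conclusion is right, but the display as typed contradicts it. Second, the digression on left-continuity of $\lambda_{k,s}$ and the $\dif x$-null level set is unnecessary and a little misleading: a predictable process need not be left-continuous, and no level-set argument is required. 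The integrand $(\omega,s,x)\mapsto\mathbb{1}_{\{x\le\lambda_{k,s}(\omega)\}}$ is $\mathcal{P}\otimes\mathcal{B}(\R_+)$-measurable simply because it is the pullback of the Borel set $\{(\ell,x):x\le\ell\}\subset\R_+^2$ under the map $(\omega,s,x)\mapsto(\lambda_{k,s}(\omega),x)$, which is $\mathcal{P}\otimes\mathcal{B}(\R_+)$-measurable by predictability of $\lambda_k$. With these two repairs the proof is complete.
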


Let $(\boldsymbol{N}^T_t)=(N^T_{k,t})$ be a multivariate point process. We may suppose via Proposition~\ref{prop:embedding} and classical change of measures formulas (see~\cite{Bremaudbook}) that there exists a collection $\Prob(\eta,g)$ of probability measures on $(\Omega,\mathcal{F})$ such that $(\boldsymbol{N}^T_t)$ has predictable intensity
\begin{equation*}
    \lambda^T_{k,t}(\eta,g)
    =
    \Phi_k
    \Bigl[ 
    \mu_k \big( \frac{t}{T}, \eta \big)
    +
    \sum_{l=1}^K
    \int_0^t
    g\big( \frac{t}{T} \big) \varphi_{kl}( t-s,\eta) \dif N^T_{l,s}
    \Bigr]
\end{equation*}
under $\Prob( \eta,g)$, where $ g\in C[0,1]$ and $\eta$ lives in a compact subset $\Gamma \subset \R^p$ with non-empty interior. From~\cite{DVJ}, the log-likelihood of $(\eta,g)$ is then
\begin{equation}\label{equ:loglkl_def}
    \mathcal{L}_T(\eta,g)
    =
    \sum_{k=1}^K
    \int_0^T
    \log \lambda^T_{k,s}(\eta,g) \dif N^T_{k,s}
    -
    \int_0^T \lambda^T_{k,s}(\eta,g) \dif s.
\end{equation}
Recall that we work with a single observation $(N^T_t)$ under some $\Prob(\eta^*,g^*)$, with $t \in [0,T]$. We will retain the notation $\eta^*$ for the true parameter in the sequel. For any $q \in \mathbb{N}^\star$, we also write $\lVert x \rVert_q = (\sum \lvert x_k\rvert^q)^{\nicefrac{1}{q}}$ for the $\ell^q$-norm, $\mathcal{M}_q(\R)$ for the set of $q \times q$ matrices, $\rho \colon \mathcal{M}_q(\R) \mapsto [0,\infty)$ for the spectral radius, and denote by $\lVert \cdot \rVert_{L^q}$ the $L^q$ norm $\lVert f \rVert_{L^q}^q= \int_0^\infty \lVert f(x) \rVert^q_q \dif x$. 

\begin{assumption}\label{ass:lipschitz}
    For every $k =1 \hdots K$ and any $(x,y) \in \R^2$,  $\lvert \Phi_k(x) - \Phi_k(y) \rvert \leq \lvert x-y \rvert$.
\end{assumption}

\begin{assumption}\label{ass:stability}
\begin{equation*}
    \sup_{x \in [0,1]}g^*(x)\rho\Big( 
    \int_0^{\infty}
    \varphi_{kl}(s,\eta^*) \dif s
    \Big)<1.
\end{equation*}
\end{assumption}
\begin{remark}
     Assumptions~\ref{ass:lipschitz} and~\ref{ass:stability} are typically expressed as a simultaneous condition involving the Lipschitz constant $\alpha$ of the $\Phi_k$ (see~\cite{bremaud1996stability}). Here we have set $\alpha=1$ without loss of generality so as to elude avoidable identifiability issues. We ask for similar reasons that the amplitude of the kernel be driven by $g$ only, and not by $\varphi$. Specifically, we prohibit in Assumption~\ref{ass:identifiability} certain unidentifiable parametrizations, and require that $\varphi(\cdot,\eta_1)$ and $\varphi(\cdot,\eta)$ may not be proportional when $\eta_1 \neq \eta_2$.
\end{remark}

\begin{assumption}[Identifiability]\label{ass:identifiability}
For any $c>0$ and any $\eta_1 \neq \eta_2$, there exists an interval $I \subset \R_+$ with non-null Lebesgue measure such that, for any $s \in I$,
\begin{equation*}
    \varphi(s,\eta_1)
    \neq
    c \varphi(s,\eta_2).
\end{equation*}
\end{assumption}

\begin{assumption}\label{ass:differentiability}
For any $x \in [0,1]$ and any $t \in \R_+$, the functions $\eta \mapsto \mu(x,\eta)$, and $\eta \mapsto \varphi(t,\eta)$ are thrice continuously differentiable in $\eta$ over $\Gamma$ and admit left/right derivatives at its boundary up to order $3$.
\end{assumption}

\begin{assumption}\label{ass:continuity_in_time}
For any $\eta \in \Gamma$ and any $i=1 \hdots 3$, the function $t\mapsto \partial_{\eta}^i\varphi(\cdot,\eta)$ is piecewise-continuous over $\R_+$ with at most a finite number of discontinuities. 
\end{assumption}

\begin{assumption}\label{ass:integrability}
For any $i = 0 \hdots 3$, any $q \geq 1$,  $ \sup_{\eta \in \Gamma} \lVert \partial^i_{\eta} \varphi(t,\eta) \rVert \in  L^q[0,\infty)$.
\end{assumption}
\begin{remark}
    That Assumption~\ref{ass:integrability} spans every $L^q[0,\infty)$ is technically restrictive but largely innocuous in practice. It suffices that the kernel and its derivatives in $\eta$ be integrable and bounded. This includes: \begin{itemize}
        \item exponential/Gamma kernels $\varphi_{ij} \colon t \mapsto (t+\gamma_{ij})^{k}\exp(-\beta_{ij}t)$,
        \item power law kernels $\varphi_{ij} \colon t \mapsto (t+\gamma_{ij})^{-\beta_{ij} }$,
        \item Gaussian kernels $\varphi_{ij}\colon t \mapsto \exp( -  \beta_{ij} (t-\gamma_{ij})^2)$,
    \end{itemize}
     as notable examples.
\end{remark}

\begin{assumption}\label{ass:continuity}
For any $i =0\hdots 3$, the family of functions $\{ \partial^i_{\eta} \varphi(s,\cdot) \}$ and  $\{ \partial^i_{\eta} \mu(x,\cdot) \}$, respectively indexed on $s \in \R_+$ and $x \in [0,1]$, are uniformly equi-continuous in $\eta$. 
\end{assumption}

Additionally, we forbid the process from ever resting throughout its existence.
\begin{assumption}\label{ass:no_inhibition}
    For any $k = 1 \hdots K$, $\inf_{\eta \in \Gamma} \inf_{x \in [0,1]} \Phi_k[\mu_k(x,\eta)] >0$.
\end{assumption}

Finally, in order to state the weak convergence results of the upcoming Sections, we need to introduce the family of processes $(\lambda^{x,\infty}_{k,t}(\eta,g))$ defined by
\begin{equation}\label{equ:the_stationary_intensities}
    \lambda_{k,t}^{x,\infty}(\eta,g)
    = 
    \Phi_k \Bigl[
    \mu(x,\eta)
    +
    \sum_{l=1}^K \int_{-\infty}^t
    g(x)\varphi_{kl}(t-s,\eta) \dif N^{x,\infty}_{l,s}
    \Bigr],
    \hspace{0.1cm} t \in \R,
\end{equation}
where the $(\boldsymbol{N}^{x,\infty}_t)$ are stationary Hawkes processes embedded into the same Poisson base $(\pi_k)$ as $(\boldsymbol{N}^T_t)$, each with respective intensity $
    (\lambda^{x,\infty}_t(\eta^*,g^*))
$ under $\Prob(\eta^*,g^*)$. We refer to Theorem 7 in~\cite{bremaud1996stability} for a proof of the existence of the $(N^{x,\infty}_{k,t})$.

 \subsection{Asymptotic normality of the \textsc{mle}}\label{section:parametric} Suppose for the rest of this Subsection that the reproduction rate $g$ is well specified, meaning that one has access to a collection $G_\Xi=\{ g(\cdot,\varpi) \vert \varpi \in \Xi \}$ of $C[0,1]$ indexed on some compact $ \Xi \subset \R^{d+1}$ with non-empty interior, containing the true reproduction rate $g^*$.  That is, there is some  $\varpi^* \in \Xi$ such that
 \begin{equation}\label{equ:well_param}
     g^*=g(\cdot,\varpi^*),
 \end{equation}
  and the true intensity $\lambda^T(\eta^*,g^*)$ writes $\lambda^T_t(\eta^*,g(\cdot,\varpi^*))$. It is then convenient to rephrase the model in a purely parametric fashion, to which end we introduce the parameter space
 \begin{equation*}
      \Theta
     =
     \Gamma \times \Xi = \{ \vartheta=(\eta,\varpi) \lvert \eta \in \Gamma, \varpi \in \Xi\},
 \end{equation*}
and the parametric \textsc{mle}s
\begin{align}\label{equ:parametric_mle}
    \hat{\vartheta}_T= (\hat{\eta}_T,\hat{\varpi}_T)
    &=
    \argmax_{\vartheta =(\eta,\varpi)\in \Theta}
    \mathcal{L}_T(\eta,g(\cdot,\varpi))).
\end{align}
For any $\vartheta=(\eta,\varpi) \in \Theta$, we then commit the slight abuse of notation 
\begin{equation}\label{equ:abuse}
    (\lambda^T_{k,t}(\vartheta))=(\lambda^T_{k,t}(\eta,g(\cdot,\varpi))),
\end{equation} and, likewise, write $\lambda^{x,\infty}_{k,t}(\vartheta)=\lambda_{k,t}^{x,\infty}(\eta,g(\cdot,\varpi))$ for the stationary intensities~\eqref{equ:the_stationary_intensities}. Where there is no ambiguity, we also use the notation $\Prob(\vartheta^*)=\Prob(\eta^*,g(\cdot,\varpi^*))$. The parametrized reproduction rates $g(\cdot,\varpi)$ should naturally satisfy the same regularity conditions as $\mu$ and $\varphi$ as summarized in the following Assumption.

\begin{assumption}
    The functions $g \colon x,\varpi \mapsto g(x,\varpi)$ are thrice differentiable in $\varpi$ over $\Xi$, and, for any $i=0 \hdots 3$, $\partial^i_{\varpi} g$ is uniformly equi-continuous in $\varpi$ over $\Xi$ and continuous in $x$ over $[0,1]$.  
\end{assumption}

Still with notation~\eqref{equ:abuse}, we introduce the asymptotic information $I(\vartheta)$ as the matrix
\begin{equation}\label{equ:asymptotic_information}
    I(\vartheta) = \sum_{k=1}^K\int_0^1 
     I^{(k)}(\vartheta,x)
        \dif x,
\end{equation}
where 
\begin{equation*}
   I^{(k)}_{ij}(x,\vartheta)= \mathbb{E}\left[ \frac{
        \partial_{\vartheta_i}
        \lambda^{x,\infty}_{k,0}
        (\vartheta)
        \partial_{\vartheta_j
        }\lambda^{x,\infty}_{k,0}
        (\vartheta)
        }{\lambda^{x,\infty}_{k,0}(\vartheta)}\right],
\end{equation*}
upon which is imposed the following non-degeneracy condition. 
\begin{assumption}\label{ass:fisher_is_non_degenerate}
There are some $x \in [0,1]$ and $k \in \{1 \hdots K \}$ such that $I^{(k)}(x,\vartheta)$ is definite positive.
\end{assumption}
\begin{remark} Assumption~\ref{ass:fisher_is_non_degenerate} is standard in the literature, in the sense that it simplifies into~\cite[Assumption \textsc{b}6]{OgataMLE} when $\mu\equiv 1,g \equiv 1$. While minimal general conditions on $(\mu,\varphi)$ guaranteeing the Assumption are not known, some sufficient identifiability criteria can be enunciated for specific choices of $\varphi$. See for instance~\cite{Martinez} in the exponential case.
\end{remark}
We are now ready to state our \textsc{clt}.
 \begin{theorem}[A central limit Theorem]\label{thm:TCL}
     Work under Assumptions~\ref{ass:lipschitz} to~\ref{ass:fisher_is_non_degenerate}. Grant also~\eqref{equ:well_param} and write $\vartheta^*=(\eta^*,\varpi^*)$. Then, the rescaled \textsc{mle} $\sqrt{T}(\hat{\vartheta}_T-\vartheta^*)$ converges in law under $\Prob(\eta^*,g(\cdot,\varpi^*))$ as $T \to \infty$ towards the distribution of
     \begin{equation*}
         \argmin_{h \in H} \lVert I(\vartheta^*)^{\frac{1}{2}} X -  I(\vartheta^*)^{\frac{1}{2}} h \rVert_2
     \end{equation*}
     with $X \sim \mathcal{N}(0,I(\vartheta^*)^{-1})$, $I(\vartheta^*)$ defined in~\eqref{equ:asymptotic_information}, and $H = \lim_{T \to \infty} \sqrt{T}(\Theta - \vartheta^* )$\footnote{$H$ may be appropriately defined in terms of Painlevé-Kuratowski convergence, see~\cite{Geyer}.}.
 \end{theorem}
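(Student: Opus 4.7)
The plan is to carry out the classical MLE asymptotic program (score CLT, Hessian LLN, consistency, third-derivative control) adapted to the locally stationary setting, and then to invoke Geyer's argmin theorem to accommodate the fact that, under Assumption~\ref{ass:no_inhibition}, $\vartheta^*$ may lie on the boundary of $\Theta$. I first differentiate $\mathcal{L}_T$ up to order $3$ using Assumption~\ref{ass:differentiability}, and exploit the canonical decomposition $\dif N^T_{k,s} = \lambda^T_{k,s}(\vartheta^*)\dif s + \dif M^T_{k,s}$ under $\Prob(\vartheta^*)$ to express the score at $\vartheta^*$ as the martingale
$$
\partial_\vartheta \mathcal{L}_T(\vartheta^*)
=
\sum_{k=1}^K \int_0^T \frac{\partial_\vartheta \lambda^T_{k,s}(\vartheta^*)}{\lambda^T_{k,s}(\vartheta^*)}\dif M^T_{k,s},
$$
so that its predictable quadratic variation is a Fisher-type integrated quantity. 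Analogous representations for $\partial^2_\vartheta \mathcal{L}_T$ and $\partial^3_\vartheta \mathcal{L}_T$ separate a bounded-variation term from a centered martingale remainder.

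The heart of the argument is a locally-stationary ergodic identity of the form
$$
\frac{1}{T}\int_0^T F_k\bigl(\lambda^T_{k,\cdot},s\bigr)\dif s
\xrightarrow{\Prob(\vartheta^*)}
\int_0^1 \mathbb{E}\bigl[F_k(\lambda^{x,\infty}_{k,\cdot},0)\bigr]\dif x,
$$
valid uniformly in $\vartheta \in \Theta$ for the functionals $F_k$ arising in the above derivatives. I would establish this by coupling, on a window of length $o(T)$ around each time $s = Tx$, the non-stationary intensity $(\lambda^T_{k,\cdot})$ with the stationary reference $(\lambda^{x,\infty}_{k,\cdot})$ of~\eqref{equ:the_stationary_intensities}, both embedded into the shared Poisson base $(\pi_k)$ of Proposition~\ref{prop:embedding}. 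Combining the coupling with the continuity in $x$ of $\mu(\cdot,\eta^*)$ and $g^*$, together with a Riemann-sum argument, yields the displayed limit; uniformity in $\vartheta$ follows from the equi-continuity Assumption~\ref{ass:continuity} and the $L^q$-integrability of Assumption~\ref{ass:integrability}. It is at this step that the locally-stationary variant of the \textsc{jr} decomposition mentioned in the introduction shortens numerous estimates relative to~\cite{kwan2023asymptotic}.

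Applied to the quadratic part of the score integrand, this identity gives $T^{-1}\langle \partial_\vartheta \mathcal{L}_T(\vartheta^*)\rangle_T \xrightarrow{\Prob} I(\vartheta^*)$, and Rebolledo's martingale CLT, whose Lindeberg condition follows again from Assumption~\ref{ass:integrability}, delivers $T^{-1/2}\partial_\vartheta \mathcal{L}_T(\vartheta^*) \Rightarrow \mathcal{N}(0,I(\vartheta^*))$. Applied to the Hessian integrand it gives $-T^{-1}\partial^2_\vartheta \mathcal{L}_T(\vartheta^*) \xrightarrow{\Prob} I(\vartheta^*)$, while its supremum-in-$\vartheta$ version bounds $T^{-1}\sup_{\vartheta \in \Theta} \lVert \partial^3_\vartheta \mathcal{L}_T(\vartheta)\rVert$ in probability. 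Consistency $\hat{\vartheta}_T \to \vartheta^*$ then follows from uniform convergence of $T^{-1}\mathcal{L}_T$ to a deterministic limit uniquely maximised at $\vartheta^*$, uniqueness being granted by Assumptions~\ref{ass:identifiability} and~\ref{ass:fisher_is_non_degenerate} together with~\eqref{equ:well_param}.

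Finally, setting $Q_T(h) = \mathcal{L}_T(\vartheta^* + h/\sqrt{T}) - \mathcal{L}_T(\vartheta^*)$, a second-order Taylor expansion combined with the above yields
$$
Q_T(h) \Rightarrow h^\top Z - \tfrac{1}{2} h^\top I(\vartheta^*) h, \qquad Z \sim \mathcal{N}(0, I(\vartheta^*)),
$$
uniformly on compacts in $h$. Since $\sqrt{T}(\Theta - \vartheta^*)$ converges to the tangent cone $H$ in the Painlevé-Kuratowski sense, Geyer's argmin theorem~\cite{Geyer} gives $\sqrt{T}(\hat{\vartheta}_T - \vartheta^*) \Rightarrow \arg\max_{h \in H}\{h^\top Z - \tfrac{1}{2} h^\top I(\vartheta^*) h\}$, which rewrites as the projection form of Theorem~\ref{thm:TCL} upon setting $X = I(\vartheta^*)^{-1}Z \sim \mathcal{N}(0, I(\vartheta^*)^{-1})$ and completing the square. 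The chief obstacle is the ergodic identity of the second paragraph: unlike in the stationary case, no direct ergodic theorem in $s$ is available, and the coupling-plus-Riemann argument must be executed uniformly in $\vartheta$ and with enough uniform integrability to absorb every functional $F_k$ generated by differentiating $\mathcal{L}_T$.
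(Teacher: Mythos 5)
Your proposal reproduces, at the level of ideas, exactly the path the paper takes: verify moment bounds and a locally stationary ergodic theorem for the functionals of $(\lambda^T_t)$ appearing in the first three derivatives of $\mathcal{L}_T$, via the shared-Poisson-base coupling of $(\lambda^T_t)$ with the stationary references $(\lambda^{x,\infty}_t)$ on windows $[(i-1)\Delta_T,i\Delta_T]$ with $\Delta_T=o(T)$, followed by a Riemann-sum argument; then a martingale CLT for the score, consistency by the uniform LLN and well-separated maximum, and projection onto the tangent cone $H$ for the boundary case. The only formal difference is at the last step: you invoke Geyer's argmin theorem directly from the local quadratic process $Q_T(h)$, whereas the paper defers to Theorem 3.11 of Clinet--Yoshida together with the Andrews assumptions on constrained M-estimation (noting the cone geometry is a positive orthant, so no approximating-cone machinery is needed) — but these two routes are the same tangent-cone projection argument, and the paper's own footnote cites Geyer for the Painlevé--Kuratowski limit of $\sqrt{T}(\Theta-\vartheta^*)$. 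Your description of the ergodic step is lighter than the paper's (which proves it through a chain of lemmata: a renewal-equation/resolvent domination in the spirit of Jaisson--Rosenbaum, recursive BDG moment bounds, equicontinuity of $x\mapsto\lambda^{x,\infty}_0$ via the martingale of the symmetric-difference counting measure, and a three-step approximation scheme), but the plan you outline is the same.
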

 \begin{remark}\label{remark:actual_central_limit_th}
    The \textsc{clt}~\ref{thm:TCL} describes the distribution of the \textsc{mle} as the one of a Gaussian variable projected on $H$ along the geodesics of the Fisher-Rao metric. We have kept the statement of Theorem~\ref{thm:TCL} in such form so as to include the singularities discussed in our Introduction. When $\eta\in \mathring{ \Gamma}$, the $\argmin$ is attained at $X$, and one recovers a more conventional \textsc{clt} as detailed in Corollary~\ref{coro:true_clt}. 
 \end{remark}
 \begin{corollary}\label{coro:true_clt}
     Work under the setting of Theorem~\ref{thm:TCL}, and suppose that $\eta \in \mathring{\Gamma}$. Then,
     \begin{equation*}
         \sqrt{T}(\hat{\vartheta}_T - \vartheta^*)
         \to \mathcal{N}(0,I(\vartheta^*)^{-1})
     \end{equation*}     
     in law as $T \to \infty$ under $\Prob(\vartheta^*)$.
 \end{corollary}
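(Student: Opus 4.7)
The plan is to deduce Corollary~\ref{coro:true_clt} as a direct specialization of Theorem~\ref{thm:TCL} once one identifies the limiting tangent cone $H$ in the interior case. Concretely, I will argue that the hypothesis $\eta^* \in \mathring{\Gamma}$ (together with the implicit $\varpi^* \in \mathring{\Xi}$, which must be part of the interior assumption since otherwise the statement could not hold) forces $H = \R^{p+d+1}$, after which the constrained quadratic minimization in the \textsc{clt} collapses to an unconstrained one whose minimizer is simply $X$.

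First I would fix an arbitrary $h \in \R^{p+d+1}$ and observe that for $T$ large enough, $\vartheta^* + h/\sqrt{T}$ lies in $\mathring{\Theta}$, because an open ball around $\vartheta^*$ is contained in $\Theta$. Hence $h \in \sqrt{T}(\Theta - \vartheta^*)$ for every sufficiently large $T$, so $h$ belongs to the Painlevé–Kuratowski inner limit. Since the inner limit is always contained in the outer limit, and the outer limit is itself contained in $\R^{p+d+1}$, both limits coincide with $\R^{p+d+1}$, and $H = \R^{p+d+1}$.

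Next I would invoke Assumption~\ref{ass:fisher_is_non_degenerate} together with the continuity of $\vartheta \mapsto I^{(k)}(x,\vartheta)$ and $x \mapsto I^{(k)}(x,\vartheta)$ — inherited from the differentiability Assumptions on $\mu, \varphi, g$ — to conclude that $I(\vartheta^*)$ is positive definite. Indeed, positive definiteness at one point $(x_0, k_0)$ extends to a neighborhood of $x_0$ by continuity, and the integral over $[0,1]$ of a positive semi-definite integrand that is positive definite on a set of positive measure is itself positive definite. Consequently, $I(\vartheta^*)^{1/2}$ is invertible, and the map $h \mapsto \lVert I(\vartheta^*)^{1/2}(X - h) \rVert_2$ is a strictly convex function of $h$ on $\R^{p+d+1}$ with a unique global minimizer at $h = X$.

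Putting these two observations into the statement of Theorem~\ref{thm:TCL}, the limiting distribution of $\sqrt{T}(\hat{\vartheta}_T - \vartheta^*)$ becomes simply the law of $X \sim \mathcal{N}(0, I(\vartheta^*)^{-1})$, which is precisely the conclusion. I anticipate no genuine obstacle: the only subtle point is the soft verification that the tangent cone at an interior point is the full ambient space, and this is a purely geometric statement independent of the point process machinery developed in the main theorem.
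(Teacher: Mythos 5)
Your proof is correct, and it is the intended deduction sketched in Remark~\ref{remark:actual_central_limit_th}, but it is not the route the paper actually takes. The paper does not derive Corollary~\ref{coro:true_clt} from Theorem~\ref{thm:TCL} at all: it states that the Corollary ``deduces from'' Theorem~3.11 of~\cite{ClinetYoshida} once Propositions~\ref{prop:moments} and~\ref{prop:ergodicity} are in place, and then re-traces that classical argument from scratch --- the martingale structure of the score, the Lindeberg-Feller martingale CLT applied to $(\textswab{M}^T_t)$ giving~\eqref{equ:score_is_normal}, the Taylor expansion and inversion of the observed information in~\eqref{equ:inversion}, uniform convergence of $I_T(\bar\vartheta_T)$ to $I(\vartheta^*)$ via Proposition~\ref{prop:ergodicity}, and Slutsky. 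Your argument instead treats Theorem~\ref{thm:TCL} as a black box and specialises it: $\vartheta^* \in \mathring\Theta$ forces the Painlev\'e--Kuratowski tangent set $H$ to be all of $\R^{p+d+1}$, and positive definiteness of $I(\vartheta^*)$ (which you correctly re-verify follows from Assumption~\ref{ass:fisher_is_non_degenerate} together with positive semi-definiteness and continuity of $x\mapsto I^{(k)}(x,\vartheta^*)$) makes $h\mapsto\lVert I(\vartheta^*)^{1/2}(X-h)\rVert_2$ strictly convex with unique minimiser $X$. This is cleaner as a proof of a corollary, while the paper's re-derivation serves a secondary purpose --- it sets up the score-expansion machinery re-used in the proof of Theorem~\ref{tm:LRT_for_g_constant_or_not}. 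You also correctly flag that the Corollary's hypothesis should really be $\vartheta^*\in\mathring\Theta$, i.e.\ $\varpi^*\in\mathring\Xi$ as well as $\eta^*\in\mathring\Gamma$, a small imprecision in the paper's statement that your argument makes explicit.
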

 \begin{remark}
     Since we have made in Assumption~\ref{ass:continuity} the supposition that $g$ is uniformly equi-continuous in $\varpi$, the random function $g(\cdot,\hat{\varpi})$ is a consistent estimator of $g^* =g(\cdot,\varpi^*)$  under the setting of the present Section, in the sense of the uniform norm.
 \end{remark}
\subsection{Testing for time-invariance of the kernel}\label{section:nonparametric} We intend to test
\begin{equation}\label{equ:nullhyp}
    \mathcal{H}_0: \{ \textup{The reproduction rate }g \textup{  is constant} \},
\end{equation}
against the general alternative \begin{equation*}
    \mathcal{H}_1:\{ \textup{It is not}\}.
\end{equation*}
In order to stress our test's robustness to misspecification of $x \mapsto g(x,\cdot)$, we hand over the formalism of the preceding Subsection and place the next results in the semi-nonparametric setting. The process $(\boldsymbol{N}^T_t)$ still has baseline $\mu(\cdot,\eta^*)$ and kernel $\varphi(\cdot,\eta^*)$ with $\eta^* \in \Gamma \subset \R^p$, while the reproduction rate $g^*$ is left free to roam in the infinite-dimensional space $C[0,1]$. We rely on a polynomial $g^{\mathfrak{d}}(\cdot,\varpi)$ defined over the Bernstein basis
$
    B_{k,\mathfrak{d}} (t)=  {\mathfrak{d}\choose k} t^k (1-t)^{\mathfrak{d}-k}
$ of $\R_{\mathfrak{d}}[X]$. For any $x \in [0,1]$,
\begin{align}\label{equ:bernstein_basis}
g^{\mathfrak{d}}(x,\varpi)=\sum_{i=0}^{\mathfrak{d}} \varpi_i B_{i,\mathfrak{d}}(x),
\end{align}
where the weights $\varpi$ live in the non-empty interior of some compact subspace $\Xi_\mathfrak{d}$ of $\R^{\mathfrak{d}+1}_+$, consistently with the notation of Subsection~\ref{section:parametric}. For any $\mathfrak{d} \in \mathbb{N}^\star$, the basis $(B_{i,\mathfrak{d}})$ enjoys the partition property $\sum_{i=0}^{\mathfrak{d}} B_{i,\mathfrak{d}}(x)=1$, whence hypothesis~\eqref{equ:nullhyp} rephrases as
\begin{equation}\label{equ:it_is_parametric}
    \mathcal{H}_0 =\{(\eta,\varpi) \vert \varpi \in \Xi_{\mathfrak{d}}^0 \} :=\{ (\eta,\varpi) \lvert \varpi_{0} = \cdots = \varpi_{\mathfrak{d}} \}.
\end{equation}
The null hypothesis is therefore a finite-dimensional parametric sub-space of the general model, and a fitting test statistic is the likelihood ratio as defined in~\ref{def:LRS}.

\begin{definition}\label{def:LRS}
The likelihood ratio statistic (\textsc{lrs}) for testing~\eqref{equ:it_is_parametric} is defined by
    \begin{equation*}
    \Lambda^\mathfrak{d}_T
    =
    2 \{ 
    \sup_{ \eta \in \Gamma, \varpi \in \Xi_{\mathfrak{d}}^0 } \mathcal{L}_T(\eta,g(\cdot,\varpi))
    -
    \sup_{ \eta \in \Gamma,C>0  } 
    \mathcal{L}_T(\eta,C))
    \}.
\end{equation*}
\end{definition}

As mentioned in Section~\ref{section:introduction} and made clear in~\eqref{equ:it_is_parametric}, the law of $\Lambda^\mathfrak{d}_T$ under $\mathcal{H}_0$ is a parametric matter. Theorem~\ref{tm:LRT_for_g_constant_or_not} is then within the range of consequences of Theorem~\ref{thm:TCL} and its Corollary~\ref{coro:true_clt}.

\begin{theorem}\label{tm:LRT_for_g_constant_or_not}
    Under Assumptions~\ref{ass:lipschitz} to~\ref{ass:differentiability}, for any $\gamma \in \mathring{\Gamma}$ and any $C>0$,
    \begin{equation*}
        \Lambda^{\mathfrak{d}}_T \xrightarrow[T \to \infty]{\mathcal{L}(\Prob(\eta,C))}
        \chi^2(\mathfrak{d})
    \end{equation*}
\end{theorem}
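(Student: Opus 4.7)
Under $\mathcal{H}_0$ the true reproduction rate is $g^* \equiv C$ for some $C>0$, and within the Bernstein parametrisation~\eqref{equ:bernstein_basis} this corresponds to the parameter $\vartheta^*_0 = (\eta^*, C, \ldots, C)$. Since $\eta^* \in \mathring{\Gamma}$ and $C > 0$, this point lies in the interior of $\Theta = \Gamma \times \Xi_\mathfrak{d}$, and the null constraint $\varpi_0 = \cdots = \varpi_\mathfrak{d}$ cuts out a linear subspace $L_0 \subset \R^{p+\mathfrak{d}+1}$ of dimension $p+1$ of which $\vartheta^*_0$ is a relative interior point. The plan is to reduce the statement to a classical Wilks-type argument: both the unrestricted \textsc{mle} $\hat{\vartheta}_T$ and its null-restricted counterpart $\tilde{\vartheta}_T$ fall within the range of Corollary~\ref{coro:true_clt}, and $\Lambda^\mathfrak{d}_T$ expands as a quadratic form in the rescaled score $T^{-1/2} \nabla \mathcal{L}_T(\vartheta^*_0)$.

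Before invoking Corollary~\ref{coro:true_clt}, I would verify the non-degeneracy of $I(\vartheta^*_0)$ required by Assumption~\ref{ass:fisher_is_non_degenerate}, as it is not automatic from the stated hypotheses. Since $g(x,\varpi) = \sum_{i=0}^\mathfrak{d} \varpi_i B_{i,\mathfrak{d}}(x)$ is linear in $\varpi$, each derivative $\partial_{\varpi_i} \lambda^{x,\infty}_k(\vartheta^*_0)$ factors as $B_{i,\mathfrak{d}}(x)$ times a quantity independent of $i$ and, thanks to $g^*$ being constant, independent of $x$. The $\varpi$-block of $I^{(k)}(x,\vartheta^*_0)$ is therefore a positive scalar multiple of $B(x) B(x)^{\trans}$, and upon integration against $x$ it becomes a positive multiple of the Gram matrix $(\int_0^1 B_{i,\mathfrak{d}} B_{j,\mathfrak{d}})_{ij}$, which is classically positive definite. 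Together with the identifiability hypothesis on $\eta$ (Assumption~\ref{ass:identifiability}) and the usual injectivity of the score in $\eta$, this suffices for $I(\vartheta^*_0)$ to be positive definite.

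I then apply Corollary~\ref{coro:true_clt} to $\hat{\vartheta}_T$ and, separately, to $\tilde{\vartheta}_T$ (the latter is the \textsc{mle} of a smaller regular parametric model that, under $\mathcal{H}_0$, is a stationary Hawkes process with scalar reproduction rate). Substituting the resulting linearizations of $\sqrt{T}(\hat{\vartheta}_T - \vartheta^*_0)$ and $\sqrt{T}(\tilde{\vartheta}_T - \vartheta^*_0)$ into the second-order Taylor expansion of $\mathcal{L}_T$ around $\vartheta^*_0$ and subtracting, standard algebra yields
\begin{equation*}
    \Lambda^{\mathfrak{d}}_T \;=\; \xi_T^{\trans} (\mathrm{Id} - Q_{L_0}) \xi_T + o_{\Prob}(1), \qquad \xi_T := I(\vartheta^*_0)^{-1/2} T^{-1/2} \nabla \mathcal{L}_T(\vartheta^*_0),
\end{equation*}
where $Q_{L_0}$ is the Euclidean orthogonal projector onto $I(\vartheta^*_0)^{1/2} L_0$. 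Since $\xi_T \xrightarrow{d} \mathcal{N}(0, \mathrm{Id})$ on $\R^{p+\mathfrak{d}+1}$ and $\mathrm{Id} - Q_{L_0}$ is an orthogonal projector of rank $(p+\mathfrak{d}+1) - (p+1) = \mathfrak{d}$, Cochran's theorem delivers the announced convergence $\Lambda^{\mathfrak{d}}_T \xrightarrow{d} \chi^2(\mathfrak{d})$.

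The algebraic skeleton above is classical, and the genuine obstacle is analytical: one must justify the uniform quadratic expansion of $\mathcal{L}_T$ and the \textsc{clt} for the rescaled score, both of which rely on ergodicity of the relevant additive functionals of $(\lambda^T_t)$. These are precisely the ingredients supplied by the embedding-based arguments already developed in the proof of Theorem~\ref{thm:TCL}, so the work is largely a matter of importing those intermediate results at the specific parameter $\vartheta^*_0$; the only genuinely new piece is the Fisher non-degeneracy computation for the Bernstein block described above.
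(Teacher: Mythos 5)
Your argument follows the same Wilks-type strategy as the paper: reduce $\Lambda^{\mathfrak{d}}_T$ to a quadratic form in the rescaled score, apply the parametric \textsc{clt} of Corollary~\ref{coro:true_clt}, and extract the $\chi^2(\mathfrak{d})$ limit. The bookkeeping is different though not deeper: the paper makes the null constraint axis-aligned by a linear change of variables $\varpi = G(\nu,\delta^\top)^\top$ and then works out the degrees of freedom via the Schur complement of $I(\vartheta^*)$, whereas you keep the Bernstein coordinates and express $\Lambda^\mathfrak{d}_T$ through the projector $\mathrm{Id}-Q_{L_0}$ onto the orthogonal complement of $I(\vartheta^*_0)^{1/2}L_0$, then invoke Cochran. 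Both are standard and yield the same count $\mathfrak{d} = (p+\mathfrak{d}+1)-(p+1)$.

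The one place where you take on genuinely new work --- the Fisher non-degeneracy of $I(\vartheta^*_0)$ --- is also the weakest part, and it is worth being precise about what goes wrong. First, you claim that when $g^*$ is constant the factor multiplying $B_{i,\mathfrak{d}}(x)$ in $\partial_{\varpi_i}\lambda^{x,\infty}_{k,0}$ is independent of $x$; this is false unless $\mu$ is also constant in $x$, since the stationary processes $N^{x,\infty}$ differ across $x$ through the baseline. The conclusion can be repaired: the $\varpi$-block of $I(\vartheta^*_0)$ is $\int_0^1 c(x) B(x)B(x)^{\top}\dif x$ for a positive weight $c(x)$, and a polynomial $v^\top B(\cdot)$ of degree $\le\mathfrak{d}$ vanishing $c$-a.e.\ must vanish identically, so the block is positive definite regardless. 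Second, and more seriously, positive definiteness of the $\varpi$-block together with identifiability in $\eta$ does not imply positive definiteness of the full matrix $I(\vartheta^*_0)$: you have not ruled out degeneracy arising from the cross-block terms, i.e., a direction mixing $\eta$ and $\varpi$ perturbations along which the score is flat. This is exactly the kind of overparametrization the paper flags (note the normalization $\varpi_0=1$ imposed in their empirical application precisely to kill such a degeneracy), and the paper disposes of it by assuming Assumption~\ref{ass:fisher_is_non_degenerate} outright. If you want to avoid that assumption, you would need an explicit argument that no nontrivial linear combination of $\partial_\eta\lambda^{x,\infty}$ and $\partial_\varpi\lambda^{x,\infty}$ vanishes a.s.\ for a.e.\ $x$; your current text asserts this rather than proves it. Everything else in your proposal --- interiority of $\vartheta^*_0$, the second-order expansion, and the Cochran step --- is sound and matches the paper's intent.
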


In the specific case where $g^* \in \R_{\mathfrak{d}}[X]$, the distribution of $\Lambda^{\mathfrak{d}}_T$ under the alternative files again under the parametric setting. The consistency of the test then deduces from that of the \textsc{mle}. In the general case where $g^* \in C[0,1]$, one instead expects a bias to subsist in the estimation of $\hat{g}^\mathfrak{d}_T$, as we have not allowed $\mathfrak{d}$ to increase with $T$. However, the consistency of the \textsc{mle} is not necessary for the consistency of the test. The \textsc{mle} over $\Gamma \times \Xi^{\mathfrak{d}}$ needs only be marginally better than the \textsc{mle}  over  $\Gamma \times \Xi^{\mathfrak{d}}_0 $ -- sufficiently so that a gap appears within $\frac{1}{T} \Lambda_T^{\mathfrak{d}}$, resulting in a diverging \textsc{lrs}. We find in Proposition~\ref{Proposition:robust} that, provided $\mathfrak{d}$ is above a certain threshold depending only on $g^*$ but not on $T$, the likelihood ratio test remains consistent for any non-constant $g \in C[0,1]$. We need only slightly strengthen Assumption~\ref{ass:no_inhibition}.

\begin{assumption}[Convexity]\label{ass:convexity}
    There is some $\epsilon>0$ such that, for any $k = 1 \hdots p$ and $x \geq 0$, $\Phi_k'(x)>\epsilon$.
\end{assumption}

\begin{proposition}\label{Proposition:robust}
    Work under Assumptions~\ref{ass:lipschitz} to~\ref{ass:identifiability}. Let $g^*$ be a non-constant function from $[0,1]$ to $\R_+$. Then, there exists $d^*>0$ depending only on $\eta^*$ and $g^*$ such that, for any $\mathfrak{d}>d^*$, $\Lambda_T^{\mathfrak{d}} \to \infty$ in probability as $T \to \infty$ under $\Prob(\eta^*,g^*)$.
\end{proposition}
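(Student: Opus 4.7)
The plan is to show that $\tfrac{1}{T}\Lambda_T^{\mathfrak{d}}$ stays bounded below in probability by a strictly positive constant for every $\mathfrak{d}>d^*$, whence $\Lambda_T^{\mathfrak{d}} \to \infty$. Introduce the contrast
\begin{equation*}
\ell(\eta,g) := \sum_{k=1}^K \int_0^1 \E{ \lambda^{x,\infty}_{k,0}(\eta^*,g^*) \log \lambda^{x,\infty}_{k,0}(\eta,g) - \lambda^{x,\infty}_{k,0}(\eta,g) } \dif x.
\end{equation*}
By compensating $\dif N^T_{k,s} = \dif M^T_{k,s} + \lambda^T_{k,s}(\eta^*,g^*)\dif s$ and transposing the embedding-coupling scheme of~\cite{ClinetYoshida,kwan2023asymptotic} to the locally stationary intensities~\eqref{equ:the_stationary_intensities}---precisely the ergodic ingredient that supports Theorem~\ref{thm:TCL}---one obtains $\tfrac{1}{T}\mathcal{L}_T(\eta,g^\mathfrak{d}(\cdot,\varpi)) \to \ell(\eta,g^\mathfrak{d}(\cdot,\varpi))$ in probability, uniformly on any compact subset of $\Gamma \times \Xi_\mathfrak{d}$. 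Since $\lambda^{x,\infty}_{k,0}(\eta,g)$ depends on $g$ only through the scalar $g(x)$, the contrast $\ell$ extends naturally to $\Gamma \times C[0,1]$ under the stability range of Assumption~\ref{ass:stability}.

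The Gibbs inequality $u\log(u/v) - u + v \geq 0$ (equality iff $u=v$) yields $\ell(\eta,g) \leq \ell(\eta^*,g^*)$, with equality iff $\lambda^{x,\infty}_{k,0}(\eta,g) = \lambda^{x,\infty}_{k,0}(\eta^*,g^*)$ almost surely for every $k$ and $x \in [0,1]$; Assumption~\ref{ass:identifiability} then forces $\eta=\eta^*$ and $g(x)=g^*(x)$ for every $x \in [0,1]$. Because $g^*$ is non-constant, no constant function can realise this equality, hence $\ell(\eta,C) < \ell(\eta^*,g^*)$ for all $(\eta,C)$. Coercivity as $C \to 0^+$ or $C \to \infty$ together with compactness of $\Gamma$ confine the maximizer of $\ell(\eta,C)$ to a compact set, yielding $\ell_0 := \sup_{\eta,C}\ell(\eta,C) < \ell(\eta^*,g^*)$; set $\delta := \ell(\eta^*,g^*) - \ell_0 > 0$.

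The final step uses Bernstein approximation to close this gap within $\Theta_\mathfrak{d}$. For $B_\mathfrak{d} g^*(x) := \sum_{k=0}^{\mathfrak{d}} g^*(k/\mathfrak{d}) B_{k,\mathfrak{d}}(x)$, the coefficients lie in $\Xi_\mathfrak{d}$ (taken sufficiently large) by positivity of $g^*$, and $\|B_\mathfrak{d} g^*-g^*\|_\infty \to 0$ by Bernstein--Weierstrass. The map $g \mapsto \ell(\eta^*,g)$ is continuous for $\|\cdot\|_\infty$: Assumption~\ref{ass:lipschitz} gives Lipschitz dependence of $\lambda^{x,\infty}_{k,0}(\eta^*,\cdot)$ on the scalar $g(x)$, while Assumption~\ref{ass:integrability} and Assumption~\ref{ass:convexity} (the latter bounding $\log \lambda$ from below) supply the integrable envelope needed to pass the limit through the expectation. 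Thus there exists $d^* = d^*(\eta^*,g^*)$ with $\ell(\eta^*,B_\mathfrak{d} g^*) > \ell_0 + \delta/2$ for every $\mathfrak{d} > d^*$. Evaluating the alternative supremum at $\vartheta_\mathfrak{d} := (\eta^*, B_\mathfrak{d} g^*)$ and combining with the uniform \textsc{lln} on the relevant compact null set gives
\begin{equation*}
\tfrac{1}{T}\Lambda_T^{\mathfrak{d}} \geq \tfrac{2}{T}\bigl[\mathcal{L}_T(\vartheta_\mathfrak{d}) - \sup_{\eta,C>0}\mathcal{L}_T(\eta,C)\bigr] \xrightarrow[T\to\infty]{\Prob(\eta^*,g^*)} 2\bigl[\ell(\eta^*,B_\mathfrak{d} g^*) - \ell_0\bigr] \geq \delta > 0,
\end{equation*}
so $\Lambda_T^\mathfrak{d} \to \infty$ in probability.

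The main obstacle is the uniform ergodic \textsc{lln} of the first step, which however is exactly the machinery supporting Theorem~\ref{thm:TCL} via the \textsc{kcd}-style coupling adapted to~\eqref{equ:the_stationary_intensities}; the genuine novelty---and the ``convexity argument'' alluded to in Section~\ref{section:introduction}---is the combination of Gibbs' inequality with Bernstein approximation, which translates the infinite-dimensional obstruction $g^* \in C[0,1]$ into a finite-dimensional, $T$-independent bias that can be beaten by the degree $\mathfrak{d}$.
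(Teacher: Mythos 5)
Your proposal is correct and takes essentially the same route as the paper. The paper splits the argument into two lemmas: a Bernstein-approximation bound (Lemma~\ref{Lemma2}, showing the polynomial model can get within $\epsilon T$ of the true log-likelihood for $\mathfrak{d}$ large, independently of $T$) and a strict gap for the constant model (Lemma~\ref{Lemma1}); you fold these into a single pass through a limit contrast $\ell$, but the ingredients — the ergodic LLN from Proposition~\ref{prop:ergodicity}, the convexity of $x \mapsto \log x - (x-1)$, identifiability of $\varphi$ plus non-constancy of $g^*$ to rule out equality at any constant $C$, equicontinuity in $x$ to pass to a supremum over a compact set, and Bernstein--Weierstrass to beat the resulting $T$-independent gap — are identical. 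The one substantive difference is that you invoke the qualitative Gibbs inequality, while the paper bootstraps the sharper quadratic bound $(x-1)-\log x \geq \tfrac{1}{2}(x-1)^2(x+1)^{-1}$ together with reverse Hölder; the quantitative version is not needed for Proposition~\ref{Proposition:robust} itself but feeds the explicit threshold of Remark~\ref{remark:explicit_d_in_special_case}, which your argument would not deliver as directly.
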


\begin{remark}\label{remark:explicit_d_in_special_case}
    In all generality, $d^*$ is intractably tied to $(\eta^*,g^*)$. In the simplified experiment where $\eta$ is known and $g^*$ is $\mathcal{K}$-Lipschitz-continuous for some $\mathcal{K}>0$, it is however possible to show the existence of a constant $C_{\mu,\Phi}>0$ depending only on $\Phi$ and $\mu$ such that $\Lambda^\mathfrak{d}_T \to \infty$ as soon as
    \begin{equation*}
        \mathfrak{d} \geq C_{\mu,\Phi}\mathcal{K}^{-1}
        \inf_{\{C>0\}} \lVert g^*- C \rVert^4_{L^{2}[0,1]}.
    \end{equation*}
    This translates the intuition that the power of the test should be an increasing function of the distance from $g$ to the constant functions over $[0,1]$, as we verify empirically in Section~\ref{section:num}.
 \end{remark}
 An important implicit Assumption of Theorem~\ref{tm:LRT_for_g_constant_or_not} is that $\eta$ lies in the interior $\mathring{\Gamma}$ of its parameter space. The condition is acceptable in lower dimensional settings (the case $K=1$ covering a large array of applications), or in dense contexts where one expects the interaction matrix $\int_0^{\infty} \varphi_{kl}(s) \dif s$ not to contain any null value. In general however, a certain level of sparsity is expected, in the sense that $\int_0^{\infty} \varphi_{kl}(s) \dif s=0$ for some index coupes $(k,l)$, a consequence thereof is the existence of nuisance parameters on the boundary of $\Theta$. Singularities then arise in the distribution of the \textsc{mle} and therefore in that of the \textsc{lrs} $\Lambda^{\mathfrak{d}}_T$. They must be accounted for via a correction in the degrees of freedom in Theorem~\ref{tm:LRT_for_g_constant_or_not}. The correction depends on the number of zeros in the \textsc{mle} and we introduce the random counter
 \begin{align*}
     \hat{k}_T=\sum_{i}^d \mathbb{1}_{\{\hat{\vartheta}_{i,T}>0 \} \cap \{\hat{\vartheta}^0_{i,T}=0 \} },
 \end{align*}
 which is simply the number of estimates that are null in the constant model, and positive in the polynomial one. The following correction readily deduces from~\cite[Annex]{autocite}.
 \begin{corollary}\label{coro:correction}
     Grant the Assumptions of Theorem~\ref{tm:LRT_for_g_constant_or_not}. Then, for any $\eta \in \Gamma$, under $\Prob(\eta^*,g^*)$,  any $k \in \mathbb{N}$, for any sufficiently large $n$,
     \begin{equation*}
         \Prob\big[ \Lambda_T >x\lvert \hat{k}_T=k \big] \leq S_{\mathfrak{d}+k}(x)
     \end{equation*}
     where $S_i$ is the survival function of a $\chi^2(i)$ distribution.
 \end{corollary}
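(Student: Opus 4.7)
The plan is to reduce the LRS $\Lambda^\mathfrak{d}_T$ asymptotically to a quadratic form in the difference of two Fisher-metric projections onto convex cones, and then invoke the Chernoff-type machinery recalled in~\cite[Annex]{autocite}. Concretely, letting $\hat{\vartheta}^0_T$ denote the \textsc{mle} restricted to the constant submodel $\mathcal{H}_0$, a standard second-order Taylor expansion of $\mathcal{L}_T$ around $\vartheta^*=(\eta^*,\varpi^*)$, combined with the uniform convergence of the empirical observed information towards $I(\vartheta^*)$ already established within the proof of Theorem~\ref{thm:TCL}, yields
\begin{equation*}
    \Lambda^\mathfrak{d}_T
    =
    \bigl\lVert I(\vartheta^*)^{\frac{1}{2}}\sqrt{T}(\hat{\vartheta}_T-\vartheta^*)
    -
    I(\vartheta^*)^{\frac{1}{2}}\sqrt{T}(\hat{\vartheta}^0_T-\vartheta^*) \bigr\rVert_2^2
    + o_{\Prob(\vartheta^*)}(1).
\end{equation*}

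Next, I would apply Theorem~\ref{thm:TCL} jointly to both optimizations. Writing $H$ for the Painlevé--Kuratowski tangent cone of $\Theta$ at $\vartheta^*$ and $H_0 \subset H$ for that of $\Theta \cap \mathcal{H}_0$, the pair $(\sqrt{T}(\hat{\vartheta}_T-\vartheta^*),\sqrt{T}(\hat{\vartheta}^0_T-\vartheta^*))$ converges in law to $(\Pi_H X,\Pi_{H_0}X)$, where $\Pi_C$ denotes the $I(\vartheta^*)$-metric projection onto $C$ and $X \sim \mathcal{N}(0,I(\vartheta^*)^{-1})$. By the continuous mapping theorem the limiting LRS is $\lVert I(\vartheta^*)^{\frac{1}{2}}(\Pi_H X-\Pi_{H_0}X) \rVert_2^2$, a familiar object from cone-constrained likelihood theory.

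The final step is to translate the conditioning event $\{\hat{k}_T=k\}$ into cone geometry. By definition, on that event the \textsc{mle} in the full model has exactly $k$ positive coordinates corresponding to parameters that are null in the constrained model; asymptotically this pins $\Pi_H X$ to the relative interior of a specific face $F_k$ of $H$ of dimension $\mathfrak{d}+k$, containing $H_0$ as a $\mathfrak{d}$-codimensional linear subspace. Conditional on this event the cone projection degenerates into the linear projection on the $(\mathfrak{d}+k)$-dimensional orthogonal complement of $H_0$ inside $F_k$, so that the limiting variable is the squared norm of a standard Gaussian in $(\mathfrak{d}+k)$ dimensions. The Self--Liang--Chernoff inequality for faces of convex cones recalled in~\cite[Annex]{autocite} yields the stochastic domination $\Prob[\Lambda^\mathfrak{d}_T>x \vert \hat{k}_T=k] \leq S_{\mathfrak{d}+k}(x)$ up to $o(1)$ terms, hence the stated bound for $T$ large enough.

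The main obstacle is ensuring the joint weak convergence with $\vartheta^*$ lying on the boundary of $\Theta$ and correctly matching the combinatorics of the boundary face indicated by $\hat{k}_T$ with that of the tangent cone $H$ at a vertex where several coordinates vanish simultaneously. Once this geometric identification is in place, the stochastic domination is a direct consequence of the cone-projection bound; the rest of the argument is routine bookkeeping around Theorem~\ref{thm:TCL}.
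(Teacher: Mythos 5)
Your high-level route --- reduce $\Lambda^\mathfrak{d}_T$ asymptotically to the Fisher-metric residual $\lVert I(\vartheta^*)^{1/2}(\Pi_H X - \Pi_{H_0}X)\rVert^2$ of nested cone projections and invoke a Chernoff--Self--Liang argument --- is the natural one and presumably matches the cited annex, since the paper itself offers nothing beyond that citation. However your intermediate steps contain a concrete inconsistency and a genuine gap.

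The dimension bookkeeping is internally contradictory: you assert both that $F_k$ has dimension $\mathfrak{d}+k$ and that $H_0\subset F_k$ has codimension $\mathfrak{d}$ in $F_k$, which forces $\dim H_0 = k$. But $\dim H_0$ is fixed by the parametrization under $\mathcal{H}_0$ (of order $p+1$) and does not depend on the random count $\hat{k}_T$. What the argument should deliver is that the \emph{rank of the residual quadratic form}, conditionally on the face, is at most $\mathfrak{d}+k$ ($\mathfrak{d}$ from the Bernstein coefficients and $k$ from the flipped boundary coordinates of $\eta$); it is not either cone that has dimension $\mathfrak{d}+k$. Moreover $\{\hat{k}_T=k\}$ does not asymptotically pin $\Pi_H X$ to a single face $F_k$: which $k$ boundary coordinates flip, and what the remaining boundary coordinates do, is itself random, so the limiting conditioning event is a union of faces, not one face.

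The deeper gap is the final claim that, conditionally on the event, the limiting variable is ``the squared norm of a standard Gaussian in $\mathfrak{d}+k$ dimensions.'' Conditioning a Gaussian on a cone event does not preserve Gaussianity, so this does not follow from the projection geometry alone. The mechanism that makes the Chernoff bound work is that for $Y\sim\mathcal{N}(0,\mathrm{Id})$, $\lVert Y\rVert^2$ and $Y/\lVert Y\rVert$ are independent, so conditioning on any scale-invariant event leaves $\lVert Y\rVert^2\sim\chi^2$; the bound $\leq S_{\mathfrak{d}+k}$ then comes from taking the worst admissible rank over the face combinatorics. You gesture at this in your final paragraph as ``the main obstacle'' and ``routine bookkeeping,'' but that identification is exactly the content of the corollary, and it is absent from the outline. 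Without it, nothing connects the conditioning event to a $\chi^2(\mathfrak{d}+k)$ domination.
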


 \begin{remark}
     Our choice of a likelihood ratio test is motivated in part by that it does not require estimating the Fisher Information of the model, which size grows as $K^4$ where $K$ is the dimension of the process. This sidestep is naturally not entirely free of cost as Corollary~\ref{coro:correction} indicates a moderate loss of power is incurred for each additional null nuisance parameter. In this respect, our test is a conservative one. In the univariate case, where other choices may find one's preference, we refer to~\cite[Proposition 6.1]{Clinet} for confidence interval construction methods, which provide the sufficient basis for conducting a Wald or score test instead.
 \end{remark}

Corollary~\ref{coro:correction} ends our partial excursion out of the parametric domain. Before we proceed to the proofs, we offer some numerical results both simulated and applied.

\section{Numerical results}\label{section:num}
\subsection{Convergence of the kernel time-dependence test} We provide in figure~\ref{fig:test_verification} an empirical illustration of Theorem~\ref{tm:LRT_for_g_constant_or_not} and Proposition~\ref{prop:consistent}.  The behaviour of $\Lambda^{\mathfrak{d}}_T$ under the null is fist considered, to which end we simulate $N=40000$ trajectories of a Hawkes process with intensity
\begin{equation}\label{equ:null_sim}
    \lambda_t = \mu + \int_0^t e^{-2(t-s)} \dif N_s, \hspace{0.1cm} t \in [0,T],
\end{equation}
where $\mu=1$. We then turn our attention towards the alternative, and estimate the power function of the test for several different values of $\mathfrak{d}$ upon $4000$ simulations of a locally stationary Hawkes process, again with constant baseline $\mu=1$, and intensity
\begin{equation}\label{equ:alternative_sim}
    \lambda^T_t = \mu + \int_0^t (\gamma+ \alpha_0 \sin(\alpha_1 \frac{t}{T})) e^{-2(t-s)} \dif N^T_s, \hspace{0.1cm} t \in [0,T].
\end{equation}

The parameter $\alpha_0$, which is proportional to the uniform distance to $t \mapsto1$ of the sinusoidal reproduction rate in~\eqref{equ:alternative_sim}, is used as a proxy for the distance to the null hypothesis. The power of the test is thus understood as as a function of $\alpha_0$.
\begin{figure}[H]
    \centering
    \includegraphics[width=0.25\linewidth]{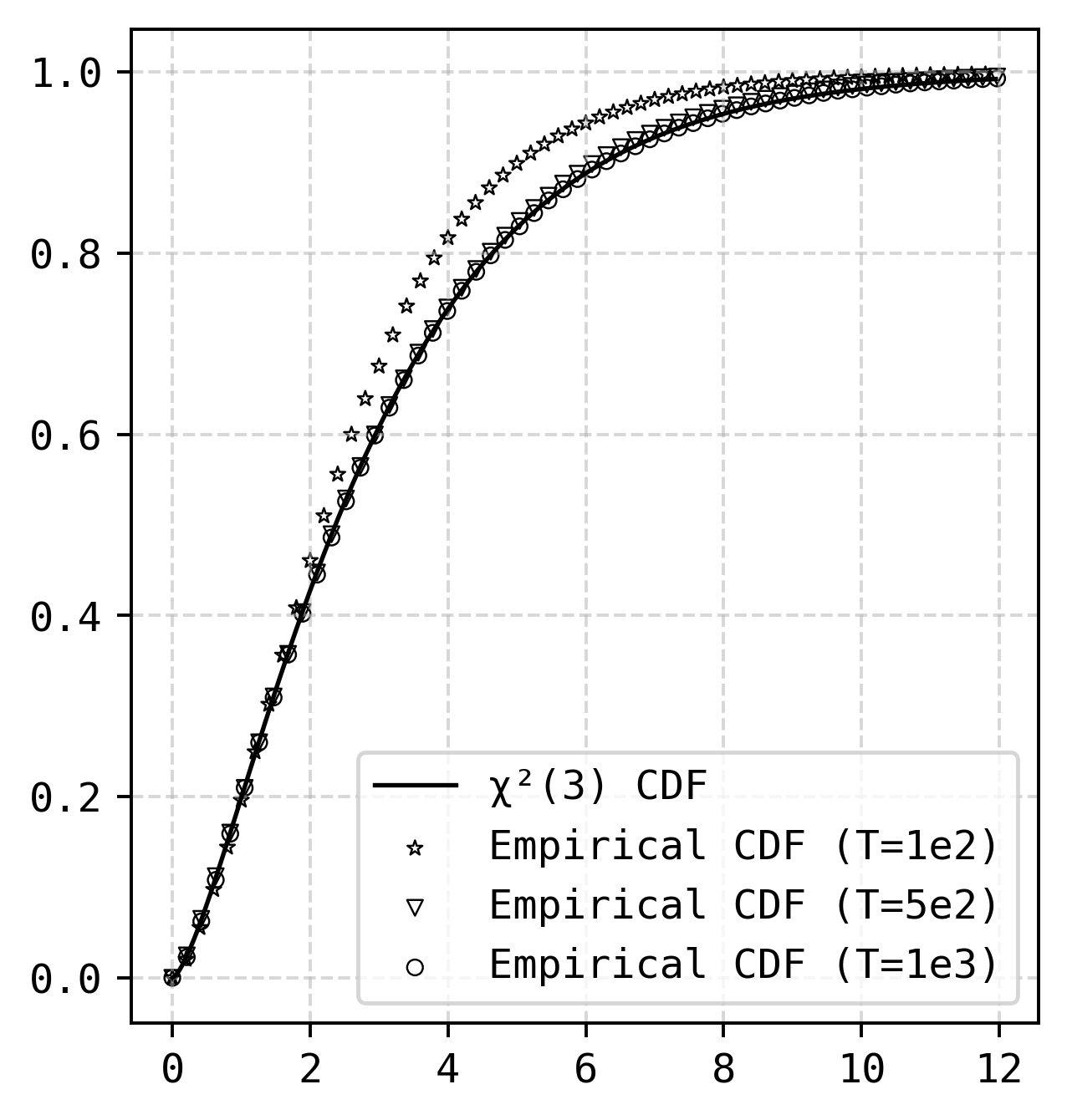}
    \includegraphics[width=0.25\linewidth]{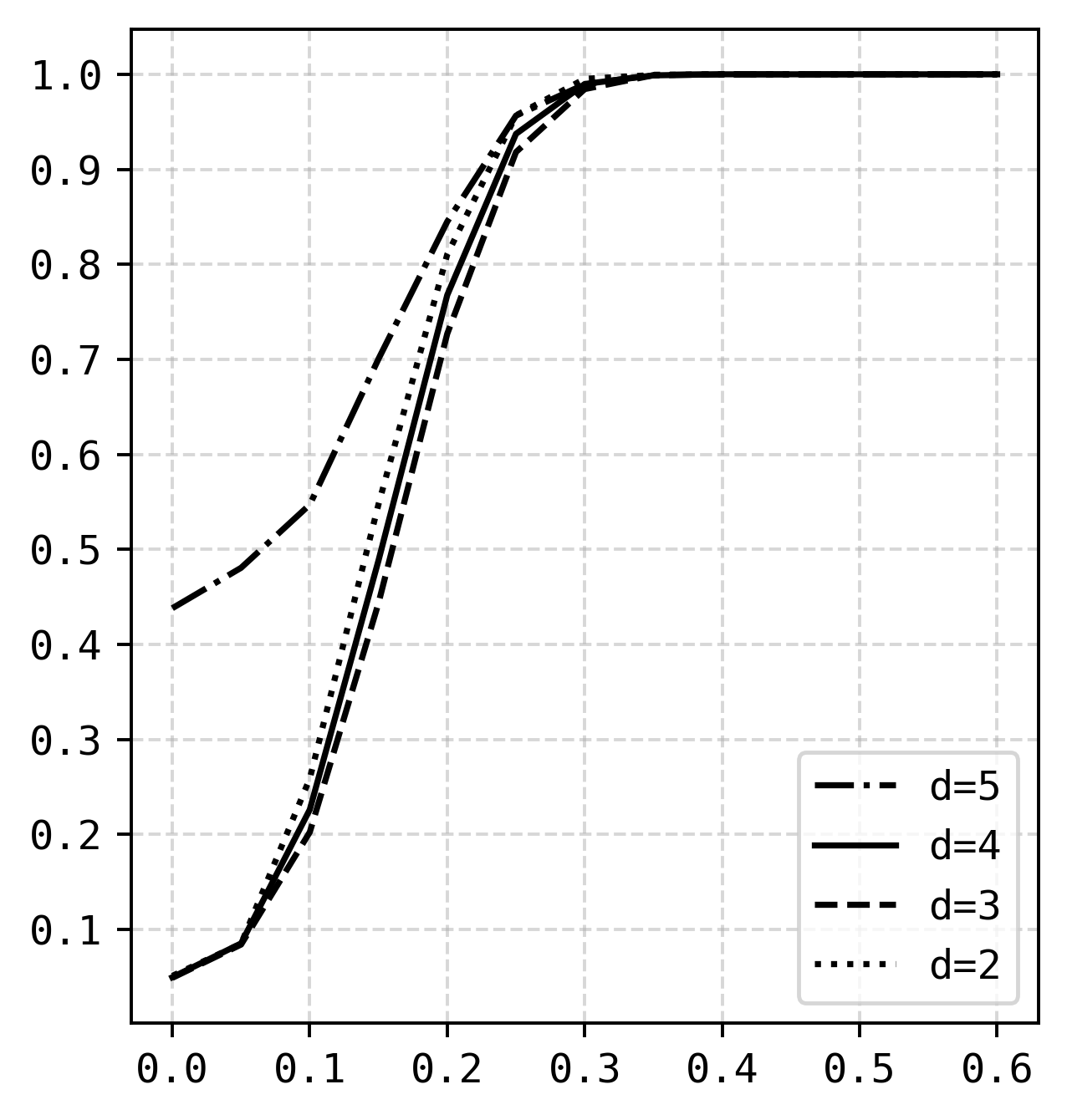}
    \caption{Distribution under the null and the alternative of the test. \textbf{Left}: empirical cumulative distribution function under the null with $\mathfrak{d}=3$ for three different values of $T$ ($N=40000$ simulations of the model~\eqref{equ:null_sim}). \textbf{Right}: empirical power function as a function of $\alpha_0$ and $4$ different values of $\mathfrak{d}$, $\gamma=\alpha_1=1$, and $T=10000$, over successive sets of $N=4000$ simulations of the model~\eqref{equ:alternative_sim}.}
    \label{fig:test_verification}
\end{figure}

Some practical remarks are in order. We observe that, at least for~\eqref{equ:alternative_sim}, the degree $\mathfrak{d}$ has little effect on the consistency of the test. This indicates that the smallest admissible value for $\mathfrak{d}$ as described in Proposition~\ref{prop:consistent} is rather low in practice. In contrast, the distribution of $\Lambda^{\mathfrak{d}}_T$ under the null depends non-linearly on $\mathfrak{d}$. As long as the standard deviation of the parameters' estimate remains below the size of $\Theta$, the test reaches its expected asymptotic distribution, as seen in the case $\mathfrak{d}=3$ on the left-hand side of Figure~\ref{fig:test_verification}. When $\mathfrak{d}$ is too high relatively to $T$ however, the estimates for $\varpi$ spill out of the boundaries of $\Xi$. A Gaussian limit fails to arise in the \textsc{mle}, and a $\chi^2(d)$ limit may not arise in the \textsc{lrs}.  This results in the singularities observed at $\mathfrak{d}=5$ on the right-hand side of Figure~\ref{fig:test_verification}. We have stressed in~\ref{section:nonparametric} that the convergence of $\Lambda^\mathfrak{d}_T$ only loosely depends on that of the \textsc{mle}. In regard of Figure~\ref{fig:test_verification}, this does not absolve one from exercising elementary caution, by which we mean verifying that the \textsc{mle} takes its values within admissible parameters, and otherwise lowering $\mathfrak{d}$.  In view of the right-hand side of Figure~\ref{fig:test_verification} this is unrestrictive in practice. We also reproduce in Figure~\ref{fig:g_estimation} the resulting estimation for $g$ so as to highlight the performance of the polynomial estimator $g^\mathfrak{d}$.

\begin{figure}[H]
    \centering
    \includegraphics[width=0.26\linewidth]{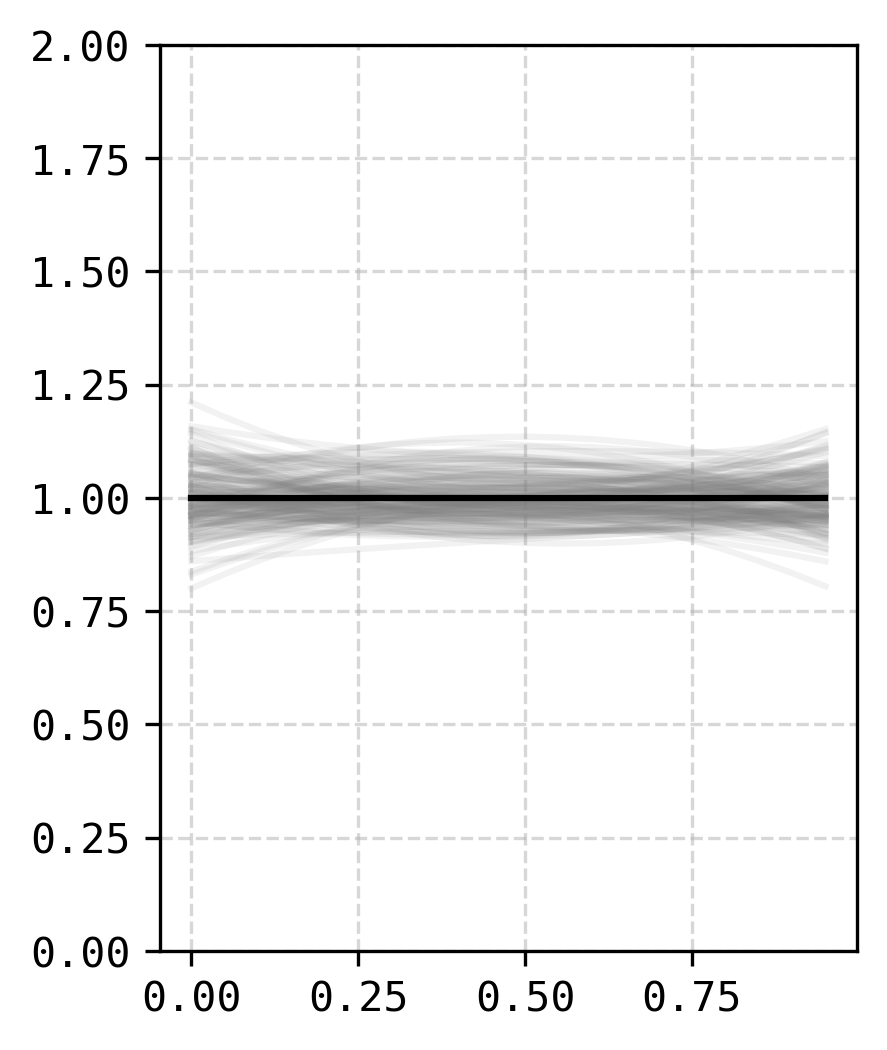}
    \includegraphics[width=0.26\linewidth]{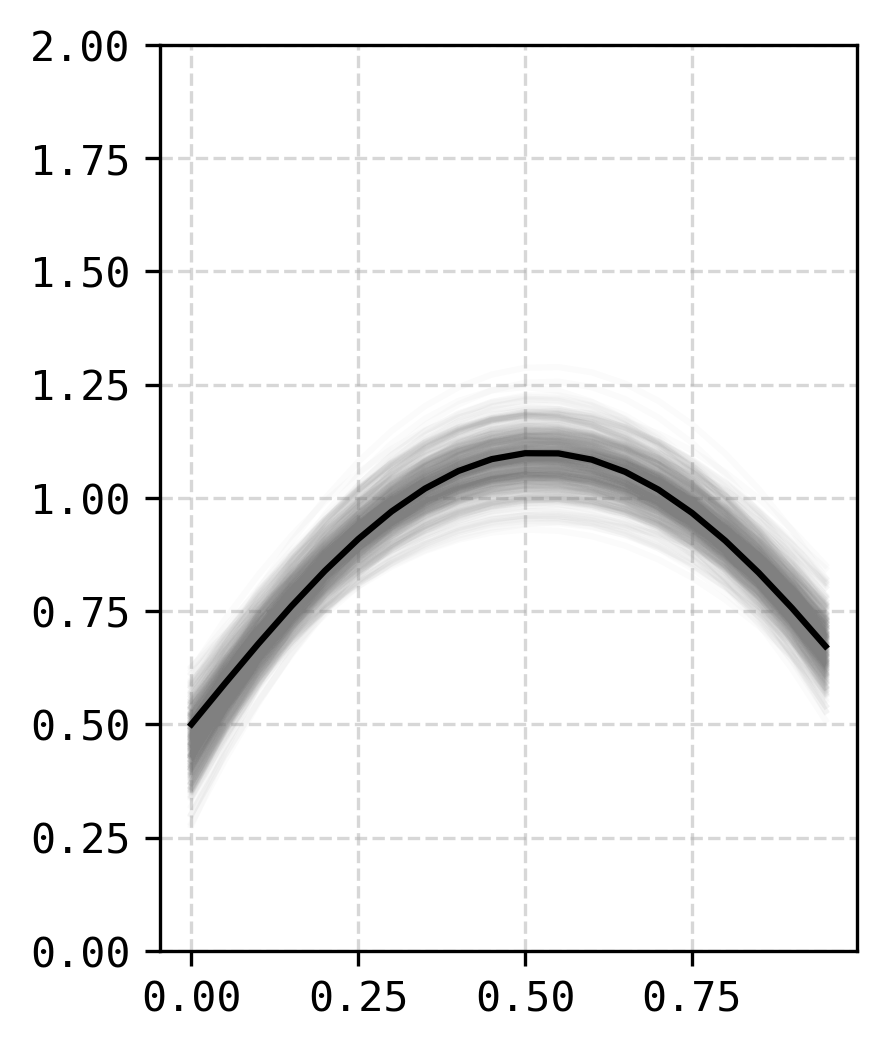}
    \caption{ \textsc{mle} $\hat{g}^\mathfrak{d}_T$ for $g=1$ as in~\eqref{equ:null_sim} (left) and a sinusoidal $g$ as in~\eqref{equ:alternative_sim} with $alpha_0=0.6,\alpha_1=5.0$ (right), over simulated data. Simulations performed via classical thinning algorithms, with $T=5000$, $\beta=2$, and $\mu=1$ in all cases.  $N=500$ estimates in light grey, true reproduction rate in black.}
    \label{fig:g_estimation}
\end{figure}

The results obtained in Figure~\ref{fig:g_estimation} are consistent with the claims of the preceding Sections. With a constant $g^*=1$, The \textsc{clt}~\ref{thm:TCL} describes the variations of the estimator around the true value. For smooth enough fluctuations of $g^*$, the estimator provides a convincing approximation of the true reproduction rate, although its convergence rate is not described by Theorem~\ref{thm:TCL} anymore. A fully nonparametric estimator with time-dependent degree is beyond the aims of the present paper, which requires a fixed $\mathfrak{d}$ for testing purposes. In view of Figure~\ref{fig:g_estimation}, we are still compelled to gauge, at the exploratory level, the performance of the polynomial \textsc{mle} $\hat{g}^{\mathfrak{d}}_T$ as $\mathfrak{d}$ and $T$ both increase. We find in Figure~\ref{fig:enterfull_nonparam} that the naive polynomial \textsc{mle} indeed provides a good candidate for a consistent estimator.
 
\begin{figure}[H]
    \centering
    \includegraphics[width=0.25\linewidth]{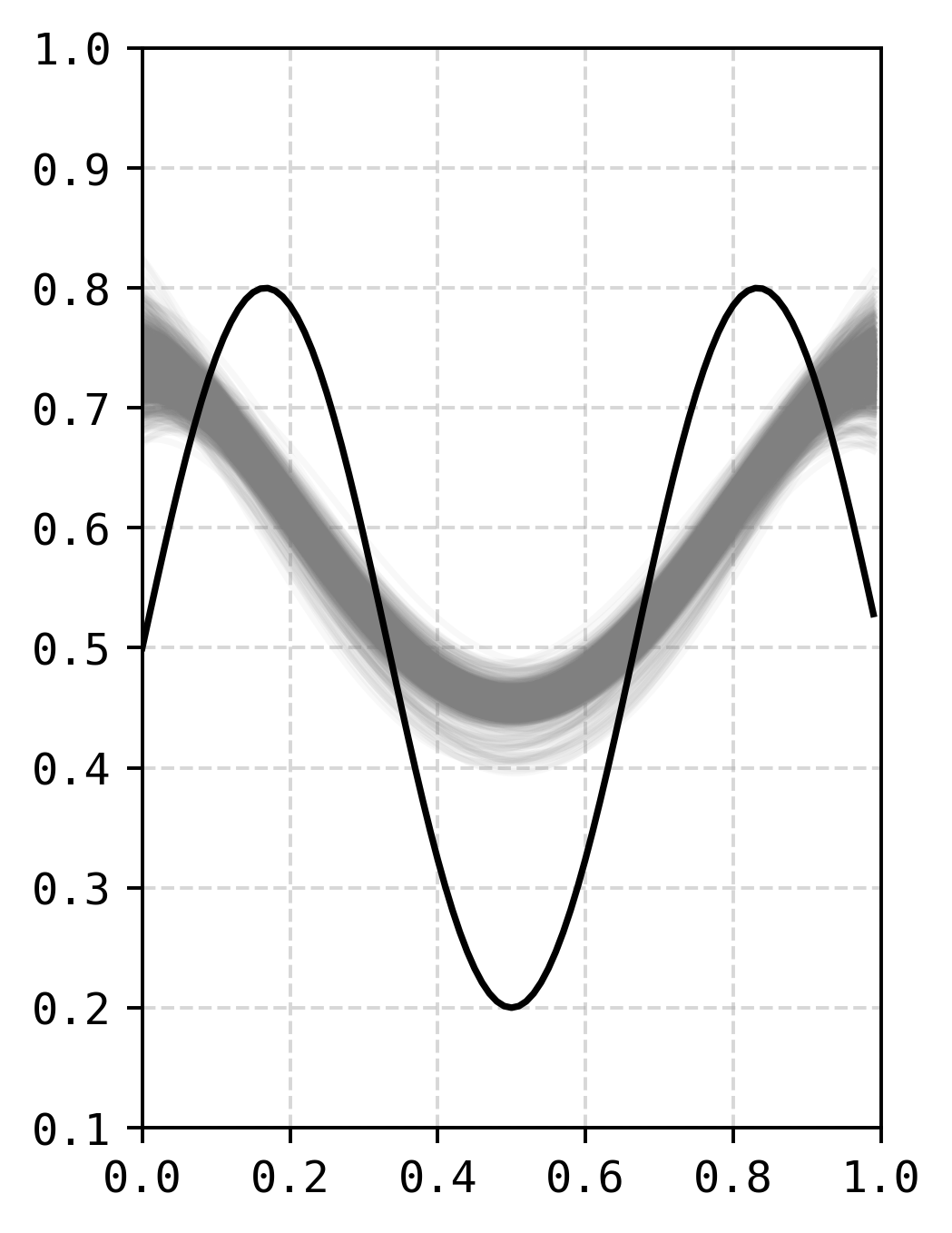}
    \includegraphics[width=0.25\linewidth]{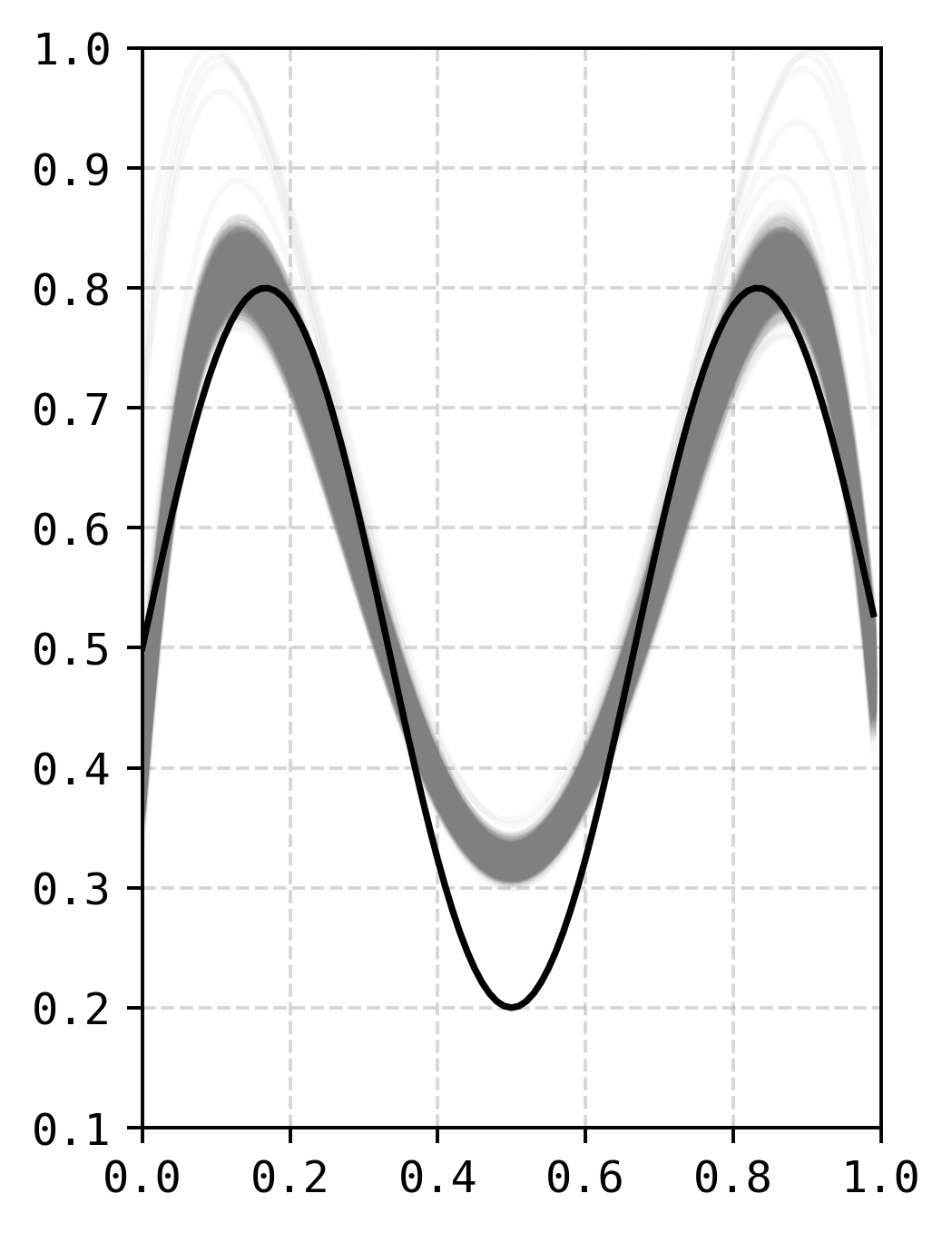}
    \includegraphics[width=0.25\linewidth]{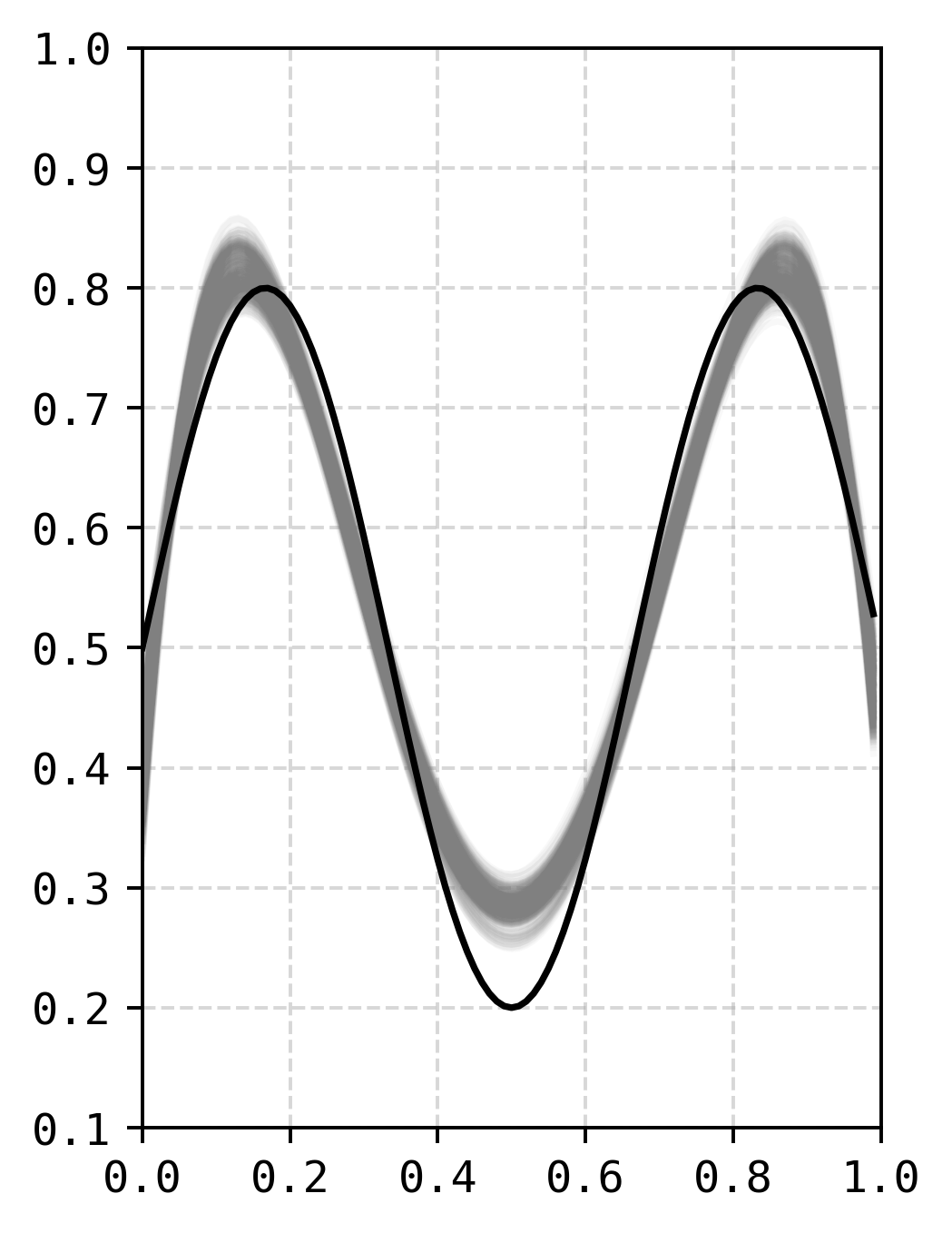}
    \caption{In grey: \textsc{mle} for $g$. In black: true reproduction rate, with intensity as in~\eqref{equ:alternative_sim}, $\mu=1,\beta=2$, $\gamma=0.5,\alpha_0=0.3,\alpha_1=3\pi$. $N=1000$ simulations via thinning with the pair $(\mathfrak{d},T)$ successively  set to $(4,10000)$ (\textbf{left}), $(6,20000)$ (\textbf{middle}), $(8,40000)$ (\textbf{right}).}
    \label{fig:enterfull_nonparam}
\end{figure}
Finally, we remark in Figures~\ref{fig:g_estimation} and~\ref{fig:enterfull_nonparam} that the \textsc{mle} correctly identifies global extrema even as large biases may subsist in uniform distance. This opens some interesting perspective regarding change-point problems, as the test described in~\ref{section:nonparametric} is compatible with certain detection procedures, notably the \textsc{cusum} algorithm as applied to Hawkes processes in~\cite{chevalier2023uncoveringmarketdisorderliquidity}. 
\section{Application to intraday power markets order flow}
\subsection{Context.} Limit order books (\textsc{lob}) are dynamic registry and matching systems used to centralise orders in electronic markets.  Liquidity is proposed in continuous time $t \in [0,T]$ either at the ask$^a$ or bid$^b$ side of the \textsc{lob} through limit orders ($\textsc{l}^a_t$,$\textsc{l}_t^b$), then to be matched by market orders ($\textsc{m}^a_t$,$\textsc{m}_t^b$), or simply cancelled ($\textsc{c}^a_t$,$\textsc{c}_t^b$). Point process models can naturally render the event-time structure of an \textsc{lob} and have been extensively used in that capacity. A detailed account of such models is found in~\cite[section 4]{ClinetYoshida}. In particular,~\cite{Toke2011} proposes a Hawkes model of the process $(\textsc{l}_t,\textsc{m}_t)=(\textsc{l}_t^a+\textsc{l}_t^b,\textsc{m}_t^a+\textsc{m}_t^b) $, finding an absence of $\textsc{l}\to \textsc{m}$ interactions on French equity markets. Hawkes models are also popular tools for the statistical study of energy markets, specifically power markets -- see~\cite{QEF_hawkes} for a review.~\cite{KRAMER2021105186} for instance estimated a stochastic baseline Hawkes process on $2015$ German intraday power market \textsc{lob}  data to isolate exogenous contributions 
 to the dynamic of $(\textsc{b}_t,\textsc{s}_t)=(\textsc{l}_t^b+\textsc{m}_t^b,\textsc{l}_t^a+\textsc{m}_t^b)$.  Here we consider \textsc{lob} data for $2023$ on the same market as~\cite{KRAMER2021105186} and implement a complete order flow model for the  $4$-dimensional process $(\textsc{F}_t)=(\textsc{l}^a_t,\textsc{l}^b_t,\textsc{m}^a_t,\textsc{m}^a_t)$.
\subsection{Dataset \& Model} The intraday power market serves the re-hedging and re-balancing needs induced by short-term\footnote{Occurring after the day-ahead auction has settled, that is.} variations in power demand and production. Electricity is traded via quarter-hourly to hourly future contracts, each originated at $15:00$ the day before delivery and expiring minutes before its delivery starts at time $T$. At $T^-=T-1$h, the delivery zone shrinks, thereby changing the nature of the product. We refer to the \textit{de facto} expiry $T^-$ as a \textit{virtual close}. Liquidity varies considerably as the session unfolds, with the bulk of trading activity concentrated in the hours preceding the virtual close.  This non-stationarity is typically accounted for via a (possibly stochastic) time-dependent baseline as in~\cite{KRAMER2021105186} or~\cite{DeschatreGruet}. Recalling the cluster representation of~\cite{Oakes}, such model choices will attribute order flow variations to exogenous factors alone. This is to say informed order flow emanating from fundamentals-motivated agents drives the entire fluctuation in liquidity, while traders remain impassive to the expiry looming closer. We apply the framework developed in Sections~\ref{section:parametric} and~\ref{section:nonparametric}  to assess whether one should also consider changes in participants' engagement with the intraday power market as a driving factor in liquidity.\\

The data consists of $21$ trading sessions of the German intraday power markets for the hourly future with delivery from $18:00$ to $19:00$ (\textsc{gmt}$+1$). The retained orders are registered on the day of delivery, as we leave the last hour of trading out of the sample to avoid modelling re-definitions of the delivery terms. Each trajectory of $(\textsc{f}_t)=(\textsc{L}^a_t,\textsc{L}^b_t,\textsc{M}^a_t,\textsc{M}^b_t)$ thus consists in $17$ hours of \textsc{lob} records from midnight to the virtual close. The point process $(\textsc{f}_t^T)$, is endowed with the intensity
\begin{equation*} 
    \lambda^{T^*}_{i,t}(\vartheta)
    =
    \mu\big(\frac{t}{T^-},\omega^{\mu}\big)
    +
    g\big(\frac{t}{T^-},\omega^g\big)
    \sum_{j \in I}
    \int_0^t
    \alpha_{ij}
    \beta e^{ - \beta (t-s)}\dif N^T_{j,s},
    \hspace{0.2cm}
    t \in [0,T^{-}],
\end{equation*}
where $\Prob(\vartheta)$ $\mu(\cdot,\omega^\mu)$ and $g(\cdot,\omega^g)$ are polynomials with coefficients $\omega^{\mu}=(\omega^{\mu}_i)$ and $\omega^{g}=(\omega^{g}_i)$ in the Bernstein basis of $\R_4[X]$ and we impose the interaction structure 
\begin{equation*}
(\alpha_{ij})=
\bordermatrix{%
    & \textsc{L}^a      & \textsc{L}^a      & \textsc{M}^a     & \textsc{M}^b \cr   
\textsc{L}^a   &  a & b & c & 0\cr 
\textsc{L}^b   & b & a & 0 & c\cr
\textsc{M}^a   & d & 0 & e & 0\cr
\textsc{M}^b   & 0 & d & 0 & e\cr
}
\end{equation*}
upon $(\alpha_{ij})$. It may be noted that the model is here unidentifiable without further constraints, hence we impose that $\varpi_0=1$. This does not modify our theoretical setting as the resulting model spans the same class of intensities. Our question regarding the agents behaviour can then be recast as the fluctuation test of Section~\ref{section:nonparametric}, which hypothesis simplifies into $\varpi_1= \cdots = \varpi_{\mathfrak{d}}=1$ under the preceding constraint, and no further modifications need be considered.

\subsection{Liquidity profile of the intraday power market}

In figure~\ref{fig:MLE_g_LOB_data} is reproduced the estimated reproduction rate $\hat{\rho} \colon x \in [0,1] \mapsto g( \hat{\eta}_T, x) \rho(\hat{\alpha}_T)$ over a selection\footnote{Data for the $21^{\textup{st}}$ of March $2023$ is skipped due to a half hour of missing data.} of $2023$ trading sessions

\begin{figure}[H]
    \centering
    \includegraphics[width=0.42\linewidth]{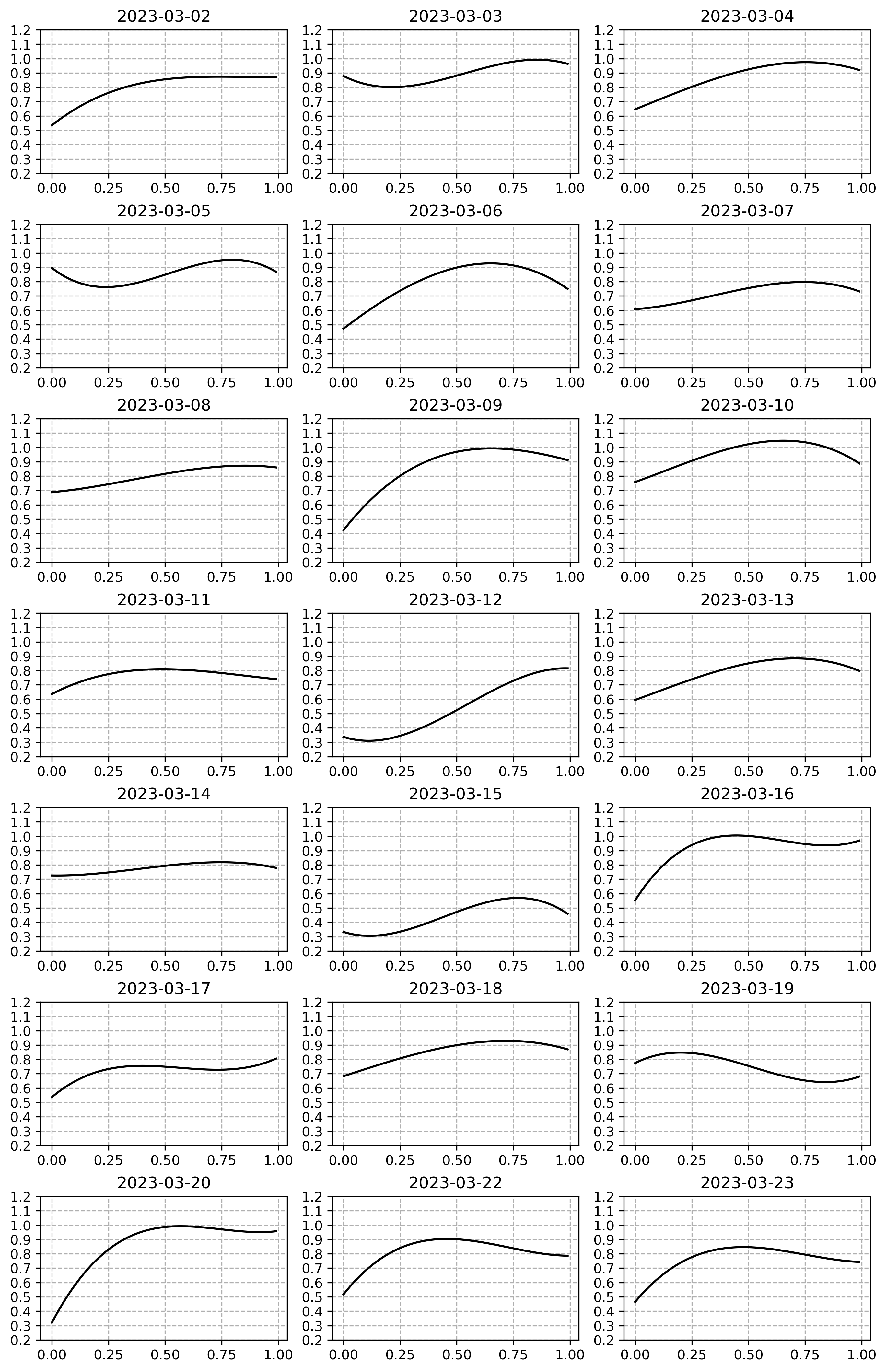}
    \caption{Estimated reproduction rate $\hat{\rho}$ on $\textsc{l} \nicefrac{}{} \textsc{m}$ orders arrivals from midnight to $T-1$h before delivery on $21$ trading sessions of March $2023$ for the intraday hourly power future with delivery starting at $T=18$h.}
    \label{fig:MLE_g_LOB_data}
\end{figure}

The null hypothesis that $g$ is constant is rejected at the $95\%$ confidence level for all but one of the trajectories, with the exception arising at the $03/14$ trading session. Though the precise pattern for $g$ appears idiosyncratic to each session a recurring motif can observed. The endogeneity rate tends to increase, reach an apex a few hours away from delivery and decrease afterwards. This would be consistent with traders re-hedging their positions as information on physical conditions at the close flows in, and these among them less prone to house physical risk progressively retreating from the market as delivery nears by.

\section{Conclusion}

\subsection{Findings and perspectives} We have shown in Section~\ref{section:parametric} that the conventional maximum likelihood estimator for point processes may be indifferently applied to a class of non-stationary non-linear multivariate Hawkes processes with time dependent reproduction. Our first result in Theorem~\ref{thm:TCL} takes the form of a central limit Theorem, in which we recover the weak convergence of the \textsc{mle} towards a Gaussian distribution. Our second result proves the convergence of the associated likelihood ratio test and extends it consistency over the entire space of $C[0,1]$ functions.\\

The asymptotic variance of the \textsc{mle} defined in Theorem~\ref{thm:TCL} is described by a uniformly weighted average of the stationary asymptotic Fisher Information matrices described in~\cite{OgataMLE}. A natural research direction suggested by the numerical results of Section~\ref{section:num} is in extending the convergence of $\hat{g}^\mathfrak{d}_T$, in the spirit of~\cite[Theorem 5.3]{Clinet}. It is perhaps relevant to note that replacing the Bernstein basis in favour of the discontinuous Haar-like basis formed of scaled indicator functions yields a theoretical setting compatible with the framework of~\cite{Clinet}. Should Theorem~\ref{thm:TCL} extend in such direction indeed, one may then ask how the global naïve \textsc{mle} compares to its local counterpart when applied to the estimation of the integrated parameter $\int_0^1 g(x) \dif x$. Again, we recall that a fixed degree $\mathfrak{d}$ is needed to obtain the results of Section~\ref{section:nonparametric}, hence we leave such problems to future research.\\

Additionally, it may also be noted that Theorem~\ref{coro:true_clt} extends the literature in the non-linear direction, in the sense that we have allowed for a large array of admissible activation function, beyond the usual case $\Phi_k(x)=x$. The main limitation of our results lie in that we have prohibited inhibitive interactions in the structure of the process. This restricts the relevance of introducing non-linear $\Phi_k$. Our examples are accordingly all set in the linear case. Some thorough numerical evidence of the convergence of the \textsc{mle} for inhibitive Hawkes processes are found in~\cite{Martinez}. Its theoretical verification constitutes another possible line of research. The main obstacle to the inhibitive extension lies in proving the existence of moments of sufficiently high order for the inverse intensity $(\lambda_t^{-1})$. This escapes the techniques in~\cite{ClinetYoshida} and the standard approach upon which we have relied, and may therefore warrant the development of a specific methodology. \\

In the context of intraday power markets, we find via our testing procedure some statistical evidence of fluctuations in the German market's endogeneity rate. This hints at a time-dependent participation and a possible segmentation in the typology of market agents, consistently with recent finding on in equity markets in~\cite{roueff2} and~\cite{chevalier2023uncoveringmarketdisorderliquidity}. A next step in this direction may for instance consider how this applies to optimal execution with Hawkes-based models as in~\cite{alfonsi}.

\section{Proofs}
\subsection{Organisation of the proofs and notation}\label{section:organisation}
As outlined in the introduction, we intend to satisfy the program of~\cite{ClinetYoshida}, whose notation we mimic. For any $k \in \mathbb{N}^\star$,  we denote by $C_{p}(\R^k)$ the continuous functions from $\R^k$ to some $\R^d$ of polynomial growth, and by $C_{\uparrow}(\R^k)$ the continuously differentiable functions of $C_p(\R^k)$ with gradient in $C_p(\R^k)$. The sufficient conditions for the convergence of the \textsc{mle} then express in terms of three sets of sufficient conditions. 
Firstly, some regularity conditions, which Assumption~\ref{ass:continuity} is tailored to fit. Secondly, the existence of uniform moment bounds for the intensity, found in Proposition~\ref{prop:moments}. Third and finally, the ergodicity of the $(\lambda^T_t(\eta,g))$ in the sense of Proposition~\ref{prop:ergodicity}. Apart from the proof of Proposition~\ref{prop:consistent}, the rest of this article is thus set in a parametric context. We re-employ the notation~\eqref{equ:abuse} from Section~\ref{section:parametric} so as to reflect this setting and lighten the statement of the following results.
\begin{Proposition}\label{prop:moments}
    With notation~\eqref{equ:abuse} and Assumptions~\ref{ass:lipschitz} to~\ref{ass:integrability}, for any  $i = 0\hdots 3$ and any $q \in \mathbb{N}$, 
    \begin{equation*}
        \sup_{T>0} \sup_{t \in [0,T]}
        \mathbb{E}\Bigl[
       \sup_{\vartheta \in \Theta}
       \lVert \partial_{\vartheta}^i \lambda_t^T
      (\vartheta) \rVert_q^q \Bigr] < \infty.
    \end{equation*}
\end{Proposition}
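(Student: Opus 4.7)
The plan is to reduce the moment bound for $\partial^i_\vartheta \lambda^T_t(\vartheta)$ to standard moment estimates for the counting process $(N^T_t)$ itself under $\Prob(\eta^*, g^*)$, by exploiting compactness of $\Theta$ together with the integrability conditions on $\varphi$ and its derivatives. The key observation is that, although the stability Assumption~\ref{ass:stability} only applies at the true parameter, it already supplies uniform moment control on $(N^T_t)$, which can then be leveraged to bound stochastic integrals against $dN^T_s$ for arbitrary $\vartheta$.

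I would first establish that $\sup_{T>0} \sup_{t \in [0,T]} \mathbb{E}[(\lambda^T_{k,t}(\eta^*, g^*))^q]$ is finite for every $q \in \mathbb{N}$. Using Assumption~\ref{ass:lipschitz}, the intensity is bounded above by $|\Phi_k(0)| + \sup_x \mu_k(x,\eta^*) + \sum_l \int_0^t \bar{g}^* \varphi_{kl}(t-s,\eta^*) dN^T_{l,s}$, with $\bar{g}^* = \sup_x g^*(x)$. By Assumption~\ref{ass:stability}, the dominating kernel $\bar{g}^* \varphi_{kl}(\cdot,\eta^*)$ has spectral radius strictly less than one, so the Poisson embedding of Proposition~\ref{prop:embedding} yields a pathwise domination of $(N^T_t)$ by a stationary linear Hawkes process, whose moments of all orders are classical (renewal-equation / Br\'emaud--Massouli\'e-type argument). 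This delivers the desired uniform moment control on $(N^T_t)$.

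For arbitrary $\vartheta \in \Theta$, I would dominate $\sup_{\vartheta \in \Theta} \lambda^T_{k,t}(\vartheta)$ by $C_1 + \sum_l \int_0^t \tilde{\varphi}_{kl}(t-s) dN^T_{l,s}$, with $\tilde{\varphi}_{kl}(s) = (\sup_{\varpi} g(\cdot,\varpi))(\sup_{\eta} \varphi_{kl}(s,\eta))$ deterministic and lying in every $L^q$ by Assumption~\ref{ass:integrability} and compactness of $\Theta$. A Rosenthal-type inequality for stochastic integrals against $N^T$, combined with the previous step, settles the case $i = 0$; since the bound is already $\vartheta$-free, the $\sup_{\vartheta}$ passes harmlessly through the expectation. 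For $i \in \{1,2,3\}$, I would differentiate $\vartheta \mapsto \Phi_k[\,\cdot\,]$ under Assumption~\ref{ass:differentiability} and expand $\partial^i_\vartheta \lambda^T_{k,t}(\vartheta)$ as a finite polynomial in the higher derivatives of $\Phi_k$, in $\partial^j_\eta \mu$, $\partial^j_\varpi g$, and in the stochastic integrals $\int_0^t \partial^{j_1}_\varpi g(t/T,\varpi) \partial^{j_2}_\eta \varphi_{kl}(t-s,\eta) dN^T_{l,s}$. Each such integral falls under the same domination scheme via Assumption~\ref{ass:integrability}, and the remaining factors are uniformly bounded on $\Theta$ by Assumption~\ref{ass:continuity}.

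The hard part is that Assumption~\ref{ass:stability} ensures subcriticality only at the true parameter, so one cannot iterate a Gronwall bound directly on $\lambda^T_t(\vartheta)$ for arbitrary $\vartheta$; every moment estimate must be routed through the observed $N^T$, whose dynamics are the only ones we control. A secondary subtlety is that the dominating kernel $\tilde{\varphi}_{kl}$ must be genuinely time-homogeneous so that the moment constants remain independent of $T$ and $t$; this is precisely what the combination of Assumptions~\ref{ass:stability} and~\ref{ass:integrability} enables, but it does require checking that the Rosenthal-type inequalities in play admit constants that do not blow up with $t$.
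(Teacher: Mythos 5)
Your proposal is substantially correct and shares the paper's two core ideas: (i) a pathwise domination of $\sup_\vartheta \lambda^T_t(\vartheta)$ by a $\vartheta$-free quantity routed through the observed counting process $N^T$ (so that Assumption~\ref{ass:stability}, which holds only at the true parameter, can be exploited), and (ii) moment control of the resulting stochastic integrals by splitting $\dif N^T = \lambda^T(\eta^*,g^*)\dif t + \dif M^T$ and applying a BDG/Rosenthal-type bound. The main organizational difference is that the paper's Lemma~\ref{Lemma:jaisson_like} explicitly performs a Jaisson--Rosenbaum resolvent decomposition: it resolves the renewal inequality $\lambda^T \leq \Phi[\mu] + \bar\varphi^* \star \lambda^T + \bar\varphi^* \star \dif M^T$ into $\lambda^T \leq \Phi[\mu] + \bar\Psi^* \star \mu + \bar\Psi^* \star \dif M^T$, replacing the $\dif N^T$ integral by a convolution of a fixed $L^q$ kernel $\bar\Psi^\Theta$ against the martingale alone. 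This is exactly what makes the moment constants uniform in $T$ and $t$: the subtlety you flag (``Rosenthal constants that do not blow up with $t$'') is handled for free once the resolvent sits in $L^q[0,\infty)$ independently of $T,t$. The paper's Lemma~\ref{lemma:recursive_moments} then runs a clean induction on $q = 2^p$ using BDG, simultaneously bootstrapping the moments at $(\eta^*,g^*)$ rather than establishing them as a preliminary step. Your Step 1 (quoting classical moments for dominating stationary linear Hawkes) is a valid shortcut that the paper chooses not to take, and the rest of your argument essentially reproduces the paper's. One minor inaccuracy: you cite Assumption~\ref{ass:continuity} for boundedness of the higher derivatives of $\Phi_k$, but that assumption concerns equi-continuity of $\partial^i_\eta\varphi$ and $\partial^i_\eta\mu$; the control of $\Phi_k^{(j)}$ rests instead on smoothness and Lipschitz-continuity of $\Phi_k$ (cf.\ Remark~\ref{remark:extend_to_derivatives}), a point both you and the paper treat informally.
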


Proposition~\ref{prop:moments} corresponds to Assumption \textbf{[\textsc{a}2]} in~\cite{ClinetYoshida}, and the following ergodic result to Assumption \textbf{[\textsc{a}3]}. In its statement, for any smooth enough function $f \colon \Theta \mapsto \R$, we denote by $\partial_{\vartheta}^{\otimes 2} f$ the hessian matrix of $f$.

\begin{Proposition}\label{prop:ergodicity}
     Let $\psi \in C_{\uparrow}(\R^K \times \R^K \times \R^K(d+p+1) \times \R^{K (d+p+1)^2}$. For any $\vartheta \in \Theta$, let 
     \begin{align*}
         Y^T_t(\vartheta)
         &=
         \big( \lambda^T_t(\eta^*,g^*) ,\lambda^T_t(\vartheta),\partial_{\vartheta}\lambda^T_t(\vartheta),\partial_{\vartheta}^{\otimes 2} \lambda^{T}_t(\vartheta)\big)
         \\
         Y^{x,\infty}_t(\vartheta),
         &=
         \big( \lambda^{x,\infty}_t(\eta^*,g^*) ,\lambda^{x,\infty}_t(\vartheta),\partial_{\vartheta}\lambda^{x,\infty}_t(\vartheta),\partial_{\vartheta}^{\otimes 2} \lambda^{x,\infty}_t(\vartheta)\big).
     \end{align*}
     Then, under Assumptions~\ref{ass:lipschitz} to~\ref{ass:integrability}, for any $u \in [0,1]$,
    \begin{equation*}
        \sup_{\vartheta \in \Theta}
        \Big\lVert 
        \frac{1}{T}
        \int_0^{Tu}\psi \big(Y^T_t(\vartheta)\big)
        \dif t
        -
        \int_0^u 
        \mathbb{E}\bigl[\psi \big(Y^{x,\infty}_0(\vartheta)\big)\bigr] \dif x
        \Big\rVert
        \to 0
    \end{equation*}
    in $L^1(\Prob(\eta^*,g^*))$ as $T \to \infty$.
\end{Proposition}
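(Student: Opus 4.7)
The plan is to reduce the convergence to a pointwise coupling argument between $(N^T_t)$ and the family of stationary Hawkes processes $(N^{x,\infty}_t)$ embedded into the same Poisson base, and to transfer it to the full statement via dominated convergence and a compactness argument on $\Theta$. First, the rescaling $t = Tx$ rewrites the quantity of interest as
\begin{equation*}
\int_0^u \bigl( \psi(Y^T_{Tx}(\vartheta)) - \mathbb{E}[\psi(Y^{x,\infty}_0(\vartheta))] \bigr)\, \dif x.
\end{equation*}
The polynomial growth of $\psi$ together with the uniform moment bounds of Proposition~\ref{prop:moments} provide an $L^1(\dif x \otimes \Prob(\eta^*,g^*))$-dominant for the integrand, so by dominated convergence it suffices to show that for almost every $x \in [0,u]$, $\psi(Y^T_{Tx}(\vartheta))$ converges to $\mathbb{E}[\psi(Y^{x,\infty}_0(\vartheta))]$ in $L^1$, uniformly in $\vartheta$.

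The pointwise statement would be obtained as follows. Fix $x$, and construct a stationary Hawkes process $(\bar N^{x,\infty}_t)$ with intensity driven by the frozen parameters $\mu(x,\eta^*)$ and $g(x)$ on the same Poisson base as $N^T$; by stationarity, its intensity at time $Tx$ has the same law as $\lambda^{x,\infty}_0(\vartheta)$. The pathwise difference $|\lambda^T_{Tx}(\vartheta) - \bar\lambda^{x,\infty}_{Tx}(\vartheta)|$ then splits into a \emph{boundary term} arising because $N^T$ starts at $0$ while $\bar N^{x,\infty}$ uses its full past, and a \emph{time-variation term} measuring the discrepancy between $\mu(\cdot,\eta^*), g^*$ evaluated at $t/T$ versus $x$ across the effective memory window. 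The time-variation term is controlled by the uniform continuity of $\mu(\cdot,\eta^*), g^*$ on $[0,1]$ combined with the $L^1$-integrability of $\varphi(\cdot,\eta^*)$ from Assumption~\ref{ass:integrability}. The boundary term is handled by exploiting the coupling: the difference $|N^T_t - \bar N^{x,\infty}_t|$ satisfies, via the imbedding identity of Proposition~\ref{prop:embedding} and the Lipschitz condition of Assumption~\ref{ass:lipschitz}, a Volterra inequality whose kernel is dominated by $\sup_y g^*(y)\,\varphi(\cdot,\eta^*)$, and Assumption~\ref{ass:stability} closes the estimate and yields decay as $T \to \infty$.

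Uniformity in $\vartheta$ would follow from a standard covering argument on the compact $\Theta$: the equi-continuity of $\mu, \varphi, g$ and their derivatives in $\vartheta$ provided by Assumption~\ref{ass:continuity}, together with $\psi \in C_\uparrow$, transfers the pointwise estimate to a uniform one. The derivative components of $Y^T_t$ are treated along identical lines, with the observation that each $\partial^i_\vartheta \lambda^T_t$ satisfies a linear integral equation of the same form as $\lambda^T_t$ itself, driven by $\partial^i_\vartheta \varphi(\cdot,\vartheta)$; the full $L^q$ integrability granted by Assumption~\ref{ass:integrability} for $i=0,\dots,3$ is precisely what enables the Volterra estimates to close for the Hessian component.

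The main difficulty lies in closing the pathwise comparison of the boundary term under the non-stationarity of $N^T$. A direct Volterra inequality on the difference is not stationary, so one has instead to iterate the imbedding and majorize the coupling error by a scalar \emph{upper-process} — a stationary Hawkes with parameters frozen at the uniform maxima $\sup_y g^*(y)$ and $\varphi(\cdot,\eta^*)$ — for which Assumption~\ref{ass:stability} guarantees sub-criticality and geometric decay of the memory. This is the mechanism already exploited in~\cite[Chapter 3]{kwan2023asymptotic}, whose proof scheme the present argument substantially transposes to the locally stationary setting.
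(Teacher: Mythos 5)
There is a genuine gap in your argument: the dominated-convergence reduction fails. After rescaling, the quantity to control is $\int_0^u \bigl( \psi(Y^T_{Tx}(\vartheta)) - \mathbb{E}[\psi(Y^{x,\infty}_0(\vartheta))] \bigr)\dif x$, and you assert that it suffices to prove that for a.e.\ fixed $x$, $\psi(Y^T_{Tx}(\vartheta)) \to \mathbb{E}[\psi(Y^{x,\infty}_0(\vartheta))]$ in $L^1(\Prob(\eta^*,g^*))$. This pointwise statement is false. Your coupling produces the pathwise closeness $\lambda^T_{Tx}(\vartheta) \approx \bar\lambda^{x,\infty}_{Tx}(\vartheta)$, where $\bar\lambda^{x,\infty}_{Tx}(\vartheta)$ has the law of $\lambda^{x,\infty}_0(\vartheta)$ by stationarity --- but $\bar\lambda^{x,\infty}_{Tx}(\vartheta)$ remains a genuinely random variable with nonvanishing variance, so its $L^1$-distance to the constant $\mathbb{E}[\psi(Y^{x,\infty}_0(\vartheta))]$ does not shrink to zero as $T\to\infty$. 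The coupling tells you that $\psi(Y^T_{Tx}(\vartheta))$ converges to $\psi(\bar Y^{x,\infty}_{Tx}(\vartheta))$, not to its expectation; the step from the latter to its mean requires an ergodic average over a time window, not a marginal law.

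This is precisely why the paper's proof is built around a block-averaging (KCD) scheme rather than a pointwise coupling. The interval $[0,Tu]$ is partitioned into windows $[(i-1)\Delta_T, i\Delta_T)$ with $\Delta_T = o(T)$ but $\Delta_T \to \infty$; on each window the coupling to a frozen stationary process $(\lambda^{i\Delta_T,\infty}_t)$ controls the non-stationarity error, while the \emph{ergodic theorem} for the stationary process over a window of growing length (Lemmata~\ref{Lemma:stationary_ergodicity}--\ref{lemma:uniform_ergodicity}) replaces the in-window time-average with $\mathbb{E}[\psi(\lambda^{i\Delta_T,\infty}_0)]$. The final step is a Riemann-sum argument in $x$ (Lemma~\ref{equ:final_approximation}). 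Your argument has the coupling and the Riemann ingredients but omits the intermediate averaging, which is the step that actually kills the stochastic fluctuations; because of this omission the dominated-convergence reduction you rely on cannot close. A secondary issue is that the uniformity in $\vartheta$ in the paper is not obtained by a covering argument on $\Theta$ but by the Sobolev-inequality device of~\cite[Proposition 3.8]{ClinetYoshida} applied after Proposition~\ref{prop:moments}, which is also what is used in the proof of Proposition~\ref{prop:consistent}.
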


Grant Propositions~\ref{prop:moments} and~\ref{prop:ergodicity}, which proofs are postponed to Sections~\ref{section:proof_of_moments} and~\ref{section:proof_of_ergodicty}. Work in the setting of Section~\ref{section:parametric}.  Theorem~\ref{thm:TCL} and Corollary~\ref{coro:true_clt} then hold as a consequence of~\cite[Theorem 3.11]{ClinetYoshida}, provided that one verifies two minor points of attention. One should firstly show that~\cite[Lemma 3.10]{ClinetYoshida} correctly extends to the locally stationary setting, and that the singularities induced by the existence of boundary coefficients in $\eta^*$ do fall within the scope of the preceding results. \\

The first point reduces to showing that the master Theorem for \textsc{m}-estimator (see for instance \cite[Theorem 5.7]{VanDenVaartAsymptoticStatistics}) does apply to the mixture limit functions found in Proposition~\ref{prop:ergodicity}, as we do in the proof of Proposition~\ref{prop:consistent}. The second point is straightforward: the constrained coordinates in $\eta$ live in some positive orthant and one needs not introduce a sequence of approximating cones. Under this very simple geometry, the array of results obtained as a consequence of Propositions~\ref{prop:moments} and~\ref{prop:ergodicity} are indeed sufficient conditions for Theorem~\ref{thm:TCL} in its full generality. See specifically that Proposition~\ref{prop:consistent}, and Lemmata 3.12 and 3.13 in~\cite{ClinetYoshida} respectively guarantee the sufficient conditions for Theorem~\ref{thm:TCL} given in Assumption $1$, and Assumptions $2^{2*}$ and $3^*$ of~\cite[p 695]{AndrewsDF}. 

\begin{proposition}\label{prop:consistent}
    Work under the setting of Section~\ref{section:parametric}, with, in particular, $g^*=g(x,\varpi^*)$ as in~\eqref{equ:well_param}. Grant Assumptions~\ref{ass:lipschitz} to~\ref{ass:integrability}. Then, the \textsc{mle} $\hat{\vartheta}_T=(\hat{\eta}_T,\hat{\varpi}_T)$ as in~\eqref{equ:parametric_mle} is a consistent estimator of $\vartheta^*=(\eta^*,\varpi^*)$.
\end{proposition}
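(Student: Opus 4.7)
The approach is to apply the standard consistency theorem for M-estimators on a compact parameter space (e.g.~\cite[Theorem 5.7]{VanDenVaartAsymptoticStatistics}): it suffices to establish uniform convergence in probability of $\tfrac{1}{T}\mathcal{L}_T$ over $\Theta$ to a deterministic limit $\ell_\infty$ which is uniquely maximized at $\vartheta^*$.

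For uniform convergence, the plan is to use the Doob--Meyer decomposition $\dif N^T_{k,s} = \lambda^T_{k,s}(\vartheta^*)\dif s + \dif M^T_{k,s}$ with $(M^T_{k,\cdot})$ a $\Prob(\vartheta^*)$-martingale to split
\begin{equation*}
    \tfrac{1}{T}\mathcal{L}_T(\vartheta) = \tfrac{1}{T}\sum_{k=1}^K \int_0^T \bigl[\lambda^T_{k,s}(\vartheta^*)\log\lambda^T_{k,s}(\vartheta) - \lambda^T_{k,s}(\vartheta)\bigr]\dif s + \tfrac{1}{T}\sum_{k=1}^K \int_0^T \log\lambda^T_{k,s}(\vartheta)\dif M^T_{k,s}.
\end{equation*}
The compensator term matches the framework of Proposition~\ref{prop:ergodicity} applied to $\psi(a,b) = a\log b - b$; although $\log$ lacks polynomial growth, Assumption~\ref{ass:no_inhibition} yields a uniform positive lower bound on the intensities over $\Theta$, permitting a smooth truncation of $\psi$ below that bound without changing the integrand. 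This gives uniform $L^1(\Prob(\vartheta^*))$-convergence to $\ell_\infty(\vartheta) = \sum_k \int_0^1 \mathbb{E}[\lambda^{x,\infty}_{k,0}(\vartheta^*)\log\lambda^{x,\infty}_{k,0}(\vartheta) - \lambda^{x,\infty}_{k,0}(\vartheta)]\dif x$. The martingale term is shown to vanish uniformly via Burkholder--Davis--Gundy: its predictable quadratic variation is of order $T$ in expectation by Proposition~\ref{prop:moments}, and a chaining argument over the compact $\Theta$, relying on the equicontinuity of Assumption~\ref{ass:continuity} and the moment control of $\partial_\vartheta\lambda^T$ at order $1$, upgrades the pointwise $O_p(T^{-1/2})$ bound to uniformity.

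For the unique maximizer, the elementary inequality $y\log x - x \leq y\log y - y$ (strict unless $x=y$), applied inside the expectation, yields $\ell_\infty(\vartheta) \leq \ell_\infty(\vartheta^*)$ with equality only if $\lambda^{x,\infty}_{k,0}(\vartheta) = \lambda^{x,\infty}_{k,0}(\vartheta^*)$ $\Prob(\vartheta^*)$-a.s. for almost every $x \in [0,1]$ and every $k$. Continuity in $x$ promotes this to equality throughout $[0,1]$, and the shared Poisson base $(\pi_k)$ in~\eqref{equ:the_stationary_intensities} reduces the equality of random intensities to an equality of their arguments, which after integration against time produces $g(x,\varpi)\varphi_{kl}(\cdot,\eta) = g(x,\varpi^*)\varphi_{kl}(\cdot,\eta^*)$ and $\mu_k(x,\eta) = \mu_k(x,\eta^*)$ almost everywhere. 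Assumption~\ref{ass:identifiability} then forces $\eta = \eta^*$, after which the identifiability of the parametric family $\{g(\cdot,\varpi)\}$ (automatic in the Bernstein setting of Section~\ref{section:nonparametric}) yields $\varpi = \varpi^*$.

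The principal obstacle lies in the uniform martingale control: chaining a Burkholder--Davis--Gundy bound across $\Theta$ while carrying a logarithm demands orchestrating Proposition~\ref{prop:moments} at orders $1$ and $2$ with the equicontinuity of Assumption~\ref{ass:continuity}, and particular care near the lower intensity bound. The uniqueness step is more routine once Assumption~\ref{ass:identifiability} is in play, though its passage from the $x$-mixture back to a pointwise identification rests on the continuity of the model in $x$.
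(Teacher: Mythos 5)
Your proposal follows the same architecture as the paper's proof: decompose $\mathcal{L}_T$ via Doob--Meyer, feed the compensator into Proposition~\ref{prop:ergodicity}, control the martingale remainder, and conclude through the M-estimator master theorem of~\cite[Theorem 5.7]{VanDenVaartAsymptoticStatistics}. The paper first rewrites the compensator into the Kullback--Leibler form $\Lambda_T(\vartheta,\vartheta^*)$, which differs from your $\tfrac{1}{T}\mathcal{L}_T$ only by a $\vartheta$-independent constant, so your $\ell_\infty$ and the paper's $V$ share the same maximizer -- that difference is cosmetic. The one genuine technical divergence is in upgrading the pointwise martingale bound to uniformity over $\Theta$: you propose a BDG-plus-chaining argument driven by the equicontinuity of Assumption~\ref{ass:continuity}, whereas the paper applies the Sobolev-embedding inequality from~\cite[Lemma 3.10]{ClinetYoshida}, bounding $\mathbb{E}\bigl[\sup_{\vartheta\in\Theta}\lvert\mathcal{M}^T_T(\vartheta)\rvert^2\bigr]$ by $L^2(\Theta)$-integrals of $\mathcal{M}^T_T$ and $\partial_\vartheta\mathcal{M}^T_T$; the Sobolev route avoids having to establish a pathwise modulus of continuity in $\vartheta$ and slots directly into the moment machinery of Proposition~\ref{prop:moments}, whereas a chaining argument would require a Kolmogorov-type criterion whose moment demands grow with $\dim\Theta = p+\mathfrak{d}+1$, likely exceeding the ``order $1$'' control you invoke. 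Finally, your explicit identification step (equality case of $\log x - (x-1)\le 0$, shared Poisson base, Assumption~\ref{ass:identifiability}) is spelled out more completely than the paper's, which simply asserts that continuity of $V$ on the compact $\Theta$ together with the global maximum yields well-separation; do note, however, that the passage from equality of intensities to equality of the arguments of $\Phi_k$ needs injectivity of $\Phi_k$, which is only guaranteed under Assumption~\ref{ass:convexity} and is not among the hypotheses granted here, so that step should be rephrased to lean on the $L^1$-equicontinuity of $\vartheta\mapsto\lambda^{x,\infty}_0(\vartheta)$ as the paper does.
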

\begin{proof}
    Re-arranging the terms of the log-likelihood~\eqref{equ:loglkl_def}, one finds that $\hat{\vartheta}_T$ maximises the functional\begin{equation*}\Lambda_T(\vartheta,\vartheta^*)
    =
    \sum_{k=1}^K \frac{1}{T} \int_0^T  \lambda^T_{k,t}(\vartheta^*) \Big\{ \log \frac{\lambda^T_{k,t}(\vartheta)}{\lambda^T_{k,t}(\vartheta^*)} - \Big( \frac{\lambda^T_{k,t}(\vartheta)}{\lambda^T_{k,t}(\vartheta^*)} -1  \Big)\Big\} \dif t
        +
        \frac{1}{T} \int_0^T   \log \frac{\lambda^T_{k,t}(\vartheta)}{\lambda^T_{k,t}(\vartheta^*)} \dif M^T_{k,t}.
    \end{equation*}
    The martingale term  $\mathcal{M}^T_t(\vartheta)=T^{-1} \sum_{k}^K \int_0^t \log \lambda^T_{k,s}(\vartheta) -  \log \lambda^T_{k,s}(\vartheta^*) \dif M^T_{k,s} $ vanishes as $T \to \infty$ as a consequence of Proposition~\ref{prop:moments}, according to
    \begin{equation*}
     \mathbb{E}\bigl[ 
    \lvert
    \mathcal{M}^T_T (\vartheta)\rvert^2
    \bigr]
    \leq 
    \frac{2}{T^2} \sum_{k=1}^K
    \int_0^T 
    \mathbb{E}\Bigl[ \bigl(\ln(\lambda^T_{k,s}(\vartheta))^2 +\ln(\lambda^T_{k,s}(\vartheta^*)) ^2 \bigr) \lambda^T_{k,s}(\vartheta^* )
    \Bigr]
    \dif s
    = \mathcal{O}(\frac{1}{T}),
    \end{equation*}
    where the bound holds uniformly in $\vartheta$ in view of Proposition~\ref{prop:moments}. Similarly, for any $\vartheta \in \Theta$, the martingale $(\partial_{\vartheta} \mathcal{M}^T_t(\vartheta))$ with coordinates $\partial_{\vartheta_i} \mathcal{M}^T_t(\vartheta)= T^{-1} \sum_{k=1}^K \int_0^t \partial_{\vartheta_i} \lambda^T_{k,t}(\vartheta) \lambda^T_{k,t}(\vartheta)^{-1} \dif M^T_{k,s} $ obeys
    \begin{equation*}
        \mathbb{E}\bigl[\lVert \partial_{\vartheta_i} \mathcal{M}^T_T(\vartheta)\rVert_2^2\bigr]
        \leq 
        \frac{C}{T^2}
        \sum_{k=1}^K
        \int_0^T
        \mathbb{E}\bigl[ \partial_{\vartheta} \lambda^T_{k,s}(\vartheta)^2 \lambda^T_{k,s}(\vartheta^*)\bigr]
        \dif s
        =
        \mathcal{O}(\frac{1}{T}),
    \end{equation*}
  for some $C>0$, where we have used the fact that, under Assumption~\ref{ass:no_inhibition}, $\lambda^T_{k,t}(\vartheta)$ is bounded from below for any $k = 1 \hdots K$, so as to apply Proposition~\ref{prop:moments}. Do note that the preceding results extend uniformly in $t$ via Doob's maximal inequality. They extend uniformly in $\vartheta$ too via the regularity argument in~\cite[Proof of Lemma 3.10 p. 1808]{ClinetYoshida}, which produces some positive constant $C(\Theta)>0$ such that
    \begin{equation*}
    \mathbb{E}\bigl[\sup_{\vartheta \in \Theta}
        \lvert 
        \mathcal{M}^T_T(\vartheta)
        \rvert^2
        \bigr]
        \leq 
        C(\Theta)
        \Bigl( 
        \int_{\Theta}
            \mathbb{E}[\lvert \mathcal{M}^T_T(\vartheta) \rvert^2 ]
        \dif \vartheta
        +
        \int_{\Theta}
            \mathbb{E}[
            \lVert\partial_{ \vartheta }\mathcal{M}^T_T(\vartheta) \rVert_2^2]
        \dif \vartheta
        \Bigr).
    \end{equation*}
    This proceeds from Assumption~\ref{ass:integrability} and Sobolev's inequality, in regard to which we direct again to the proof of Lemma 3.10 in~\cite[p. 1808]{ClinetYoshida} and the references therein (specifically, to \cite[Theorem 4.12, Part I, case
A, p. 85]{sobolev}). Then, the remaining term, hence $\frac{1}{T}\Lambda_T(\vartheta,\vartheta^*)$ itself, converges uniformly in probability as $T \to \infty$ towards
    \begin{equation*}
    V(\vartheta)=\sum_{k=1}^K\int_0^1
        \mathbb{E}\Bigl[ \lambda^{x,\infty}_{k,0}(\vartheta^*) \Bigl\{ \ln\Big(  \frac{\lambda^{x,\infty}_{k,0}(\vartheta)}{\lambda^{x,\infty}_{k,0}(\vartheta^*) } \Big) - \Big( \frac{\lambda^{x,\infty}_{k,0}(\vartheta)}{\lambda^{x,\infty}_{k,0}(\vartheta^*) }-1\Big) \Bigr\}\Bigr] \dif x,
    \end{equation*}
    as per Proposition~\ref{prop:ergodicity}. It is clear from the elementary inequality $\ln(x) - (x-1) \leq 0$  that the limit function $V$ admits a null global maximum on $\Theta$ at $\vartheta=\vartheta^*$. For the \textsc{m}-estimator master Theorem to apply, we must verify that it is a well-separated in the sense of~\cite[Theorem 5.7]{VanDenVaartAsymptoticStatistics}. Since $\vartheta$ lives in the compact subset $\Theta = \Gamma \times \Xi$, over which $V$ admits a global maximum, it suffices that $V$ be continuous (hence uniformly continuous) over $\Theta$. Note to this effect that $x \mapsto \ln(x)-(x-1)$ is Lipschitz-continuous over the range of values of the $\lambda_{k,0}^{x,\infty} (\lambda_{k,0}^{x,\infty})^{-1}$, with $k=1\hdots K$, under Assumption~\ref{ass:no_inhibition}. Hence one needs only the uniform equi-continuity in $L^1(\Prob(\vartheta^*))$-norm of the family of random functions $\{ \vartheta \mapsto \lambda_{k,0}^{x,\infty}(\vartheta) \lvert x \in [0,1] \}$. For any $\vartheta,  \vartheta' \in \Theta$ and any $k=1 \hdots K$ then,
    \begin{equation*}
       \mathbb{E}\left[ \frac{\lambda_{k,0}^{x,\infty}(\vartheta) -\lambda_{k,0}^{x,\infty}(\vartheta') }{\lambda^{x,\infty}_{k,0}(\vartheta^*)} \right]
       \leq 
       \frac{\mathbb{E}\big[\lambda_{k,0}^{x,\infty}(\vartheta) 
       -
       \lambda_{k,0}^{x,\infty}(\vartheta')\big] }{\inf_{x \in [0,1]} \Phi_k\big[\mu_k(x,\vartheta^*)\big]},
    \end{equation*}
    where, by $1$-Lipschitz-continuity of $\Phi_k$, for any $k=1\hdots K$,
    \begin{align*}
        \mathbb{E}\big[\lambda_{k,0}^{x,\infty}(\vartheta) -\lambda_{k,0}^{x,\infty}(\vartheta')\big]
        &\leq 
        \lvert \mu_k(x,\vartheta) - \mu_k(x,\vartheta') \rvert
        \\
        &+
        \lvert g(x,\varpi) - g(x,\varpi') \rvert
        \sum_{l=1}^K \Big( \int_0^{\infty} \sup_{ \vartheta \in \Theta} \varphi_{kl}(s,\vartheta) \dif s\Big) \mathbb{E}[\lambda_{l,0}^{x,\infty}(\vartheta^*)]
        \\
        &+
        \sup_{(x,\varpi) \in [0,1] \times \Xi} g(x,\varpi)
        \sum_{l=1}^K
         \Big( \int_0^{\infty}  \varphi_{kl}(s,\vartheta)  - \varphi_{kl}(s,\vartheta') \dif s\Big) \mathbb{E}[\lambda_{l,0}^{x,\infty}(\vartheta^*)].
    \end{align*}
    Assumption~\ref{ass:integrability} guarantees the integrability of the $\varphi_{kl}$, and Proposition~\ref{prop:moments} the boundedness of $\mathbb{E}[\lambda^{x,\infty}_0(\vartheta^*)]$. The three terms hereinabove can then be made arbitrarily small for any close enough $\vartheta$ and $\vartheta'$ on account of the uniform equi-continuity of $g$ and $\mu$, ending the proof.
\end{proof}
\begin{remark}
    The result of Proposition~\ref{prop:consistent} supersedes Assumption \textup{\textbf{[\textsc{a}4]}} in~\cite{ClinetYoshida}, which does not appear in our discussion as a consequence. See in particular that the limit function $V$ in the proof of Proposition~\ref{prop:consistent} enjoys a well-separated zero as its global maximum.
\end{remark}
As stated above, Corollary~\ref{coro:true_clt} now deduces from \cite{ClinetYoshida}. In order to provide sufficient context for the proof of Theorem~\ref{tm:LRT_for_g_constant_or_not}, we will nonetheless retrace here, at least formally, the key steps of the proof of the asymptotic normality of $\hat{\vartheta}_T=(\hat{\eta}_T,\hat{\varpi}_T)$ in the setting of Corollary~\ref{coro:true_clt}. Let us first note, as in~\cite{OgataMLE}, that the score at the true parameter $\partial_{\vartheta} \mathcal{L}_T(\vartheta^*)$ has a martingale structure. For any $T>0$,
\begin{equation*}
 \bigl(\frac{1}{\sqrt{T}}\partial_{\vartheta_i}\mathcal{L}_T(\vartheta^*)\bigr)_i
 =
 \Bigl(
   \frac{1}{\sqrt{T}}
   \sum_{k=1}^K 
    \int_0^T \frac{\partial_{\vartheta_i}\lambda^T_{k,s}(\vartheta^*)}{\lambda^T_{k,s}(\vartheta^*)} \dif M^T_{k,s} \Bigr)_i,
\end{equation*}
which is indeed the value at $t=T$ of the martingale $(\textswab{M}^T_t)$ with values in $\R^{d+p+1}$, which coordinates read
    $(\textswab{M}^T_{i,t})= (T^{-\nicefrac{1}{2}}\sum_{k=1}^K \int_0^t H^{(i)}_{k,s} \dif M^T_{k,s})$, where the $(H^{(i)}_{k,s})=  (\partial_{\vartheta_i}\lambda^T_{k,s}(\vartheta^*)\lambda^T_{k,s}(\vartheta^*)^{-1})$ are  predictable, and square integrable under Assumption~\ref{ass:no_inhibition} and Proposition~\ref{prop:moments}. The martingale in question verifies
\begin{equation*}
    \langle \textswab{M}^T_{i,\cdot}, \textswab{M}^T_{j,\cdot}  \rangle_t
    =
    \frac{1}{T}
   \sum_{k=1}^K 
    \int_0^t \frac{\partial_{\vartheta_i}\lambda^T_{k,s}(\vartheta^*)\partial_{\vartheta_j}\lambda^T_{k,s}(\vartheta^*)}{\lambda^T_{k,s}(\vartheta^*)} \dif s,
\end{equation*}
where we have used the fact that $\langle M^T_{k,\cdot}, M^T_{l,\cdot} \rangle = 0 $ when $k \neq l$. We intend to apply the martingale central limit Theorem (precisely, the Lindeberg-Feller variation in~\cite[Theorem VIII.3-22]{jacodshy}) and require two sufficient conditions. Firstly, in view of Proposition~\ref{prop:ergodicity},  for any $u \in [0,1]$, the martingale $(\textswab{M}^T_t)$ obeys
\begin{equation}\label{equ:Fisher_appears}
     \langle \textswab{M}^T_{i,\cdot}, \textswab{M}^T_{j,\cdot}  \rangle_{Tu}
     \to 
     \sum_{k=1}^K
     \int_0^u 
     \mathbb{E}\left[ 
     \frac{\partial_{\vartheta_i}\lambda^{x,\infty}_{k,0}(\vartheta^*)\partial_{\vartheta_j}\lambda^{x,\infty}_{k,0}(\vartheta^*)}{\lambda^{x,\infty}_{k,0}(\vartheta^*)}  \right] \dif x =
     \sum_{k=1}^K \int_0^u I^{(k)}_{ij}(\vartheta^*) \dif x.
\end{equation}
Secondly, for any $\epsilon>0$ and $i=1\hdots (d+p+1)$ 
\begin{align*}
    \mathbb{E} \Big[  \frac{1}{T} \int_0^T   \frac{ \bigl( \partial_{\vartheta_i}\lambda^T_{k,s}(\vartheta^*) \bigr)^4}{\bigl( \lambda^T_{k,s} (\vartheta^*) \bigr)^2} 
    &\mathbb{1}_{ \Big\{ \frac{\lvert  \partial_{\vartheta_i} \lambda^T_{k,s}(\vartheta^*) \rvert^2}{\lambda^T_{k,s}(\vartheta^*) }> \sqrt{T} \epsilon   \Big\}} \dif N_{k,t}^T
    \Big]
    \\
 &\leq  \sup_{t \in [0,T]} \mathbb{E} \Bigl[ \frac{\bigl( \partial_{\vartheta_i}\lambda^T_{k,s}(\vartheta^*)\bigl)^8}{\lambda^T_{k,s}(\vartheta^*)^2 }  \Bigr]^\frac{1}{2}
 \int_0^T
    \Prob \Bigl[   \frac{ \lvert \partial_{\vartheta_i}\lambda^T_{k,t}(\vartheta^*) \rvert^2}{\inf_x \Phi_k[\mu_k(x)]} > \sqrt{T} \epsilon  \Bigr]^\frac{1}{2}  \frac{\dif t}{T} 
    \\
    &= \mathcal{O}\big( T^{-\frac{1}{2}}\big),
\end{align*}
where we have used  Cauchy-Schwarz' inequality together with Proposition~\ref{prop:moments} in the first line, Markov's inequality and Assumption~\ref{ass:no_inhibition} in the last.  Clearly, the preceding domination together with the elementary bound
\begin{equation*}
     \frac{\partial_{\vartheta_i}\lambda^T_{k,s}(\vartheta^*)\partial_{\vartheta_j}\lambda^T_{k,s}(\vartheta^*)}{\lambda^T_{k,s}(\vartheta^*)}
     \leq 
     \frac{1}{2}\Bigl( 
       \frac{\partial_{\vartheta_i}\lambda^T_{k,s}(\vartheta^*)^2}{\lambda^T_{k,s}(\vartheta^*)}
       +
       \frac{\partial_{\vartheta_i}\lambda^T_{k,s}(\vartheta^*)^2}{\lambda^T_{k,s}(\vartheta^*)}
     \Bigr)
\end{equation*}
satisfies the Lindeberg condition of the martingale \textsc{clt}. The process $T^{-\nicefrac{1}{2}} (\textswab{M}^T_t)$ then converges in the Skorokhod topology -- hence in finite dimensional distributions -- towards a continuous Gaussian process. In particular,  at $t=T$, one recovers
\begin{equation}\label{equ:score_is_normal}
    \frac{1}{\sqrt{T}}\partial_{\vartheta}\mathcal{L}_T(\vartheta^*) \xrightarrow[T \to \infty]{\mathcal{L}(\Prob(\vartheta^*))}\mathcal{N}(0,I(\vartheta^*)),
\end{equation}
with $I(\vartheta^*)$ defined in~\eqref{equ:Fisher_appears}. Expanding the score and inverting the second order derivative,
\begin{equation}\label{equ:inversion}
    \sqrt{T}( \vartheta_T- \vartheta^* ) = I_T(\bar{\vartheta}_T )^{-1} \frac{1}{\sqrt{T}} \partial_{\vartheta} \mathcal{L}_T(\vartheta^*) + o_{\Prob(\vartheta^*)}(1),
\end{equation}
where $\bar{\vartheta}_T$ is on the line joining $\vartheta^*$ to $\hat{\vartheta}_T$, and $I_T(\vartheta)$ is the matrix function 
\begin{align*}
    I_T(\vartheta)
    =
    \sum_{k=1}^K
    \frac{1}{T}
    \int_0^T 
    \Bigl( 
    \frac{\partial^{\otimes 2} \lambda_{k,s}^T(\vartheta)}{\lambda^T_{k,s}(\vartheta)}
    &-
    \frac{\partial^{\otimes 2}_{\vartheta} \lambda^T_{k,s}(\vartheta)}{\lambda^T_{k,s}(\vartheta^*)}
    -\frac{(\partial_{\vartheta} \lambda^T_{k,s}(\vartheta))^{\otimes 2}}{\lambda^T_{k,s}(\vartheta)^2}
    \Bigr) \lambda^T_{k,s}(\vartheta^*) \dif s
    \\
    &+
    \frac{1}{T}
    \int_0^T
    \frac{(\partial_{\vartheta} \lambda^T_{k,s}(\vartheta))^{\otimes 2}}{\lambda^T_{k,s}(\vartheta)^2}
    - \frac{\partial^{\otimes 2}_{\vartheta} \lambda^T_{k,s}(\vartheta)}{\lambda^T_{k,s}(\vartheta)} \dif M^T_{k,s},
\end{align*}
where we have re-employed the notation $\partial^{\otimes 2}$ for the Hessian matrix, and written $x^{\otimes 2} = x^{\top}x$.  For any $\vartheta \in \Theta$, the random matrix function $I_T(\vartheta)$ defined hereinabove decomposes as the sum of a functional of $(\lambda^T_t(\vartheta^*),\lambda^T_t(\vartheta),\partial_{\vartheta}\lambda^T_t(\vartheta),\partial^2_{\vartheta} \lambda^T_t(\vartheta))$ with a martingale.  Proceeding exactly as we have done in the proof of Proposition~\ref{prop:consistent}, that is, by a successive application of Doob's maximal inequality, Proposition~\ref{prop:moments}, and the Sobolev inequality of~\cite[proof of Lemma 3.10 p. 1808]{ClinetYoshida}, one finds the martingale term vanishes uniformly in $t$ and $\vartheta$ as $T \to \infty$. The first term is then subject to Proposition~\ref{prop:ergodicity}, whence it converge uniformly in $\vartheta$ towards 
\begin{equation*}
    I(\vartheta) = \sum_{k=1}^K \int_0^1  \Bigl(
    \frac{\partial^{\otimes 2} \lambda_{k,s}^{x,\infty}(\vartheta)}{\lambda^{x,\infty}_{k,s}(\vartheta)}
    - \frac{(\partial_{\vartheta} \lambda^{x,\infty}_{k,s}(\vartheta))^{\otimes 2}}{\lambda^{x,\infty}_{k,s}(\vartheta)^2}
    -
    \frac{\partial^{\otimes 2}_{\vartheta} \lambda^{x,\infty}_{k,s}(\vartheta)}{\lambda^{x,\infty}_{k,s}(\vartheta^*)}
    \Bigr) \lambda^{x,\infty}_{k,s}(\vartheta^*) \dif s
\end{equation*}
 in probability under $\Prob(\vartheta^*)=\Prob(\eta^*,g(\cdot,\varpi^*))$. See that the preceding notation coincides with~\eqref{equ:asymptotic_information} at $\vartheta=\vartheta^*$. Together with Proposition~\ref{prop:consistent}, one retrieves $I_T(\bar{\vartheta}_T) \to I(\vartheta^*)$ in probability as $T \to \infty$. Note also that this justifies the inversion in~\eqref{equ:inversion} under Assumption~\ref{ass:fisher_is_non_degenerate}, by virtue of the invertible matrices being an open set of $\mathcal{M}_{(p+d+1)}(\R)$. Slutsky's Lemma allows one to conclude, with
\begin{equation}\label{equ:tcl_is_here}
    \sqrt{T}(\hat{\vartheta}_T- \vartheta^*)
    \to \mathcal{N}(0,I(\vartheta^*)^{-1}),
\end{equation}
which is the desired convergence.

\subsection{Proof of Theorem~\ref{tm:LRT_for_g_constant_or_not}} Work under the parametric setting of Section~\ref{section:parametric}, still, and retain the notations $\vartheta^*=(\eta^*,\varpi^*)$ and $\Prob(\vartheta^*)$ for the true parameter and associated probability measure. The composite hypothesis 
\begin{equation*}
    \Xi_\mathfrak{d}^0= \bigl\{ (\varpi_i) \in \Xi^d
    \vert 
\varpi_0 = \cdots = \varpi_{\mathfrak{d}-1}  
    \bigr\}
\end{equation*}
rephrases as a simple hypothesis by introducing a nuisance parameter $\nu>0$, with
\begin{align*}
    \varpi_{\mathfrak{d}}=\nu, ~ \:
    \varpi_{\mathfrak{d}-1} = \nu + \delta_{d-1}, ~ \:
    \hdots, ~ \:
    \varpi_{0} = \nu + \delta_0,
\end{align*}
where $(\delta_i) \in \R^{\mathfrak{d}}$, in such manner that 
\begin{equation*}
    \Xi_\mathfrak{d}^0 \equiv \bigl\{ 
    \delta_0 = \cdots = \delta_{\mathfrak{d}-1}=0  
    \bigr\}.
\end{equation*}
 The model is then equivalently parametrized in terms of $(\eta,\nu,\delta)$ with the two parameter spaces indeed mapping to the same model, and their correspondence given by $ \varpi  = G(\nu,\delta^\top)^\top$, where $G \in \mathcal{M}_{\mathfrak{d}+1}(\R)$ is the invertible matrix
\begin{equation*}
    G = \begin{bmatrix}
       \boldsymbol{Id}_{\mathfrak{d}} &\begin{matrix} 1 \\
       \vdots \\
           1
       \end{matrix} \\
       \begin{matrix}
           0 & \hdots &0 
       \end{matrix}
       &
       \begin{matrix}
           1
       \end{matrix}
    \end{bmatrix}.
\end{equation*}
 The score and the Fisher Information of the two equivalent models then coincide up to some invertible multiplicative factors $E \in \mathcal{M}_{p+\mathfrak{d}+1}(\R)$ of the form
\begin{equation*}
    E = \begin{bmatrix}
       \boldsymbol{Id}_p&0 \\
        0 &   G^{-1} 
    \end{bmatrix},
\end{equation*}
which do not intervene in the limit distribution of $\Lambda^{\mathfrak{d}}_T$. 
Up to a linear re-parametrization, one may therefore suppose without loss of generality that  $(\vartheta_{p+1},\hdots,\vartheta_{p+\mathfrak{d}+1})$ lives in a compact set of $\R^{\mathfrak{d}}$, which non empty interior contains $0$, and the result of Theorem~\ref{tm:LRT_for_g_constant_or_not} proven for null hypotheses of the type  $\vartheta_{p+1} = \cdots \vartheta_{p+\mathfrak{d}-1}=0$ instead. The resulting setting is similar in every point to the standard case in~\cite[section 16.2]{VanDenVaartAsymptoticStatistics}, and the same arguments apply. Namely, introducing the sparse \textsc{mle},
\begin{equation*}
    \hat{\vartheta}^0_T 
    = \argmax_{\{ \vartheta_{p+1}= \cdots = \vartheta_{p+\mathfrak{d}+1} =0 \}} \mathcal{L}_T(\vartheta).
\end{equation*}
One has at any $T>0$ 
\begin{align}
    \mathcal{L}_T(\hat{\vartheta}_T)
    -
     \mathcal{L}_T(\hat{\vartheta}_T^0)
    &=  
    \sqrt{T}  (
    \hat{\vartheta}_T
    -
   \hat{\vartheta}_T^0)^{\top}
    \frac{1}{T}
    \partial^{\otimes 2}_{\vartheta} \mathcal{L}_T(\vartheta^*) \sqrt{T}  (
    \hat{\vartheta}_T
    -
   \hat{\vartheta}_T^0)
    +o_{\Prob(\vartheta^*)}(1),
    \label{equ:vanderwaartexpansion}
\end{align}
where we have cancelled the first order term on account of the gradient of $\mathcal{L}_T$ vanishing at $\hat{\vartheta}_T$ under the Assumptions of Corollary~\ref{coro:true_clt}.  Furthermore, expanding the score,
\begin{equation*}
    \sqrt{T}(\hat{\vartheta}_T - \vartheta^*) 
    =
     I(\vartheta^*)^{-1}  
    \frac{1}{\sqrt{T}}\partial_{\vartheta} 
    \mathcal{L}_T(\vartheta^*) 
    + o_{\Prob(\vartheta^*)}(1),
\end{equation*}
and, likewise, 
\begin{equation*}
    \sqrt{T} (
    \hat{\vartheta}^0_T
    -
   \vartheta^*)_{\leq p}
   =
    (I(\vartheta^*)_{\leq p, \leq p})^{-1}
    (\frac{1}{\sqrt{T}} \partial_{\vartheta}
    \mathcal{L}_T(\vartheta^*) )_{\leq p}
    +
    o_{\Prob(\vartheta^*)}(1),
\end{equation*}
where, for any $A \in \mathcal{M}_{p+\mathfrak{d}+1}(\R)$, $A_{\leq p,\leq p}$ denotes the $p$-th first principal submatrix of $A$, and the same notation readily extends to vectors of $\mathbb{R}^{p+\mathfrak{d}+1}$. With the Schur formula
\begin{equation*}
    (I(\vartheta^*)^{-1})_{\leq p \leq p} = (I(\vartheta^*)_{\leq p \leq p})^{-1} I(\vartheta^*)_{\leq p > p}(I(\vartheta^*)^{-1})_{>p >p}I(\vartheta^*)_{\leq p > p}^\top (I(\vartheta^*)_{\leq p \leq p})^{-1} + (I(\vartheta^*)_{\leq p \leq p})^{-1},
\end{equation*}
and elementary matrix manipulation, one finds that the two \textsc{mle}s are related via
\begin{equation*}
    \sqrt{T}(\hat{\vartheta}^0_T-\hat{\vartheta}_T )=( I(\vartheta^*)_{\leq p \leq p})^{-1}  I(\vartheta^*)_{\leq p > p} \sqrt{T}\hat{\vartheta}_{ >p} + o_{\Prob(\vartheta^*)}(1),
\end{equation*} and that
expression~\eqref{equ:vanderwaartexpansion} is asymptotically equivalent to 
\begin{equation*} 
   ( \sqrt{T}\hat{\vartheta}_T )_{>p}^{\top}\Bigl( \bigl(I({\vartheta^*}) ^{-1}\bigr)_{>p,>p}\Bigr)^{-1}
     ( \sqrt{T}\hat{\vartheta}_T)_{>p},
\end{equation*}
which converges weakly towards a $\chi^2(k)$ law where $k=p+\mathfrak{d}+1-(p+1)=\mathfrak{d}$ under any $\Prob(\vartheta^*)$ with $\vartheta^*$ in the null hypothesis $\vartheta^*_{>p} = (0 \hdots 0)^\top$ as per Corollary~\ref{coro:true_clt}. This ends our proof of Theorem~\ref{tm:LRT_for_g_constant_or_not}. We now continue towards the proof of our technical Propositions, which are set in the general setting hence we retire the parametric notation $\lambda^T_t(\vartheta)=\lambda^T_t(\eta,g(\cdot,\varpi))$ in favour of the general one. 
\subsection{Proof of Proposition~\ref{prop:moments}}\label{section:proof_of_moments}
Define for any $\vartheta=(\eta,\varpi) \in \Theta$ the bounding kernels
\begin{align}
\bar{\varphi}(t,\vartheta) 
&=
\sup_{x \in [0,1]} g(x,\varpi)\bar{\varphi}(t,\eta)
\nonumber
\\
\bar{\varphi}^*(t) &= \sup_{x \in [0,1]}g^*(x) \varphi(t,\eta^*)
\label{equ:bar_phi}
\end{align}
and the associated resolvents
\begin{align*}
\bar{\Psi}^*
    &=
   \sum_{k=1}^{\infty} (\bar{\varphi}^*)^{\star k}\\
   \bar{\Psi}^\Theta
   &=
  (\sup_{\vartheta \in \Theta} \bar{\varphi}_{kl}(\cdot,\vartheta))
  +
  (\sup_{\vartheta \in \Theta} \bar{\varphi}_{kl}(\cdot,\vartheta) ) \star \bar{\Psi}^*.
\end{align*}
In particular, wherever the parametric specification~\eqref{equ:well_param} of Section~\ref{section:parametric} applies, one has the simplification $\bar{\varphi}^*=\bar{\varphi}(\cdot,\vartheta^*)$. The well-posedness of $\bar{\Psi}^*$ and $\bar{\Psi}^\Theta$ results from the stability Assumption~\ref{ass:stability} and Young's convolution inequality. Additional details can for instance be found in~\cite[Lemma 3]{bacrylimit} or~\cite{gripenberg}.
\begin{Lemma}\label{Lemma:jaisson_like}
For any $T>0$, any $t \in [0,T]$ and any $\vartheta =(\eta,\varpi)\in \Theta$,
    \begin{equation*}
        \lambda_t^T(\vartheta)
        \leq 
        \Phi[\mu\big( \frac{t}{T},\eta \big)]
        +
        \int_0^t \bar{\Psi}^{\Theta}(t-s)
        \Phi[\mu( \frac{s}{T},\eta^* \big)]
        \dif s
        +
        \int_0^t \bar{\Psi}^{\Theta}(t-s)
        \dif M^T_s
    \end{equation*}
    coordinate-wise. 
\end{Lemma}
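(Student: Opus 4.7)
The plan is a locally stationary adaptation of the standard Jaisson--Rosenbaum decomposition, proceeding in two stages. In the first stage, Assumption~\ref{ass:lipschitz} combined with the non-negativity of $\mu_k$, $g$, and $\varphi_{kl}$ yields the elementary inequality $\Phi_k(\mu_k + y) \leq \Phi_k(\mu_k) + y$ for any $y \geq 0$. Applied pointwise to both $\lambda^T_t(\vartheta)$ and the true intensity $\lambda^T_t(\eta^*, g^*)$, this produces the coordinate-wise pre-bounds
\begin{align*}
\lambda^T_t(\vartheta) &\leq \Phi[\mu(\tfrac{t}{T}, \eta)] + \int_0^t K^{\Theta}(t-s) \, dN^T_s, \\
\lambda^T_s(\eta^*, g^*) &\leq \Phi[\mu(\tfrac{s}{T}, \eta^*)] + \int_0^s \bar{\varphi}^*(s-u) \, dN^T_u,
\end{align*}
where $K^{\Theta} := \sup_{\vartheta \in \Theta} \bar{\varphi}(\cdot, \vartheta)$ and $\bar{\varphi}^*$ is as in~\eqref{equ:bar_phi}.

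In the second stage I would substitute the Doob--Meyer decomposition $dN^T_u = \lambda^T_u(\eta^*, g^*) \, du + dM^T_u$ into the second pre-bound and iterate $n$ times by Fubini, obtaining
\begin{equation*}
\lambda^T_s(\eta^*, g^*) \leq \Phi[\mu(\tfrac{s}{T}, \eta^*)] + \int_0^s S_n(s-u) \Phi[\mu(\tfrac{u}{T}, \eta^*)] \, du + \int_0^s S_n(s-u) \, dM^T_u + R_n(s),
\end{equation*}
with $S_n := \sum_{k=1}^n (\bar{\varphi}^*)^{\star k}$ and remainder $R_n(s) := \int_0^s (\bar{\varphi}^*)^{\star(n+1)}(s-v) \lambda^T_v(\eta^*, g^*) \, dv$. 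Assumption~\ref{ass:stability} together with Young's convolution inequality force $\lVert (\bar{\varphi}^*)^{\star k} \rVert_{L^1}$ to decay geometrically, so $S_n \nearrow \bar{\Psi}^*$ monotonically; combined with the standard boundedness of $\mathbb{E}[\lambda^T_v(\eta^*, g^*)]$ on $[0, T]$ for a subcritical Hawkes process, this forces $\mathbb{E}[R_n] \to 0$, whence $R_n \to 0$ almost surely along a subsequence. Itô isometry delivers the $L^2$ convergence of the martingale partial sums to $\int_0^s \bar{\Psi}^*(s-u) \, dM^T_u$.

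Finally, plugging the resulting bound for $\lambda^T_s(\eta^*, g^*)$ into the Doob--Meyer expansion of the first pre-bound and regrouping the resulting double convolutions via Fubini, the combined drift and martingale kernels both take the form $K^{\Theta} + K^{\Theta} \star \bar{\Psi}^*$, which equals $\bar{\Psi}^{\Theta}$ by definition. This yields the claimed inequality. The main technical obstacle lies in controlling the iteration limit --- the vanishing of $R_n$ and the Fubini interchange between stochastic integration against $dM^T$ and deterministic convolution --- both handled via the subcriticality Assumption~\ref{ass:stability} and classical arguments as in~\cite{gripenberg, bacrylimit}.
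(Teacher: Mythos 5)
Your proposal is correct and follows essentially the same route as the paper: bound $\lambda^T_t(\vartheta)$ and the true intensity $\lambda^T_t(\eta^*,g^*)$ via Lipschitz continuity of $\Phi$, separate the Doob--Meyer martingale and compensator parts, resolve the resulting renewal inequality at the true parameter into a $\bar{\Psi}^*$-kernel bound, then plug this into the general pre-bound and regroup by Fubini to recognize $\bar{\Psi}^\Theta = K^\Theta + K^\Theta \star \bar{\Psi}^*$. Where the paper simply asserts that the renewal inequation ``resolves into'' the $\bar{\Psi}^*$ form, you spell out the iteration, the $L^1$ vanishing of the remainder $R_n$ under Assumption~\ref{ass:stability}, and the $L^2$ convergence of the martingale partial sums --- a fuller justification of the same step rather than a different argument.
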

\begin{proof}
    The proof is similar to~\cite{JaissonRosenbaum}. By Lipschitz-continuity of the $\Phi_k$, the true intensity $(\lambda^T_t(\vartheta^*,g^*))$ verifies for any $0 \leq t \leq T$ and $k=1\hdots K$
    \begin{align*}
        \lambda^T_{k,t}(\eta^*,g^*)
        & \leq 
        \Phi_k\big[\mu_k(\frac{t}{T},\eta^*)\big]
        +
        \sum_{l=1}^K \int_0^t \bar{\varphi}_{kl}^*(t-s)
         \dif N^T_{l,s}.
    \end{align*}
    Separating the martingale and finite variation part in $\dif N^T_s$, this yields the renewal inequation
    \begin{equation}\label{equ:first_appearance_of_renewal}
        \lambda^T_t(\eta^*,g^*)
        \leq 
        \Phi\big[\mu(\frac{t}{T},\eta^*)\big]
        +
        \int_0^t \bar{\varphi}^*(t-s)
        \lambda_t^T(\eta^*,g^*) \dif s
        +
        \int_0^t\bar{\varphi}^*(t-s)
         \dif M^T_s,
    \end{equation}
    which holds coordinate-wise and resolves into
    \begin{equation}\label{equ:jaisson_form}
        \lambda^T_t(\eta^*,g^*)
        \leq 
         \Phi\bigl[\mu(\frac{t}{T},\eta^*\big)\bigr]
        +
        \int_0^t \bar{\Psi}^*(t-s)
        \mu( \frac{s}{T},\eta^* \big)
        \dif s
        +
        \int_0^t \bar{\Psi}^*(t-s)
        \dif M^T_s,
    \end{equation}
    where we have used the fact that $\bar{\varphi}^*+\bar{\varphi}^* \star \bar{\Psi}^*=\bar{\Psi}^*$.  The general case $\vartheta \in \Theta$ follows by writing
    \begin{align*}
        \lambda^T_t(\eta,g(\cdot,\varpi))
        &\leq 
       \Phi[ \mu(\frac{t}{T},\vartheta)]        +
        \int_0^t
            \sup_{\eta \in \Gamma} \bar{\varphi}(t-s,\eta)
        \lambda^T_s(\eta^*,g^*) 
        \dif s
        +
        \int_0^t 
             \sup_{\eta \in \Gamma} \bar{\varphi}(t-s,\eta)
        \dif M^T_s,
    \end{align*}
    where the suprema are understood in the coordinate-wise sense. It then suffices to re-insert the inequality obtained at $(\eta,g)=(\eta^*,g^*)$ in the expression hereinabove, and apply Fubini's Theorem. 
\end{proof}

\begin{Lemma}\label{lemma:recursive_moments}
    Work under Assumptions~\ref{ass:lipschitz} and~\ref{ass:stability}. Let $p \in \mathbb{N}$ and $q=2^p$. If $\sup_{\vartheta} \varphi(\cdot,\vartheta)  \in L^{q}[0,\infty)$, then,
    \begin{equation*}
    \sup_{T>0} \sup_{t \in [0,T]} 
        \mathbb{E}\big[
        \lVert 
        \sup_{\vartheta \in \Theta}
        \lambda^T_t
        (\vartheta)
        \rVert^{q}_{q}
        \bigr]
        < \infty.
    \end{equation*}
\end{Lemma}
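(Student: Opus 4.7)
The plan is to proceed by induction on $p$, using Lemma~\ref{Lemma:jaisson_like} as the fundamental tool to separate a deterministic from a martingale contribution in the intensity.

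For the base case $p=0$ (so $q=1$), the statement follows immediately upon taking expectations in Lemma~\ref{Lemma:jaisson_like}: the compensated integral vanishes in expectation, and the two remaining terms are uniformly bounded in $t$ and $T$ by the boundedness of $\Phi\circ\mu$ on the compact set $[0,1]\times\Gamma$ combined with the $L^1$ integrability of $\bar\Psi^\Theta$ (the latter being a consequence of the stability Assumption~\ref{ass:stability} and Young's convolution inequality, as invoked in the preamble to Lemma~\ref{Lemma:jaisson_like}). Uniformity in $\vartheta\in\Theta$ is automatic since only the leading term on the right-hand side of Lemma~\ref{Lemma:jaisson_like} depends on $\vartheta$, and it is continuous on the compact $\Theta$.

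For the induction step, suppose the result holds at level $q/2=2^{p-1}$, and that $\sup_\vartheta\varphi(\cdot,\vartheta)\in L^q[0,\infty)$ with $q=2^p$. Raising the inequality of Lemma~\ref{Lemma:jaisson_like} to the $q$-th power coordinate-wise and applying $(a+b+c)^q\leq 3^{q-1}(a^q+b^q+c^q)$ reduces the problem to controlling $\mathbb{E}[|\int_0^t\bar\Psi^\Theta(t-s)\,dM^T_s|^q]$. For this one invokes the Burkholder-Davis-Gundy inequality for a discontinuous martingale driven by a point process of intensity $\lambda^T_s(\eta^*,g^*)$, which yields a bound of the form
\[
C_q\left(\mathbb{E}\left[\left(\int_0^t|\bar\Psi^\Theta(t-s)|^2\lambda^T_s(\eta^*,g^*)\,ds\right)^{q/2}\right]+\mathbb{E}\left[\int_0^t|\bar\Psi^\Theta(t-s)|^q\lambda^T_s(\eta^*,g^*)\,ds\right]\right).
\]
Minkowski's integral inequality, applicable since $q/2\geq 1$, then permits one to interchange expectations and integrations, leading to quantities controlled by $\mathbb{E}[\lambda^T_s(\eta^*,g^*)^{q/2}]$ and $\mathbb{E}[\lambda^T_s(\eta^*,g^*)]$. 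Both are uniformly finite by the induction hypothesis, since $\lambda^T_s(\eta^*,g^*)\leq\sup_\vartheta\lambda^T_s(\vartheta)$. The remaining deterministic integrals $\int_0^t|\bar\Psi^\Theta(t-s)|^r\,ds$ with $r\in\{2,q\}$ are uniformly bounded in $t,T$ because, by interpolation between the assumed $L^q$ integrability and the $L^1$ integrability of $\bar\Psi^\Theta$, one has $\bar\Psi^\Theta\in L^r$ for every $r\in[1,q]$.

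The principal technical obstacle will be the choice of a BDG-type inequality compatible with the jump structure of the point-process martingale, and the careful tracking of constants so as to preserve uniformity in $T$ and $t\in[0,T]$. Once this point is settled and the kernel integrability established by interpolation, the induction closes automatically; the interchange of the supremum over $\vartheta$ with integration is legitimate since the right-hand side in Lemma~\ref{Lemma:jaisson_like} dominates $\lambda^T_t(\vartheta)$ by a quantity whose $\vartheta$-dependence lives in a single continuous function on the compact $\Theta$.
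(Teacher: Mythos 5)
Your argument is correct, and it reaches the conclusion by a valid route, but it differs from the paper's proof in the choice of maximal inequality. The paper applies the Burkholder–Davis–Gundy inequality in its standard form, yielding
\[
\mathbb{E}\Bigl[\bigl\lvert\textstyle\int_0^t\psi(t-s)\,\dif M^T_s\bigr\rvert^{2^{p+1}}\Bigr]\;\leq\;C_p\,\mathbb{E}\Bigl[\bigl\lvert\textstyle\int_0^t\psi(t-s)^2\,\dif N^T_s\bigr\rvert^{2^{p}}\Bigr],
\]
that is, with the \emph{optional} quadratic variation $\int\psi^2\,\dif N^T$. It then splits $\dif N^T=\dif M^T+\lambda^T\,\dif s$, controls the compensator piece by Jensen's inequality, and recognises the remaining martingale piece as a stochastic integral of $\psi^2\in L^{2^p}$, closing the induction. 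In contrast, the two-term bound you write down,
\[
C_q\Bigl(\mathbb{E}\Bigl[\bigl(\textstyle\int_0^t\lvert\bar\Psi^\Theta(t-s)\rvert^2\lambda^T_s\,\dif s\bigr)^{q/2}\Bigr]+\mathbb{E}\Bigl[\textstyle\int_0^t\lvert\bar\Psi^\Theta(t-s)\rvert^q\lambda^T_s\,\dif s\Bigr]\Bigr),
\]
is not BDG proper but the Kunita / Bichteler--Jacod (Rosenthal-type) inequality, which trades the optional quadratic variation for the \emph{predictable} one at the cost of the extra $q$-th-moment term. This avoids the recursive martingale integral that reappears in the paper's decomposition, and the induction then closes via Minkowski's integral inequality on the first term and Fubini on the second; both require only the induction hypothesis at level $q/2$ at the true parameter together with interpolation for $\bar\Psi^\Theta\in L^r$, $r\in[1,q]$, exactly as the paper uses Riesz interpolation. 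The overall trade-off is thus a marginally sharper-looking maximal inequality in exchange for a shorter decomposition step; both are standard and both preserve uniformity in $(t,T)$ and $\vartheta$, so the argument is sound. The only nit is the attribution: you should call the two-term bound a Rosenthal or Kunita inequality rather than BDG, since BDG controls moments through $[M]$, not $\langle M\rangle$.
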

\begin{proof}
    In view of Lemma~\ref{Lemma:jaisson_like}, a sufficient condition is that, for any $\psi=(\psi_{kl}) \in L^1[0,\infty) \cap L^{2^p}[0,\infty)$     \begin{equation*}
        \mathbb{E}\Big[  \big\lVert \int_0^t \psi(t-s) \dif M^T_s \big\rVert^{2^p}_{2^p} \Big]
       =
       \sum_{k=1}^K
       \sum_{l=1}^K
       \mathbb{E}\Bigl[
       \bigl\lvert \int_0^t \psi_{kl} (t-s) \dif M^T_{l,s} \bigr\rvert^{2^p}     
       \Bigr] <\infty.
    \end{equation*}
     Recalling that $\bar{\Psi}^\Theta$ enjoys the same integrability properties as $\sup_{\vartheta} \varphi(\cdot,\eta)$,  the Lemma  indeed follows from the case $\psi= \bar{\Psi}^\Theta$. Let  $\psi \in L^1[0,\infty)$. We proceed by induction. At $p=0$, see that  
     \begin{equation*}
         \sum_{k=1}^K\sum_{l=1}^K \mathbb{E}[ \lvert \int_0^t \psi_{kl}(t-s) \dif M^T_{l,s} \rvert ]  
         \leq 
         2 \sum_{k=1}^K \sum_{l=1}^K \int_0^t \lvert \psi_{kl}(t-s) \rvert  \mathbb{E}[\lambda^T_{l,s}(\vartheta^*,g^*)] \dif s. 
     \end{equation*}
      By Lemma~\ref{Lemma:jaisson_like}, $(\lambda^T_{t}(\vartheta^*,g^*))$ obeys the uniform bound
      \begin{equation*}
         \sum_{k=1}^K 
         \mathbb{E}[ \lambda^T_{k,t}(\eta^*,g^*)]
         \leq 
        \sum_{k=1}^K  \sup_{x \in [0,1]}\Phi_k[\mu_k(x,\eta^*)]
        +
        \sum_{k=1}^K  \sum_{l=1}^K \Bigl( \int_0^{\infty} \bar{\Psi}^*_{kl}(s) \dif s \Bigr) \sup_{x \in [0,1]}\Phi_l[\mu_l(x,\eta^*)],
      \end{equation*}
      thus the initial case $p=0$. Now, grant the property at some $p \geq 1$, and suppose that $\psi \in L^{q} [0,\infty)$ where $q=2^{p+1}$. For any $T>0$ and $ t \in [0,T]$, the process $(\int_0^x \psi(t-s) \dif M^T_s)_{x \geq 0}$ is a martingale and, by the  Burkholder-Davis-Gundy (\textsc{bdg}) inequality (see~\cite{kühn2023maximalinequalitiesapplications}),
    \begin{equation*}
        \sum_{k=1}^K  \sum_{l=1}^K  \mathbb{E}\Big[\big\lvert\int_0^t \psi_ {kl}(t-s) \dif M^T_{l,s} \big\rvert^{2^{p+1}} \Big]
        \leq 
       C_p
        \sum_{k=1}^K 
         \sum_{l=1}^K 
       \mathbb{E}\Big[\big\lvert\int_0^t \psi_{kl}(t-s)^2 \dif N^T_{l,s} \big\rvert^{2^{p}} \Big]
    \end{equation*}
    for some $C_p >0$. Separating the finite variation and martingale terms in the random measure $\dif N^T_s$ the preceding estimate is itself bounded by
    \begin{equation*}
        C'_p
        \mathbb{E}\Big[
         \sum_{k=1}^K \big\lvert
         \int_0^t
            \psi_{kl}(t-s)^2\lambda^T_{l,s} (\vartheta^*,g^*)
         \dif s
        \big\rvert^{2^{p}} 
        \Big]
        +
        C'_p
        \mathbb{E}\Big[
         \sum_{k=1}^K \big\lvert
         \int_0^t 
            \psi_{kl}(t-s)^2 
        \dif M^T_{l,s} \big\rvert^{2^{p}},
        \Big]
    \end{equation*}
    where $C'_p=2^{2^ p-1} C_p$. The second term hereinabove is finite as per our induction hypothesis, since we have supposed that $\psi \in L^{2^{p+1}}[0,\infty)$, so that $\psi_{kl}^{2} \in L^{2^p}[0,\infty)$ for any $k,l= 1\hdots K$. Up to some multiplicative constant, the first term is bounded by
    \begin{equation*}
        \sum_{k=1}^K \sum_{k=1}^K \Big (\int_0^{\infty} \psi_{kl}^2(s) \dif s \Big)^{2^p}
        \sup_{T>0} \sup_ {t \in [0,T]}\mathbb{E}\Big[ \lvert\lambda^T_t(\eta^*,g^*) \rvert^{2^p}\Big],
    \end{equation*}
    where we have used Jensen's inequality with the normalization $(\int_0^{t}  \psi_{kl} ^2(s) \dif s )$ within each summand. As $\psi \in L^1[0,\infty) \cap L^q[0,\infty)$ with $q\geq2$, Riesz' interpolation Theorem ensures that $\psi \in L^2[0,\infty)$ and the multiplicative constant in our last bound is indeed finite. Lastly, $\mathbb{E}\big[ \lvert\lambda^T_t(\vartheta^*,g^*) \rvert^{2^p}]$ is uniformly bounded under our induction hypothesis as a consequence of Lemma~\ref{Lemma:jaisson_like}, ending the proof.
\end{proof}
\begin{remark}\label{remark:extend_to_derivatives}
    Extending Lemma~\ref{lemma:recursive_moments} to the derivatives of $\lambda_t(\vartheta)$ according to $\vartheta$ is straightforward. See to this end that, for any $k=1\hdots K$, $\Phi_k$ is smooth, strictly convex and Lipschitz-continuous. Its first derivative is then bounded, and its higher derivatives go to $0$ away from $0$, hence they are bounded too. As a result, the bounding of the moments of $\partial_{\vartheta} \lambda^T_t$ reduces to bounding
\begin{equation*}
\mathbb{E}\Big[ \big\lVert  
    \partial_{\vartheta} \mu_k\big( \frac{t}{T},\vartheta \big) 
    +
    \sum_{l=1}\int_0^t \partial_{\vartheta,\varpi}  \Big( g\big( \frac{t}{T}, \varpi \big)\varphi_{kl}(t-s,\vartheta) \Big)   \dif N^T_s\big\rVert^q_q
    \Big].
\end{equation*}
This indeed follows from the exact same arguments as for  $\lambda^T_t(\vartheta)$ itself, provided that the derivatives of $\mu,g$ and $\varphi$ satisfy the same regularity and integrability properties as  $\mu,g$ and $\varphi$ themselves, which Assumption~\ref{ass:integrability} ensures.
\end{remark}

Under Assumption~\ref{ass:integrability}, Proposition~\ref{prop:moments} follows readily from Lemma~\ref{lemma:recursive_moments} and Remark~\ref{remark:extend_to_derivatives}. We may proceed to the proof of Proposition~\ref{prop:ergodicity}.

\subsection{Uniform convergence of the stationary Hawkes processes}\label{section:proof_of_ergodicty}
Recall that we have defined in Section~\ref{section:introduction} a collection indexed on $[0,1]$ of stationary Hawkes processes  $(\boldsymbol{N}^{x,\infty}_t)$ with null initial condition, all imbedded into the same Poisson base as $(\boldsymbol{N}^T_t)$, each with respective intensity
\begin{equation*}
    (\lambda^{x,\infty}_{k,t}(\eta^*,g^*))
    =
   \big( \Phi_k \big[ 
    \mu_k(x,\eta^*)
    +
    \sum_{l=1}^K
    \int_{-\infty}^{t}
    g^*(x)
    \varphi_{kl}(t-s,\eta^*)
    \dif N_{l,s}^{x,\infty}
    \big] \big),
    \hspace{0.1cm}
    k=1\hdots K.
\end{equation*}
We reproduce here, mainly from~\cite{bremaud1996stability}, some known properties of the $(\boldsymbol{N}^{x,\infty}_t)$. We denote by $(\textfrak{M},B(\textfrak{M}))$ the locally finite Borel measures over $\R$ equipped with their vague topology. Recall also from Subsection~\ref{section:organisation} the notation $C_p(\R^K)$ and $C_{\uparrow}(\R^K)$ as borrowed from~\cite{ClinetYoshida}, and write $\theta_x \colon \textfrak{M} \mapsto \textfrak{M}$ for the shift operator over $\textfrak{M}$ defined by $\theta_x  m(A) = m(A-x)$ for any $A \in B(\R)$.

\begin{Lemma}\label{Lemma:stationary_ergodicity}
    Under Assumptions~\ref{ass:lipschitz} to~\ref{ass:integrability}, for any $x \in [0,1]$, any $\eta \in \Gamma$ and any $g \in C[0,1]$, $(\lambda^{x,\infty}_t(\eta,g))$ is $C_p(\R^K)$-ergodic in the sense that, for any $\psi \in C_p(\R^K)$,
    \begin{equation*}
        \frac{1}{T} \int_0^T \psi(\lambda_s^{x,\infty}(\eta,g)) \dif s
        \to 
        \mathbb{E}[\psi(\lambda^{x,\infty}_0(\eta,g)]
    \end{equation*}
    in probability under $\Prob(\eta^*,g^*)$ as $T \to \infty$.
\end{Lemma}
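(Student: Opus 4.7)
The plan is to derive the stated $C_p$-ergodicity from two separate ingredients: the time-shift ergodicity of the underlying Poisson base on which $(\boldsymbol{N}^{x,\infty}_t)$ is embedded, and a uniform integrability estimate controlling the polynomial growth of $\psi$.

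First, I would argue stationarity and ergodicity of the shift. Under $\Prob(\eta^*,g^*)$, the processes $(\pi_k)$ used in the embedding of Proposition~\ref{prop:embedding} are independent stationary Poisson random measures on $\R^2$ with unit intensity, hence the collective shift $\theta_t \colon (\pi_k) \mapsto (\theta_t\pi_k)$ is measure-preserving and, by the classical Fock-space computation of correlations, mixing on $(\Omega,\sigma(\pi_1,\hdots,\pi_K))$. By~\cite{bremaud1996stability}, the stability Assumption~\ref{ass:stability} (which applies a fortiori here because $\sup g \leq \sup g^*$ is ensured through the compactness of $\Gamma$ together with Assumption~\ref{ass:stability}, modulo a harmless enlargement) yields a unique $(\mathcal{F}^{\pi}_t)$-adapted stationary solution $(\boldsymbol{N}^{x,\infty}_t)$ of the fixed-point intensity equation, obtained as the limit of Picard iterates whose $n$-th step depends only on $\pi$ restricted to $(-\infty,t]\times\R_+$. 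In particular there exists a measurable map $F \colon \textfrak{M}^K \mapsto \R^K$ such that $\lambda^{x,\infty}_t(\eta,g) = F(\theta_t\pi)$ almost surely, which makes $(\lambda^{x,\infty}_t(\eta,g))$ a stationary process on a mixing base.

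Next, I apply Birkhoff's ergodic theorem to the stationary process $(\psi(\lambda^{x,\infty}_t(\eta,g)))_{t \in \R}$. For any bounded continuous $\psi$, this immediately produces the almost sure, and in particular $L^1$, convergence to $\mathbb{E}[\psi(\lambda^{x,\infty}_0(\eta,g))]$. To handle the general case $\psi \in C_p(\R^K)$, I would truncate: for $R>0$ let $\psi_R(x) = \psi(x) \mathbb{1}_{\lVert x \rVert \leq R}$ smoothed into a continuous bounded function agreeing with $\psi$ on $\{\lVert x \rVert \leq R\}$. Birkhoff applies to $\psi_R$, so it remains to control the residual terms uniformly in $T$, which reduces to a uniform integrability statement on $\psi(\lambda^{x,\infty}_t(\eta,g))$. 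Here I use that Lemma~\ref{lemma:recursive_moments} transposes verbatim to the stationary intensity $\lambda^{x,\infty}_t(\eta,g)$: the same recursive \textsc{bdg}-plus-renewal argument, run on $\R$ rather than $[0,T]$ with the convolution bound $\bar{\varphi}^*\star\bar{\Psi}^*=\bar{\Psi}^*$, shows that $\sup_{t \in \R}\mathbb{E}[\lVert \lambda^{x,\infty}_t(\eta,g)\rVert_q^q]$ is finite for any $q \in \mathbb{N}^\star$ under Assumption~\ref{ass:integrability}. Combining this bound with the polynomial growth of $\psi$, the residuals $\mathbb{E}[\lvert \psi - \psi_R\rvert(\lambda^{x,\infty}_t(\eta,g))]$ are $o(1)$ as $R \to \infty$, uniformly in $t$, yielding the convergence for unbounded $\psi$ and hence in probability.

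The main obstacle is the rigorous identification of the ergodic factor: one must check that the functional $F$ is not merely measurable with respect to $\sigma(\pi)$, but that the time-shift on $(\Omega,\sigma(\pi))$ genuinely pushes forward onto the shift on $(\lambda^{x,\infty}_t)$. This is standard but technical; the cleanest route is to prove it by induction on the Picard iterates of the Brémaud-Massoulié construction, each of which is manifestly shift-equivariant and converges monotonically (up to the $\Phi_k$-Lipschitz envelope) to $(\lambda^{x,\infty}_t(\eta,g))$. Once this is in place, the rest is a routine combination of Birkhoff and the uniform-integrability bound derived above.
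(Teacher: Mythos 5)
Your proposal follows essentially the same route as the paper: both rely on the Brémaud--Massoulié embedding to realise $\lambda^{x,\infty}_t(\eta,g)$ as a measurable, shift-equivariant functional $F$ of the Poisson base, inherit ergodicity/mixing from there, and then control the polynomial growth of $\psi$ via the moment bounds of Proposition~\ref{prop:moments} (the paper simply cites the ergodic theorem for $B(\textfrak{M})$-measurable, integrable functionals, whereas you first run Birkhoff on a truncation $\psi_R$ and then kill the tail by uniform integrability --- a harmless stylistic variant). One small correction: your parenthetical about needing $\sup g \leq \sup g^*$ for stability is misplaced, since the stationary process $N^{x,\infty}$ is constructed once and for all under $\Prob(\eta^*,g^*)$, which is exactly what Assumption~\ref{ass:stability} covers, and $\lambda^{x,\infty}_t(\eta,g)$ for general $(\eta,g)$ is merely a computed functional of this fixed random measure, so no stability condition at $(\eta,g)$ is required.
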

\begin{proof}
    As stated in Section~\ref{section:introduction}, we work with the construction of~\cite{bremaud1996stability}, wherein the stationary processes $(\boldsymbol{N}^{x,\infty}_t)$ are defined as limits of an iterative thinning scheme over the jumps of $(\pi_k)$, whence the random measures $N^{x,\infty}_k$ inherit the ergodicity and mixing properties (in the sense of \cite[Definition 12.3.I]{DVJ}). We refer specifically to the comments p.1573 and the proof of Theorem 7 in~\cite{bremaud1996stability} for technical details. In particular, for any $B(\textfrak{M})$-measurable function $f \colon \textfrak{M} \mapsto \R$ such that $\mathbb{E}[f(N^{x,\infty})]<\infty$,
    \begin{equation*}
        \frac{1}{T}\int_0^T
        f(\theta_tN^{x,\infty})\dif t
        \to \mathbb{E}[f(N^{x,\infty})],
        \textup{ as}
        \hspace{0.1cm}
        T \to \infty,
    \end{equation*}
    where the convergence occurs in probability. Since $\varphi(\cdot,\eta)$ may be expressed as a pointwise limit of compactly supported functions under Assumption~\ref{ass:continuity_in_time}, the function $F$ defined by
\begin{align*}
    \lambda^{x,\infty}_{k,u} (\eta,g)&= 
    F(\theta_u N_1^{x,\infty}, \hdots, \theta_u N_K^{x,\infty} )
    =
    \Big( 
    \Phi_k \Big[
    \mu_k (x, \eta)
    + 
    g(x)
    \sum_{l=1}^K
    \int_{\R} \varphi_{kl}(-s,\eta) \dif ( \theta_u N_l^{x,\infty})_s
    \Big] 
    \Big)_k
\end{align*}
is measurable from $(\textfrak{M},B(\textfrak{M}))$ to $(\R,B(\R))$ (see for instance~\cite[Example 7.1.4]{Bremaudbook} for a comparable argument). Furthermore, for any $k =1 \cdots k$, $F(\theta_u N_k^{x,\infty})$ admits a finite a first moment by consequence of Proposition~\ref{prop:moments}. This extends to moments of any order under Assumption~\ref{ass:integrability}, hence the same applies to any $f(F(N^{x,\infty}_k))$ where $f \in C_p(\R)$, and the Lemma follows.
\end{proof}
\begin{remark}\label{remark:extend_again_to_derivatives}
    Extending Lemma~\ref{Lemma:stationary_ergodicity} to derivatives of $(\lambda^{x,\infty}_t(\vartheta))=(\lambda^{x,\infty}_t(\eta,g(\cdot,\varpi)))$ according to $\vartheta=(\eta,\varpi)$ is again a purely procedural matter. It suffices that the $\partial_{\vartheta} \lambda^{x,\infty}(\vartheta)$ remain measurable functions of the random measures $N^{x,\infty}_k$ as they do under Assumption~\ref{ass:differentiability}. The existence of a finite first moment is comparably straightforward as discussed in remark~\ref{remark:extend_to_derivatives}.
\end{remark}

In view of Lemma~\ref{Lemma:stationary_ergodicity} and remark~\ref{remark:extend_again_to_derivatives}, we have obtained the following.

\begin{Lemma}\label{lemma:stationary_ergodicity_derivative}
    Under Assumptions~\ref{ass:lipschitz} to~\ref{ass:integrability}, for any $x \in [0,1]$, any $\vartheta \in \Theta$, and any $\psi \in C_p(\R^K \times \R^K \times \R^{K (d+p+1)}, \R^{K (d+p+1)^2})$,
    \begin{equation*}
    \frac{1}{T}
        \int_0^T
        \psi(\lambda^{x,\infty}_t(\eta^*,g^*),\lambda^{x,\infty}_{t}(\vartheta),\partial_{\vartheta} \lambda^{x,\infty}_{t}(\vartheta),
        \partial_{\vartheta} ^{\otimes 2}
        \lambda^{x,\infty}_{t}(\vartheta))
        \dif 
    \end{equation*}
   converges in probability under $\Prob(\eta^*,g^*)$ towards 
   \begin{equation*}
          \mathbb{E} \big[ 
       \psi(\lambda^{s,\infty}_0(\eta^*,g^*),\lambda^{x,\infty}_{0}(\vartheta),\partial_{\vartheta}  \lambda^{x,\infty}_{0}(\vartheta),
      \partial_{\vartheta} ^{\otimes 2}
        \lambda^{x,\infty}_{k,0}(\vartheta))
        \big]
   \end{equation*}
   as $T \to \infty$.
\end{Lemma}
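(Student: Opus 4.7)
The plan is to combine Lemma~\ref{Lemma:stationary_ergodicity} with Remark~\ref{remark:extend_again_to_derivatives} by showing that the joint vector
$$V_t(\vartheta)=\bigl(\lambda^{x,\infty}_t(\eta^*,g^*),\lambda^{x,\infty}_t(\vartheta),\partial_{\vartheta}\lambda^{x,\infty}_t(\vartheta),\partial^{\otimes 2}_{\vartheta}\lambda^{x,\infty}_t(\vartheta)\bigr)$$
can be written as a single measurable functional $F(\theta_t \boldsymbol{N}^{x,\infty})$ of the shifted stationary random measures, with $\mathbb{E}[\lvert \psi\circ F(\boldsymbol{N}^{x,\infty})\rvert]<\infty$. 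The desired convergence then follows by applying the ergodic theorem quoted in the proof of Lemma~\ref{Lemma:stationary_ergodicity} to the scalar stationary process $(\psi\circ F(\theta_t\boldsymbol{N}^{x,\infty}))_{t\in\R}$.

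For the measurability part, I would formally differentiate the stationary intensity in $\vartheta$. Under Assumptions~\ref{ass:differentiability} and~\ref{ass:integrability}, together with the boundedness of the derivatives of the Lipschitz activations $\Phi_k$ (see Remark~\ref{remark:extend_to_derivatives}), each $\partial^{\alpha}_{\vartheta}\lambda^{x,\infty}_{k,t}(\vartheta)$ expresses via a Faà di Bruno--type expansion in the deterministic quantities $\partial^{\beta}_{\vartheta}\mu_k(x,\vartheta)$ and in the stochastic integrals
$$\int_{-\infty}^t \partial^{\beta}_{\vartheta}\bigl[g(x,\varpi)\varphi_{kl}(t-s,\eta)\bigr]\dif N^{x,\infty}_{l,s},$$
for multi-indices $\beta\le\alpha$ with $\lvert \alpha \rvert \le 2$. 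The interchange of $\partial_{\vartheta}$ with the stochastic integral is legitimate because Assumption~\ref{ass:integrability} supplies a dominating envelope in every $L^q[0,\infty)$. Then the approximation argument in the proof of Lemma~\ref{Lemma:stationary_ergodicity}, combined with the piecewise continuity granted by Assumption~\ref{ass:continuity_in_time}, exhibits each such integral as a $B(\mathfrak{M})$-measurable functional of $\theta_t N^{x,\infty}_l$, and hence $V_t(\vartheta)=F(\theta_t \boldsymbol{N}^{x,\infty})$ for some measurable $F$.

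For the integrability part, since $\psi\in C_p$, it suffices to control arbitrary polynomial moments of the coordinates of $V_0(\vartheta)$. These are provided by Proposition~\ref{prop:moments} and Remark~\ref{remark:extend_to_derivatives}, applied to the stationary process obtained in the long-time limit of the embedded construction of~\cite{bremaud1996stability}: Fatou's lemma transfers the uniform-in-$(T,t)$ moment bounds to the limit measure, and stationarity allows one to reduce to $t=0$. Once measurability and integrability are secured, Lemma~\ref{Lemma:stationary_ergodicity} applied to the real-valued stationary functional $\psi\circ F(\theta_t \boldsymbol{N}^{x,\infty})$ yields the stated convergence in $\Prob(\eta^*,g^*)$-probability. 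The only genuinely technical step is the justification of differentiation under the stochastic integral; everything else is inherited from Lemma~\ref{Lemma:stationary_ergodicity} essentially verbatim.
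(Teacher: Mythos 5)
Your proposal is correct and mirrors the paper's own treatment: the paper proves Lemma~\ref{lemma:stationary_ergodicity_derivative} by invoking Lemma~\ref{Lemma:stationary_ergodicity} together with Remark~\ref{remark:extend_again_to_derivatives}, which reduces the extension to exactly the two checks you perform — measurability of the joint vector $(\lambda^{x,\infty}_t(\eta^*,g^*),\lambda^{x,\infty}_t(\vartheta),\partial_\vartheta\lambda^{x,\infty}_t(\vartheta),\partial^{\otimes 2}_\vartheta\lambda^{x,\infty}_t(\vartheta))$ as a functional of the shifted random measures, and finite moments so that $\psi\circ F$ is integrable. Your write-up is simply more explicit than the paper (the Faà di Bruno expansion, the dominated-convergence justification for differentiating under the stochastic integral, and the Fatou transfer of moment bounds are all left implicit in the paper, which instead dominates $\lambda^{x,\infty}_0$ by a stationary Hawkes intensity with supremal baseline and kernel as in the proof of Lemma~\ref{lemma:actually_L1}), but the route is identical.
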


\begin{remark}
    Lemmata~\ref{Lemma:stationary_ergodicity} and~\ref{lemma:stationary_ergodicity_derivative} may alternatively be obtained from the mixing of the random measures $N^{x,\infty}$ and Davydov's covariance inequality (see~\cite[equ. 1.12.b p. 6]{Rio}) together with Proposition~\ref{prop:moments}. A comparable line of reasoning is for instance followed in~\cite{kwan2023asymptotic}, where an argument specific to the cluster representation of the case $\Phi_k(x)=x$ is developed.
\end{remark}
In the sequel, we actually require that Lemma~\ref{lemma:stationary_ergodicity_derivative} holds in $L^1(\Prob(\vartheta^*,g^*))$. Hence Lemma~\ref{lemma:actually_L1}.
\begin{Lemma}\label{lemma:actually_L1}
    Under Assumption~\ref{ass:lipschitz} to~\ref{ass:integrability}, the convergences of Lemmata~\ref{Lemma:stationary_ergodicity} and~\ref{lemma:stationary_ergodicity_derivative} hold in $L^1(\Prob(\vartheta^*,g^*))$. 
\end{Lemma}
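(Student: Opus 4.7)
The plan is to upgrade the convergence in probability established in Lemmata~\ref{Lemma:stationary_ergodicity} and~\ref{lemma:stationary_ergodicity_derivative} to convergence in $L^1(\Prob(\eta^*,g^*))$ via a standard uniform integrability argument. Recall that a sequence $(X_T)$ converging in probability to some $X$ also converges in $L^1$ as soon as it is bounded in $L^p$ for some $p>1$ (Vitali's theorem). Hence it suffices to exhibit such a uniform $L^p$-bound on the time averages
\begin{equation*}
X_T(x,\vartheta) = \frac{1}{T}\int_0^T \psi\bigl(Y^{x,\infty}_t(\vartheta)\bigr) \dif t,
\end{equation*}
with $Y^{x,\infty}_t(\vartheta)$ as in Lemma~\ref{lemma:stationary_ergodicity_derivative}, and analogously for the simpler setting of Lemma~\ref{Lemma:stationary_ergodicity}.

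First I would transfer the moment bounds of Proposition~\ref{prop:moments} to the stationary processes. The renewal inequality of Lemma~\ref{Lemma:jaisson_like} holds identically for $\lambda^{x,\infty}_t(\vartheta)$ upon extending the lower integration bound to $-\infty$, and the recursive argument of Lemma~\ref{lemma:recursive_moments}, together with Remark~\ref{remark:extend_to_derivatives}, reproduces verbatim. By stationarity, this yields
\begin{equation*}
\sup_{x \in [0,1]} \sup_{t \in \R} \mathbb{E}\bigl[\sup_{\vartheta \in \Theta} \lVert \partial^i_\vartheta \lambda^{x,\infty}_t(\vartheta) \rVert^q_q \bigr] < \infty
\end{equation*}
for any $q \in \mathbb{N}$ and any $i = 0,\hdots,2$. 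Alternatively, one could invoke the coupling between $(\lambda^T_t)$ and $(\lambda^{x,\infty}_t)$ in the imbedding representation of~\cite{bremaud1996stability} to transport the bounds of Proposition~\ref{prop:moments} directly.

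Second, since $\psi \in C_p$, there exist constants $C>0$ and $m \in \mathbb{N}$ such that $\lvert \psi(y) \rvert \leq C(1 + \lVert y \rVert^m)$. Fixing any $p>1$, Jensen's inequality applied to the convex function $u \mapsto \lvert u \rvert^p$ and the normalized measure $\frac{1}{T}\dif t$ on $[0,T]$ gives
\begin{equation*}
\mathbb{E}\bigl[\lvert X_T(x,\vartheta) \rvert^p \bigr] \leq \frac{1}{T}\int_0^T \mathbb{E}\bigl[ \lvert \psi(Y^{x,\infty}_t(\vartheta)) \rvert^p \bigr] \dif t \leq C^p \sup_{t \in \R} \mathbb{E}\bigl[ \bigl(1+\lVert Y^{x,\infty}_t(\vartheta) \rVert\bigr)^{mp}\bigr],
\end{equation*}
which is finite under the moment bounds of the preceding step. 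Uniform boundedness in $L^p$ for some $p>1$ implies uniform integrability, and Vitali's theorem concludes.

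The main obstacle is essentially bookkeeping: carefully verifying that the moment argument of Lemma~\ref{lemma:recursive_moments} goes through for the stationary intensities and their first two derivatives. This requires only minor notational adjustments, since the stationary case differs from the time-inhomogeneous one by the absence of a transient initial regime, and no new analytical difficulty arises.
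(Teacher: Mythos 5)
Your proof is correct and takes essentially the same route as the paper: reduce to uniform integrability, exploit the polynomial growth of $\psi \in C_p$, and invoke finite moments of all orders for the stationary intensities (transferring the content of Proposition~\ref{prop:moments}). The paper is terser: it dominates $\lambda^{x,\infty}_0(\eta,g)$ coordinate-wise by the intensity of a stationary Hawkes process with constant baseline $\sup_x \mu_k(x,\eta)$ and kernel $\sup_x g(x)\varphi(\cdot,\eta)$ — this is your ``coupling alternative,'' relying on the common Poisson base — and then simply asserts that uniform integrability follows. Your version makes the final step explicit by applying Jensen's inequality to push the $p$-th moment inside the time average and concluding by $L^p$-boundedness for $p>1$, which is the natural way to formalize what the paper leaves implicit. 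No gap on either side; the content is the same.
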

\begin{proof}
    It suffices to prove that, for any $\psi \in C_p(\R^K)$, the collection indexed on $T$ of random variables $\frac{1}{T} \int_0^T \psi(\lambda_t(\eta,g)) \dif t$ is uniformly integrable. However, this is straightforward as for any $\psi \in C_p(\R^K)$, one has some $C>0$ and $p >0$ such that $\lvert \Psi(x) \rvert < C(1+\lVert x \rVert^p_p) $, hence for any $\vartheta \in \Theta$ and $g \in C[0,1]$,
    \begin{equation*}
        \lvert \psi(\lambda^{x,\infty}_0(\eta,g)) \rvert
        \leq
        C(1+\lVert \lambda^{x,\infty}_0(\eta,g) \rVert^p_p). 
    \end{equation*}
    Since the $(\boldsymbol{N}^{x,\infty}_t)$ are all embedded into the same base $(\pi_k)$, the random variable $\lambda^{x,\infty}_0(\eta,g) $ is dominated term-by-term by the intensity of the stationary Hawkes process with constant baseline $(\sup_{x \in [0,1]}\mu_k(x,\eta))$ and kernel $t \mapsto \sup_{x \in [0,1]} g(x) \varphi(t,\eta)$, which admits bounded moments of any order as per Proposition~\ref{prop:moments}. The uniform integrability follows and the case of higher derivatives is similar in every point. 
\end{proof}

We have recalled in Lemmata~\ref{Lemma:stationary_ergodicity} to~\ref{lemma:actually_L1} the ergodicity of the $(N^{x,\infty}_{k,t})$ pointwise in $x$. It should hold uniformly in $x$ for the approximation scheme of Section~\ref{section:kcd} to converge, hence we require Lemma~\ref{lemma:continuity_in_dx}.

\begin{Lemma}\label{lemma:continuity_in_dx}Under Assumptions~\ref{ass:lipschitz} to~\ref{ass:continuity}, for any $\eta \in \Gamma$, $g \in C[0,1]$, and any $q \in \{1,2 \}$,
    \begin{equation*}
     \sup_{\lvert x-y\rvert < \delta }
        \mathbb{E}\Bigl[
        \big\lVert  \lambda^{x,\infty}
        _0(\eta,g)
        -
        \lambda^{y,\infty}
        _0(\eta,g)
        \big\rVert_q^q
        \Bigr]
        \leq 
        C_p
        \big( 
        \omega(g^*, \delta )^q
        +
        \omega(g, \delta )^q
        +
        \omega(\mu(\cdot,\eta), \delta  )^q
        +
        \omega(\mu(\cdot,\eta^*), \delta  )^q        
        \big)
    \end{equation*}
    where, for any $f \colon \R \mapsto \R^d$, $\omega$ denotes the modulus of continuity $\omega(f,d)=\inf_{\lvert x-y\rvert \leq d} \lVert f(x)-f(x)\rVert_1$. 
\end{Lemma}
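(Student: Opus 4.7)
The cornerstone is the coupling of $(\boldsymbol{N}^{x,\infty})$ and $(\boldsymbol{N}^{y,\infty})$ into the common Poisson base $(\pi_k)$, by virtue of which the signed measure $dN^{x,\infty}_{k,\cdot} - dN^{y,\infty}_{k,\cdot}$ decomposes coordinate-wise into two counting measures with disjoint support, whose sum $\lvert dN^{x,\infty}_{k,\cdot} - dN^{y,\infty}_{k,\cdot}\rvert$ admits $\lvert \lambda^{x,\infty}_{k,t}(\eta^*,g^*) - \lambda^{y,\infty}_{k,t}(\eta^*,g^*)\rvert$ as its predictable intensity under $\Prob(\eta^*,g^*)$. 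The proof proceeds by two successive applications of this observation, once at the true parameters so as to control the coupled difference, and once at the evaluation parameters $(\eta,g)$. The dependence of $\lambda^{x,\infty}_{k,0}(\eta,g)$ on $x$ is channelled through three distinct sources and can be separated accordingly via a telescoping: using Assumption~\ref{ass:lipschitz},
\begin{align*}
\lvert \lambda^{x,\infty}_{k,0}(\eta,g) - \lambda^{y,\infty}_{k,0}(\eta,g) \rvert
&\leq \lvert \mu_k(x,\eta) - \mu_k(y,\eta)\rvert
+ \lvert g(x)-g(y)\rvert \sum_{l=1}^K \int_{-\infty}^0 \varphi_{kl}(-s,\eta) \dif N^{x,\infty}_{l,s} \\
&\quad + g(y) \sum_{l=1}^K \int_{-\infty}^0 \varphi_{kl}(-s,\eta) \lvert \dif N^{x,\infty}_{l,s} - \dif N^{y,\infty}_{l,s}\rvert,
\end{align*}
and the same inequality at $(\eta^*,g^*)$ controls the symmetric-difference measure.

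\textbf{Case $q=1$.} Taking expectations under $\Prob(\eta^*,g^*)$, the first term contributes $\omega(\mu(\cdot,\eta),\delta)$, and the second $\omega(g,\delta)$ times $\sum_l \lVert \varphi_{kl}(\cdot,\eta)\rVert_{L^1} \mathbb{E}[\lambda^{x,\infty}_{l,0}(\eta^*,g^*)]$, which is finite by Proposition~\ref{prop:moments}. For the third term, by stationarity of the joint process $(N^{x,\infty}, N^{y,\infty})$ the quantity $E_l := \mathbb{E}\lvert \lambda^{x,\infty}_{l,0}(\eta^*,g^*) - \lambda^{y,\infty}_{l,0}(\eta^*,g^*)\rvert$ is constant in time, and the coupling identifies the expectation of the third term with $g(y)\sum_l \lVert\varphi_{kl}(\cdot,\eta)\rVert_{L^1} E_l$. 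Writing the same telescoping at $(\eta^*,g^*)$, one obtains for the vector $E=(E_k)$ a renewal-type inequality of the form
\begin{equation*}
E \leq \bigl( \omega(\mu(\cdot,\eta^*),\delta) + C \omega(g^*,\delta) \bigr)\boldsymbol{1} + A^* E, \qquad A^*_{kl} = g^*(y)\int_0^\infty \varphi_{kl}(s,\eta^*)\dif s.
\end{equation*}
By Assumption~\ref{ass:stability}, $\rho(A^*)<1$, so $(I-A^*)^{-1}$ exists with nonnegative entries, yielding $E_k \leq C_1(\omega(\mu(\cdot,\eta^*),\delta) + \omega(g^*,\delta))$. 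Substituting this in the bound for $\mathbb{E}\lvert \lambda^{x,\infty}_{k,0}(\eta,g) - \lambda^{y,\infty}_{k,0}(\eta,g)\rvert$ closes the $q=1$ case.

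\textbf{Case $q=2$.} Apply $(a+b+c)^2 \leq 3(a^2+b^2+c^2)$ to the telescoping. The first two terms are handled as above, using Lemma~\ref{lemma:recursive_moments} for the second moment of $\sum_l \int_{-\infty}^0 \varphi_{kl}(-s,\eta)\dif N^{x,\infty}_{l,s}$. For the third, decompose $\lvert dN^{x,\infty}_l - dN^{y,\infty}_l\rvert = d\tilde{M}_l + \lvert \lambda^{x,\infty}_l(\eta^*,g^*) - \lambda^{y,\infty}_l(\eta^*,g^*)\rvert \dif s$ with $\tilde{M}$ a martingale under $\Prob(\eta^*,g^*)$ with predictable quadratic variation $\int \lvert\lambda^{x,\infty}_l - \lambda^{y,\infty}_l\rvert \dif s$. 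Using the martingale isometry on the first summand and Cauchy--Schwarz on the second, the expected square of the third term bounds by a linear combination of $\lVert \varphi\rVert_{L^2}^2 E_l$ (controlled by the $q=1$ case) and $\lVert \varphi\rVert_{L^1}^2 E^{(2)}_l$, where $E^{(2)}_l := \mathbb{E}\lvert \lambda^{x,\infty}_{l,0}(\eta^*,g^*) - \lambda^{y,\infty}_{l,0}(\eta^*,g^*)\rvert^2$. Writing the same bound at $(\eta^*,g^*)$ produces a renewal inequality for the vector $E^{(2)}$ of the form $E^{(2)} \leq c\boldsymbol{1} + B^* E^{(2)}$, where $B^*_{kl} = g^*(y)^2 \lVert \varphi_{kl}(\cdot,\eta^*)\rVert_{L^1}^2$ still has spectral radius strictly less than $1$ by Assumption~\ref{ass:stability}, and $c$ involves $\omega(\mu(\cdot,\eta^*),\delta)^2$, $\omega(g^*,\delta)^2$, and the already-controlled $E_l$. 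Solving yields $E^{(2)}_k \leq C_2(\omega(\mu(\cdot,\eta^*),\delta)^2 + \omega(g^*,\delta)^2)$, and substituting in the bound for the desired $q=2$ quantity concludes.

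\textbf{Main obstacle.} The delicate step is the $q=2$ analysis: extracting a useable second-moment bound from the coupling requires the martingale/drift decomposition of the symmetric-difference measure together with Young/Cauchy--Schwarz inequalities in such manner that the resulting self-referential inequality on $E^{(2)}$ retains a contractive operator with spectral radius strictly less than one. Assumption~\ref{ass:stability} is decisive here, as it ensures both renewal inequalities (at $q=1$ and $q=2$) may be inverted with nonnegative inverses, and the uniformity in $\lvert x-y\rvert<\delta$ is inherited from the uniform nature of the moduli of continuity.
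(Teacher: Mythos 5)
Your coupling set-up, the telescoping split, and the treatment of the symmetric-difference measure $\lvert \dif N^{x,\infty}_l - \dif N^{y,\infty}_l\rvert$ as a point process with intensity $\lvert\lambda^{x,\infty}_{l,s}(\eta^*,g^*)-\lambda^{y,\infty}_{l,s}(\eta^*,g^*)\rvert$ all match the paper, and the $q=1$ case is sound: the paper inverts the renewal inequality via the convolution resolvent $\Psi^{g,\eta}$, while you invert the (algebraically equivalent) finite-dimensional matrix inequality $E \leq m + A^*E$ with $\rho(A^*)<1$. Both are legitimate.

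The $q=2$ case, however, has a genuine gap. You square the \emph{raw} telescoping inequality and try to close a second-order renewal $E^{(2)} \leq c + B^*E^{(2)}$, claiming $B^*_{kl}=g^*(y)^2\lVert\varphi_{kl}(\cdot,\eta^*)\rVert_{L^1}^2$ still has $\rho(B^*)<1$. It is true that the Hadamard square of a nonnegative matrix with $\rho<1$ again has $\rho<1$, but that is not the matrix you actually obtain. Once you carry out the Young/convexity split to isolate the drift term and then apply Jensen (or Cauchy--Schwarz) to $(\sum_l\int\varphi_{kl}\Delta_l\,\dif s)^2$ before pushing the expectation inside, the resulting operator is not $A^*\circ A^*$ but something of the form $(1+\epsilon^{-1})\,\mathrm{diag}(\sum_m A^*_{km})\,A^*$ (or $2K\,A^*\circ A^*$, depending on how you normalize). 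A diagonal prefactor given by the \emph{row sums} of $A^*$ is not controlled by $\rho(A^*)<1$: for a $2\times 2$ matrix $A^*$ with zero diagonal and off-diagonal entries $(a,b)$ one has $\rho(A^*)=\sqrt{ab}<1$, yet $\rho(DA^*)=\sqrt{d_1 d_2 ab}$ can exceed one for unfavourable row sums. So Assumption~\ref{ass:stability} does not guarantee the contraction you need, and the second renewal inversion may fail in the multivariate case.

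The paper sidesteps this entirely by inverting the \emph{linear} renewal inequality first, obtaining the resolved bound \eqref{equ:dm_form}
\begin{equation*}
\lvert\lambda^{x,\infty}_{k,t}-\lambda^{y,\infty}_{k,t}\rvert
\leq m^{x,y}_{k,t}(\eta,g)
+\sum_l\int_{-\infty}^t\Psi^{g,\eta}_{kl}(t-s)\,m^{x,y}_{l,s}(\eta^*,g^*)\,\dif s
+\sum_l\int_{-\infty}^t\Psi^{g,\eta}_{kl}(t-s)\,\dif\tilde M^{x,y}_{l,s},
\end{equation*}
and only then squaring. The second moment of the martingale term is then $\sum_l\lVert\Psi^{g,\eta}_{kl}\rVert_{L^2}^2\,\mathbb{E}\lvert\Delta_l\rvert$, which is a \emph{first}-moment quantity already controlled in the $q=1$ case, so no second-order renewal inversion is needed and no spectral-radius condition on a squared operator is invoked. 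You should restructure the $q=2$ argument along these lines: resolve the linear renewal, then square.
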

\begin{proof} For any $\eta \in \Gamma$, $g \in C[0,1]$, $k=1 \hdots K$, and any $0 \leq t \leq T$,
    \begin{align}
        \lvert 
            \lambda^{x,\infty}_{k,t}(\eta,g)
            -
            \lambda^{y,\infty}_{k,t}(\eta,g)
        \rvert
        &\leq 
        \lvert \mu_k(x,\eta) - \mu_k(y,\eta) \rvert
        \nonumber
        \\
        &+
         \lvert g(x) - g(y) \rvert
       \sum_{l=1}^K
       \int_{- \infty}^t
       \varphi_{kl}(t-s,\eta) \dif N^{x,\infty}_{l,s}
       \label{equ:modulus}
       \\
       &+  
       \sum_{l=1}^K
       \int_{- \infty}^t
       g(y)\varphi_{kl}(t-s,\eta) \dif \lvert  N^{x,\infty}- N^{y,\infty} \rvert_{l,s},
       \nonumber
    \end{align}
     and the random measure $\lvert  N^{x,\infty}- N^{y,\infty} \rvert_{l,s}$ counts jumps occurring in either one of $N_l^{x,\infty}$ or $N_l^{y,\infty}$, but not the other. Since the $(N_t^{x,\infty})$ are all embedded into the same base $(\pi_k)$, the process
    \begin{equation*}
        (\tilde{M}^{x,y}_{k,t})
        =
        \bigl( \int_0^t 
         \dif \lvert  N^{x,\infty}- N^{y,\infty} \rvert_{k,s}
        -
        \int_0^t
        \lvert \lambda^{x,\infty}_{k,s}(\eta^*,g^*)
        -
        \lambda^{x,\infty}_{k,s}(\eta^*,g^*)]
        \dif s
        \bigr)
    \end{equation*}
    is a martingale, see again~\cite[page 1576]{bremaud1996stability}. Decomposing the random measure $\lvert N^{x,\infty}- N^{y,\infty} \rvert$ into its martingale and finite variation parts then, and denoting by
    \begin{equation*}
        m_{k,t}^{x,y}(\eta,g)
        =
        \lvert \mu_k(x,\eta) - \mu_k(y,\eta) \rvert
        +
         \lvert g(x) - g(y) \rvert
       \sum_{l=1}^K
       \int_{- \infty}^t
       \varphi_{kl}(t-s,\eta) \dif N^{x,\infty}_{l,s}
    \end{equation*}
    the first two terms of~\eqref{equ:modulus}, one has at $(\eta^*,g^*)$ that, for any $k \in 1\hdots K$, and $0\leq t \leq T$,
    \begin{align*}
        \lvert 
        \lambda^{x,\infty}_{k,t}(\eta^*,g^*)
        -
        \lambda^{y,\infty}_{k,t}(\eta^*,g^*) \rvert
        &\leq 
         m_t^{x,y}(\eta^*,g^*)
        +
        \sum_{l=1}^K
        \int_0^t \bar{\varphi}_{kl}^*(t-s)
        \dif \tilde{M}_{l,s}^{x,y}
        \\
        &+
        \sum_{l=1}^K
        \int_0^t
        \bar{\varphi}_{kl}^*(t-s)
        \lvert 
        \lambda^{x,\infty}_{l,s}(\eta^*,g^*)
        -
        \lambda^{y,\infty}_{l,s}(\eta^*,g^*) \rvert
        \dif s,
    \end{align*}
    where $\bar{\varphi}^*$ is defined in~\eqref{equ:bar_phi}. This is the same renewal equation as in~\eqref{equ:first_appearance_of_renewal} in the proof of Lemma~\ref{Lemma:jaisson_like}, with the baseline $\Phi_k[\mu_k]$ replaced by $m_t^{x,y}(\eta^*,g^*)$ and the martingale $(M^T_s)$ replaced by the martingale $(\tilde{M}_s^{x,y})$. Consequently, by the exact same arguments, we obtain that for any $k=1\hdots K$, any $\eta \in \Gamma$, $g \in C[0,1]$, and $0 \leq t \leq T$, 
    \begin{align}
          \lvert 
        \lambda^{x,\infty}_{k,t}(\eta,g)
        -
        \lambda^{y,\infty}_{k,t}(\eta,g) 
        \rvert
        &\leq 
        m_{k,t}^{x,y}(\eta,g)
        +
        \sum_{l=1}^K
        \int_{- \infty}^t 
        \Psi^{g,\eta}(t-s)
        m_{l,s}^{x,y}(\eta^*,g^*)
        \dif s
        \label{equ:dm_form}
        \\
        &+
        \sum_{l=1}^K
        \int_{- \infty}^t 
        \Psi^{g,\eta}_{kl}(t-s)
        \dif \tilde{M}^{x,y}_{l,s},
        \nonumber
    \end{align}
    where we have introduced the resolvent $\Psi^{g,\eta}=\sup_u g(u) \varphi(\cdot,\eta)  + \sup_u g(u)\varphi(\cdot,\eta) \star \Bar{\Psi}^*$. Taking the supremum, then the expectation, 
    \begin{align*}
       \sum_{k=1}^k
        \mathbb{E}[  \lvert 
        \lambda^{x,\infty}_{k,t} (\eta,g)
        -
        \lambda^{y,\infty}_{k,t}(\eta,g)
        \rvert]
        &\leq 
        \sum_{k=1}^K
        \mathbb{E}[ 
        m_{k,t}^{x,y}(\eta,g) ]
        +
        C_k \mathbb{E}[\lvert m_{k,t}^{x,y}(\eta^*,g^*)\rvert ],
    \end{align*}
    where we have used the fact that the $m_{k,t}^{x,y}$ are stationary and
    $
        C_k =  \sum_{l=1}^K \int_0^{\infty} \Psi^{g,\eta}_{kl}(s) \dif s
    $. By Proposition~\ref{prop:moments}, the preceding bound is itself dominated, up to some multiplicative constant, by
    \begin{equation*}
  \omega(g,\lvert x-y\rvert)
          +
        \omega(g^*,\lvert x-y\rvert)
        +
       2 \sum_{k=1}^K  \omega(\mu_k(\cdot,\eta),\lvert x-y\rvert),
    \end{equation*}
    closing the case $p=1$. At $p=2$ then, for any $\vartheta=(\eta,\varpi) \in \Theta$
    \begin{align*}
        \mathbb{E}
        \bigr[ 
        \lvert 
            \lambda^{x,\infty}_{k,t}(\eta,g)
            -
            \lambda^{y,\infty}_{k,t}(\eta,g)
        \rvert^2
        \bigl]
        &\leq         
         3\mathbb{E}\bigl[
         \bigl(
         m_{k,0}^{x,y}(\eta,g)
         \bigr)^2
         \bigr]
         \\
        &+
         3\sum_{l=1}^K
         \Big( 
         \int_{0}^{\infty} \Psi^{g,\eta}_{kl}(s) \dif s \Big)^2
         \mathbb{E}[(m_{l,0}^{x,y}(\eta^*,g^*))^2]
         \\
         &
         +
         3\sum_{l=1}^K
         \Big( 
         \int_{0}^{\infty} \Psi^{g,\eta}_{kl}(s)^2 \mathbb{E}[ 
        \lvert 
            \lambda^{x,\infty}_{k,s}(\eta^*,g^*)
            -
            \lambda^{y,\infty}_{k,s}(\eta^*,g^*)
        \rvert]
        \dif s \Big),
    \end{align*}
    where we have relied on elementary convexity inequalities and $(\tilde{M}_{k,t}^{x,y})$ having a diagonal predictable covariation $[\tilde{M}^{x,y},\tilde{M}^{x,y}]_t$ with coefficients $\int_0^t \lvert \lambda_{k,s}^{x,\infty} (\eta^*,g^*) -  \lambda_{k,s}^{y,\infty} (\eta^*,g^*)\rvert \dif s$. The quadratic case thus deduces from the case $p=1$ together with Proposition~\ref{prop:moments}.
\end{proof}

\begin{Lemma}\label{lemma:uniform_ergodicity} Under Assumptions~\ref{ass:lipschitz} to~\ref{ass:integrability}, for any $C_{\uparrow}(\R^K)$, any $\eta \in \Gamma$ and $g \in C[0,1]$,
    \begin{equation*}
        \sup_{x \in [0,1]}
        \mathbb{E}\Big[ 
        \big\lVert 
            \frac{1}{T}
        \int_0^T 
        \psi( \lambda^{x,\infty}_s(\eta,g))
        \dif s
        -
        \frac{1}{T}
        \int_0^T
        \mathbb{E}\big[ 
          \psi( \lambda^{x,\infty}_0(\eta,g))
        \big] 
        \dif s
        \big\rVert
        \Big]
        \to 0
    \end{equation*}
    in probability as $T \to \infty$.
\end{Lemma}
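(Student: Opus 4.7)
The plan is to combine the pointwise $L^1$ convergence of Lemma~\ref{lemma:actually_L1} with an equicontinuity argument in $x$ derived from Lemma~\ref{lemma:continuity_in_dx}, and finish via a compactness argument on $[0,1]$. For $T>0$ and $x \in [0,1]$, let
\[
F_T(x) = \mathbb{E}\Big[\big\lVert \tfrac{1}{T}\int_0^T \psi(\lambda_s^{x,\infty}(\eta,g))\dif s - \mathbb{E}[\psi(\lambda_0^{x,\infty}(\eta,g))]\big\rVert\Big],
\]
where I have used stationarity of $(\lambda_t^{x,\infty}(\eta,g))$ to convert the second integral of the statement into a constant in $s$. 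Pointwise in $x$, $F_T(x) \to 0$ as $T \to \infty$ by Lemma~\ref{lemma:actually_L1}. It then remains to show that the family $\{F_T\}_{T>0}$ is uniformly equicontinuous on $[0,1]$, after which the uniform convergence follows at once from a standard finite cover argument; note also that the supremum in the statement is deterministic, so convergence in probability reduces to ordinary convergence of a non-negative number.

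To obtain equicontinuity, I rely on the fact that $\psi \in C_{\uparrow}(\R^K)$ has gradient of polynomial growth, hence there exist $C,q>0$ such that $\lvert \psi(u) - \psi(v) \rvert \leq C(1 + \lVert u \rVert^q + \lVert v \rVert^q)\lVert u - v\rVert$. Applying this at $u = \lambda_s^{x,\infty}(\eta,g)$, $v = \lambda_s^{y,\infty}(\eta,g)$ and invoking Cauchy--Schwarz,
\[
\mathbb{E}\bigl[ \lvert \psi(\lambda_s^{x,\infty}(\eta,g)) - \psi(\lambda_s^{y,\infty}(\eta,g)) \rvert \bigr] \leq A \cdot \mathbb{E}\bigl[ \lVert \lambda_s^{x,\infty}(\eta,g) - \lambda_s^{y,\infty}(\eta,g) \rVert^2 \bigr]^{\nicefrac{1}{2}},
\]
where $A<\infty$ uniformly in $s,x,y$ by Proposition~\ref{prop:moments} applied to the stationary Hawkes process associated with the majorant kernel $\sup_{u \in [0,1]} g(u)\varphi(\cdot,\eta)$, in which all $\lambda^{x,\infty}(\eta,g)$ are coordinate-wise dominated via their common Poisson base (exactly as in the uniform integrability argument of Lemma~\ref{lemma:actually_L1}). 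Lemma~\ref{lemma:continuity_in_dx} then bounds the remaining factor by a sum of moduli of continuity of $g, g^*, \mu(\cdot,\eta)$ and $\mu(\cdot,\eta^*)$ at scale $\lvert x-y\rvert$. Averaging in $s \in [0,T]$, normalising by $T$ and using the triangle inequality yields a bound on $\lvert F_T(x) - F_T(y)\rvert$ independent of $T$ and tending to $0$ as $\lvert x-y\rvert \to 0$, on account of the uniform continuity of each of those functions on the compact interval $[0,1]$.

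The conclusion is then a classical compactness argument: given $\eps>0$, I would pick $\delta>0$ such that the equicontinuity bound is below $\nicefrac{\eps}{2}$ whenever $\lvert x-y\rvert<\delta$, cover $[0,1]$ by finitely many centers $x_1,\hdots,x_N$ at spacing at most $\delta$, and apply the pointwise convergence at each $x_i$ to extract $T_0$ such that $\max_i F_T(x_i)<\nicefrac{\eps}{2}$ for $T\geq T_0$. Any $x \in [0,1]$ lies within $\delta$ of some $x_i$, whence $F_T(x) \leq F_T(x_i) + \lvert F_T(x)-F_T(x_i)\rvert < \eps$ for $T \geq T_0$, and thus $\sup_{x \in [0,1]} F_T(x)<\eps$. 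The main obstacle I anticipate is in the equicontinuity step, where the polynomial growth of $\psi$ forces one to combine higher-moment estimates from Proposition~\ref{prop:moments} with the $L^2$ bound of Lemma~\ref{lemma:continuity_in_dx} in a way that preserves uniformity both in $s$ and in $T$; all the necessary ingredients, however, are already established in the preceding subsection.
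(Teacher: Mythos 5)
Your proposal is correct and follows essentially the same route as the paper's proof: pointwise $L^1$ ergodicity from the preceding lemmata, equicontinuity in $x$ via the polynomial gradient growth of $\psi$, Cauchy--Schwarz, Proposition~\ref{prop:moments} and Lemma~\ref{lemma:continuity_in_dx}, and then the compactness of $[0,1]$. If anything, you are slightly more careful than the paper in citing the $L^1$ version (Lemma~\ref{lemma:actually_L1}) for the pointwise step and in retaining the square root that the Cauchy--Schwarz step produces.
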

\begin{proof} Let $\psi \in C_{\uparrow}(\R^K)$, $\eta \in \Gamma$, and $g \in C[0,1]$. We have already obtained the stated convergence pointwise in $x$ in Lemma~\ref{Lemma:stationary_ergodicity}.  It suffices that $\psi(\lambda^{x,\infty}_0(\eta,g))$ be uniformly equi-continuous in $L^1(\Prob)$-norm for the uniform convergence to hold too.  We proceed as in~\cite{ClinetYoshida}, and expand $\psi$, so that
 \begin{equation}\label{equ:expansion}
        \psi(\lambda^{y,\infty}_t(\eta,g) )
        -
        \psi( \lambda^{x,\infty}_t(\eta,g))
        =
        \int_0^1 
        \nabla
        \psi\bigl( X^{x,y}_u(\eta,g)
         \bigr) 
         (\lambda^{y,\infty}_t (\eta,g)- \lambda^{x,\infty}_t(\eta,g)
         )
         \dif u,
    \end{equation}
    where $X^{x,y}_u(\eta,g) = (1-u) \lambda_t^{y,\infty}(\eta,g) + u\lambda_t^{x,\infty}(\eta,g)$. By definition of $C_{\uparrow}(\R^K)$, one has some $C,\gamma>0$ such that $\lvert \nabla \psi(x) \rvert \leq C(1+\lvert x \rvert^\gamma)$ whence, by Proposition~\ref{prop:moments}, for any $p \geq 1$,
    \begin{equation*}
        \sup_{x \in [0,1]}\mathbb{E} \bigl[
        \lvert
        \nabla \psi( X_u^{x,y}(\eta,g) )
        \rvert^p 
        \bigr]
        < \infty.
    \end{equation*}
    The Cauchy-Schwarz inequality together with expansion~\eqref{equ:expansion} then yields 
    \begin{equation*}
        \mathbb{E}[ 
        \lVert 
        \psi(\lambda^{x,\infty}_s(\eta,g))  
        -
        \psi(\lambda^{x,\infty}_s(\eta,g))  
        \rVert]
        =
        \mathcal{O}\big( 
        \mathbb{E}[
        \lVert 
        \lambda^{x,\infty}_s ((\eta,g)
        -
        \lambda^{x,\infty}_s((\eta,g)
        \rVert_2^2 ]
        \big).
    \end{equation*}
    The uniform equicontinuity of $\psi(\lambda^{x,\infty}_0)$ hence deduces from that of $x \mapsto \lambda^{x,\infty}_s$ in Lemma~\ref{lemma:continuity_in_dx}.
\end{proof}

\subsection{The \textsc{kcd} scheme}\label{section:kcd}
We may now proceed to the convergence of $(\lambda^T_t)$
\begin{Lemma}
Under Assumptions~\ref{ass:lipschitz} to~\ref{ass:integrability}, for any $\psi \in C_{\uparrow}(\R)$, any $\eta \in \Gamma$ and $g \in C[0,1]$,
    \begin{equation}\label{equ:error_sum}
    \mathbb{E} \Bigl[ 
        \frac{1}{T}
        \sum_{i=1}^{\nicefrac{T}{\Delta_T}}
        \sum_{k=1}^K 
        \int_{(i-1) \Delta_T}^{i\Delta_T  }
        \lvert 
        \psi(\lambda^T_{k,s}(\eta,g))
        -
        \psi(\lambda_{k,s}^{i \Delta_T,\infty}(\eta,g)) 
        \rvert \dif s
        \Bigr]
        \to 0,
    \end{equation}
    as $T \to \infty$.
\end{Lemma}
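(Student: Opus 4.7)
The plan is to fix a coarsening scale $\Delta_T \to \infty$ with $\Delta_T/T \to 0$ (say $\Delta_T = \sqrt T$), so that within each window of length $\Delta_T$ the time-varying parameters $\mu(s/T,\eta)$ and $g(s/T)$ vary little, while the Hawkes dynamics have enough time to forget its initial condition and look stationary with parameters frozen at $x_i = i\Delta_T/T$. First I would reduce the statement to a bound on the intensity difference itself. Since $\psi \in C_{\uparrow}(\R)$, the same expansion as in~\eqref{equ:expansion} together with Cauchy-Schwarz, the polynomial growth of $\nabla \psi$, and the moment bounds of Proposition~\ref{prop:moments} yield
\begin{equation*}
\mathbb{E}\bigl[\lvert \psi(\lambda^T_{k,s}(\eta,g)) - \psi(\lambda_{k,s}^{i \Delta_T,\infty}(\eta,g)) \rvert\bigr] \leq C\, \mathbb{E}\bigl[\lvert \lambda^T_{k,s}(\eta,g) - \lambda_{k,s}^{i \Delta_T,\infty}(\eta,g)\rvert^2\bigr]^{\nicefrac{1}{2}},
\end{equation*}
so it suffices to control the $L^2$-distance between the non-stationary intensity and its stationary counterpart, uniformly on each window.

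Next I would decompose the intensity difference. Using the $1$-Lipschitz property of $\Phi_k$ from Assumption~\ref{ass:lipschitz}, the difference $\lambda^T_{k,s} - \lambda^{x_i,\infty}_{k,s}$ splits into four pieces: (a) the baseline gap $\lvert \mu_k(s/T,\eta) - \mu_k(x_i,\eta)\rvert$, bounded by $\omega(\mu_k(\cdot,\eta), \Delta_T/T)$; (b) the reproduction-rate gap $\lvert g(s/T) - g(x_i)\rvert \sum_l \int_0^s \varphi_{kl}(s-u,\eta)\, \dif N^T_{l,u}$, bounded in expectation by $\omega(g, \Delta_T/T)$ times a uniformly bounded quantity via Proposition~\ref{prop:moments}; (c) the \emph{coupling discrepancy} $g(x_i) \sum_l \int_0^s \varphi_{kl}(s-u,\eta)\, \dif \lvert N^T - N^{x_i,\infty}\rvert_{l,u}$, which, as in Lemma~\ref{lemma:continuity_in_dx}, obeys on the shared Poisson base $E[\dif \lvert N^T - N^{x_i,\infty}\rvert_{l,u}] = E[\lvert \lambda^T_{l,u} - \lambda^{x_i,\infty}_{l,u}\rvert]\, \dif u$; and (d) the \emph{memory tail} $g(x_i) \sum_l \int_{-\infty}^0 \varphi_{kl}(s-u,\eta)\, \dif N^{x_i,\infty}_{l,u}$, whose expectation is bounded by a constant times $\int_s^\infty \bar\varphi^*(v)\, \dif v$ by stationarity. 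Writing $R^i_s = \mathbb{E}[\lvert \lambda^T_s - \lambda^{x_i,\infty}_s\rvert^2]^{\nicefrac{1}{2}}$, piece (c) produces a renewal inequality of the type~\eqref{equ:first_appearance_of_renewal} for $R^i_s$ with forcing term given by (a), (b) and (d), which resolves using the subcritical resolvent $\bar\Psi^*$ exactly as in Lemma~\ref{Lemma:jaisson_like}.

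Finally I would sum over $i$ and divide by $T$ to check that each contribution vanishes. The modulus-of-continuity contributions aggregate to $\omega(\mu_k(\cdot,\eta), \Delta_T/T) + \omega(g, \Delta_T/T) \to 0$ by uniform continuity on $[0,1]$ and the choice of $\Delta_T$. The memory-tail contributions aggregate to
\begin{equation*}
\frac{\Delta_T}{T} \sum_{i=1}^{T/\Delta_T} \int_{(i-1)\Delta_T}^{\infty} \bar\varphi^*(v)\, \dif v \leq \frac{1}{T} \int_0^\infty v\, \bar\varphi^*(v)\, \dif v + \frac{\Delta_T}{T}\int_0^\infty \bar\varphi^*(v)\, \dif v \to 0,
\end{equation*}
where the finiteness of $\int v\, \bar\varphi^*(v)\, \dif v$ follows from Assumption~\ref{ass:integrability}. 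The single genuine obstacle is that in the renewal inequality the kernel $\bar\Psi^*(s-u)$ convolves $R^i_u$ (and the forcings) over the whole past $u \in [0,s]$, not only over the window, so the moduli $\omega(\mu_k(\cdot,\eta), \lvert u/T - x_i\rvert)$ are not uniformly small when $u$ is far from the window. This is absorbed by splitting the convolution into a near-window part (where the modulus is controlled by $\omega(\cdot, \Delta_T/T)$) and a far part, the latter being weighted by $\bar\Psi^*(s-u)$ whose tail integral is itself $o(1)$ in $\Delta_T$ by the integrability of $\bar\Psi^*$. Combining all three vanishing rates yields the claimed convergence of~\eqref{equ:error_sum} to $0$.
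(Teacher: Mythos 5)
Your proof follows essentially the same route as the paper: reduce via the $C_\uparrow$ expansion and Cauchy--Schwarz to an $L^2$ bound on the intensity gap, decompose that gap by Lipschitz continuity of $\Phi_k$ into the baseline modulus, the reproduction-rate modulus, the coupled-thinning discrepancy, and the stationary pre-history tail, then resolve the resulting renewal inequation with the subcritical resolvent as in Lemma~\ref{Lemma:jaisson_like} and Lemma~\ref{lemma:continuity_in_dx}. You go a bit further than the written proof in one place: you explicitly note that the resolvent convolves the modulus forcings over the entire past $[0,s]$, where $\lvert u/T - x_i\rvert$ is not small, and you patch this by a near/far split of the convolution against the tail of $\bar\Psi^*$ -- this is a real subtlety that the paper passes over silently (it invokes stationarity of $m^{x,y}_t$ in Lemma~\ref{lemma:continuity_in_dx}, but the analogous forcing $\textswab{m}_t$ here is not stationary), so the patch is welcome.

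Two small points you should tighten. First, your memory-tail aggregation
\[
\frac{\Delta_T}{T}\sum_{i}\int_{(i-1)\Delta_T}^\infty \bar\varphi^*(v)\dif v \leq \frac{1}{T}\int_0^\infty v\,\bar\varphi^*(v)\dif v + \frac{\Delta_T}{T}\int_0^\infty \bar\varphi^*(v)\dif v
\]
needs $\int_0^\infty v\,\bar\varphi^*(v)\dif v<\infty$; Assumption~\ref{ass:integrability} only gives $\bar\varphi^*\in\bigcap_q L^q$, which does not imply a finite first moment (consider $t\mapsto t^{-2}$ near infinity). The paper instead discards the $i=1$ window and uses the \emph{uniform} bound $\int_{\Delta_T}^\infty\bar\varphi^* \to 0$ together with $(\int_{\Delta_T}^\infty\varphi)^2+\int_{\Delta_T}^\infty\varphi^2\to 0$, which is both cleaner and strictly within the assumptions -- you should do the same. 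Second, you define $R^i_s$ as an $L^2$-quantity but only bound the memory tail in expectation; to close the renewal argument at the $L^2$ level you need the second moment of the pre-history integral (via the isometry / \textsc{bdg} step that appears in the paper's display~\eqref{equ:ready_for_doob}), and then bootstrap $L^1\Rightarrow L^2$ as in the proof of Lemma~\ref{lemma:continuity_in_dx}. Neither issue changes the strategy, but both need to be fixed for the argument to stand on its own.
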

\begin{proof}
    Let $\psi \in C_{\uparrow}(\R)$ and let $\vartheta \in \Theta$, $g \in C[0,1]$. Let us first remark that the first term in the sum~\eqref{equ:error_sum} is of order $\mathcal{O}(\frac{1}{T})$ by consequence of Proposition~\ref{prop:moments}, hence we may work only with summands wherein $(i-1) \geq 1$. The proof is then similar to that of Lemmata~\ref{Lemma:jaisson_like} and~\ref{lemma:continuity_in_dx}: we intend to bound the difference in terms of a renewal equation. See in particular that, expanding $\psi$ as in~\eqref{equ:expansion} and using Proposition~\ref{prop:moments} still, one has for any $k =1 \hdots K$ and $0 \leq s \leq T$, 
    \begin{equation*}
    \sum_{k=1}^K \mathbb{E}\bigl[ 
        \lvert  \psi(\lambda^T_{k,s}(\eta,g))
        -
        \psi(\lambda_{k,s}^{i \Delta_T,\infty}(\eta,g))
        \rvert\bigr]
        \leq 
        C_{\psi}
        \sum_{k=1}^K 
        \mathbb{E}\bigl[ (\lambda^T_{k,s}((\eta,g))
        -
        \lambda_{k,s}^{i \Delta_T,\infty}(\eta,g))^2 
        \bigr],
    \end{equation*}
    where $C_{\psi}$ is a constant depending  on $\psi$. It then suffices to show that the uniform approximation error $\sup_{\{0 \leq x<s<x+\Delta_T \leq 1\}} \mathbb{E}[ (\lambda^T_{k,s}(\eta,g))
        -
        \lambda_{k,s}^{x,\infty}(\eta,g))^2 ]$ is at most of order $\frac{\Delta_T}{T}$. Then, let $T>0$, $j \in \mathbb{N}^\star$, $t \in [(j-1)\Delta_T,j\Delta_T]$. For any $k =1 \hdots K$,
        \begin{align}
            \lvert \lambda^T_{k,t}
            (\eta,g)
            -
            \lambda^{j\Delta_T ,\infty}_{k,t}
            (\eta,g)
            \rvert
            &\leq 
            \lvert 
            \mu_k\bigl(\frac{t}{T},\eta\bigr)
            -
            \mu_k\bigl(j\frac{\Delta_T}{T},\eta\bigr)
            \rvert 
            \nonumber
            \\
            &+
            \sup_{x \in [0,1]} g(x)
            \sum_{l=1}^K
            \int_{-\infty}^{0}\varphi_{kl}(t-s,\eta) 
            \dif N^{j \Delta_T}_{l,s}
            \label{equ:yet_another_renewal}
            \\
            &+
            \lvert 
                g\bigl(\frac{t}{T}\bigr)
                -
                g\bigr(j \frac{\Delta_T}{T} \bigl) 
            \rvert
            \sum_{l=1}^K
            \int_0^T
            \varphi_{kl}(t-s,\eta) 
            \dif N^{T,\infty}_{l,s}
            \nonumber
            \\
            &+
            \sup_{x}g(x)
            \sum_{l=1}^K
            \int_0^T  
            \varphi_{kl}(t-s, \eta)
            \dif \lvert 
            N^T_{l,s}
            -
            N^{j \Delta_T,\infty}_{l,s}
            \rvert
            ,
            \nonumber
        \end{align}
        where, for any $x \in [0,1]$, the random measure $ \dif \lvert 
            N^T_{l,s}
            -
            N^{x,\infty}_{l,s}
            \rvert$ counts event-times occurring in either one of $N^{T}$ or $N^{x,\infty}$ but not the other. Denote by $\textswab{m}_t(\eta,g)$ the sum of the first three terms on the right-hand side of~\eqref{equ:yet_another_renewal} and introduce 
            the martingale $(\bar{M}^{j,T}_s)$ defined by
            \begin{equation*}
                \Bar{M}^{j,T}_{k,s} = 
                \int_0^t
                \dif \lvert
                N_{k,s}^{T}
                -
                N_{k,s}^{j \Delta_T,\infty}\rvert 
                -
                \lvert
                \lambda_{k,s}^{T}(\eta^*,g^*)
                -
                \lambda_{k,s}^{j \Delta_T,\infty}(\eta^*,g^*)\rvert
                \dif s.
            \end{equation*} 
            Observe that~\eqref{equ:yet_another_renewal} coincides with inequality~\eqref{equ:modulus} in the proof of Lemma~\ref{lemma:continuity_in_dx}, with the random measure $\dif \lvert N^{x,\infty} - N^{y,\infty} \rvert$ replaced by $\dif \lvert N^{T} - N^{j \Delta_T,\infty} \rvert$, and $m^{x,y}_t(\eta,g)$ replaced by $\textswab{m}_t(\eta,g)$. By the same arguments as for Lemma~\ref{lemma:continuity_in_dx} then, for any $\vartheta \in \Theta$ and $g \in C[0,1]$, 
            \begin{equation*}
                  \lvert \lambda^T_{k,t}((\eta,g))
                -
                \lambda^{x,\infty}_{k,t}((\eta,g))
                \rvert 
                \leq 
                \textswab{m}^{x,y}_t(\vartheta)
                +
                \int_0^t \Psi^{\eta,g}(t-s) 
                 \textswab{m}_s(\eta^*,g^*)
                 \dif s
                 +
                 \int_0^t \Psi^{\eta,g}(t-s) 
                 \textswab{m}_s(\eta^*,g^*)
                 \dif \tilde{M}^{x,T}_s.
            \end{equation*}
            which is indeed the same decomposition as in~\eqref{equ:jaisson_form} or~\eqref{equ:dm_form}. In view of the proof of Lemma~\ref{lemma:continuity_in_dx}, the desired continuity will then follow from showing that $\mathbb{E}[\textswab{m}_{k,t}(\eta,g)^2] \to 0$ uniformly in $t$ as $T \to \infty$ for any $k = 1 \hdots K$ and any $\vartheta \in \Theta$. Recall then that $\Delta_T= o(T)$. We work over sub-intervals wherein $t \in [(j-1) \Delta_T,j\Delta_T]$, hence
            \begin{equation*}
                \sum_{k=1}^K \lvert \mu_k(\frac{t}{T},\eta) - \mu_k(j \frac{\Delta_T}{T},\eta) \rvert^2 = \mathcal{O}( \omega\bigl(\sum \mu_k(\cdot,\eta), \Delta_T)^2\bigr)
            \end{equation*}
            which vanishes as $T \to \infty$ uniformly in $t$ for any $\eta \in \Gamma$ as a consequence of the uniform continuity of the $\mu_k(\cdot,\eta)$ over $[0,1]$. Similarly, for any $\eta \in \Gamma$, $g \in C[0,1]$, any $0 \leq t \leq T$ and $l = 1 \hdots K$,
            \begin{equation*}
                \bigl\lvert 
                g\big(  \frac{t}{T} \bigr)
                -
                g\big(  j \frac{\Delta_T}{T} \bigr)
                \bigr\rvert^2
                \mathbb{E}\Bigl[ 
                \bigl(
                \int_0^t
                \varphi_{kl}(t-s,\eta) \dif N^T_{l,s}
                \bigr)^2
                \Bigr]
                =
                \mathcal{O}( \omega(g,\frac{\Delta_T}{T}))
            \end{equation*}
        thanks to Proposition~\ref{prop:moments}, and one concludes again from the uniform continuity of $g$. There remains only the third term on the right hand side of~\eqref{equ:yet_another_renewal} which is bounded, up to some multiplicative constant, by
        \begin{align}\label{equ:ready_for_doob}
            \sum_{l=1}^K
            \mathbb{E}\Bigl[ 
            \Big(
            \int_{-\infty}^{0}
            \varphi_{kl}(t-s,\eta) 
            \dif N^{x,\infty}_{l,s}\Big)^2
            \Bigr]
            & =
            \sum_{l=1}^K
            \mathbb{E}\Bigl[ 
            \Bigl(
            \int_{-\infty}^{0}
            \varphi_{kl}(t-s,\eta) 
            (\dif M^{x,\infty}_{l,s} 
            +
            \lambda_{l,s}^{x,\infty}(\eta^*,g^*) \dif s)
            \Bigr)^2
            \Bigr].
        \end{align}
        Then, using that $(\int_{-\infty}^s \varphi(t-u,\eta) \dif \bar{M}^{x,T}_u)$ is a martingale with covariation $(\int_0^t \varphi_{kl}(t-s,\eta) \dif N_{l,s}^{x,\infty})$ for any $t \in \R_+$, $x \in [0,1]$, and $k= 1\hdots K$, one has for any $0 \leq t \leq T$ and $k=1 \hdots K$, 
        \begin{equation*}
             \sum_{l=1}^K
            \mathbb{E}\Bigl[ 
            \Big(
            \int_{-\infty}^{0}
            \varphi_{kl}(t-s,\eta) 
            \dif M^{x,\infty}_{l,s}\Big)^2
            \Bigr]
            \leq 
            \Bigl( \sum_{l=1}^K \int_t^{\infty} \varphi_{kl}^2(s,\eta) \dif s 
            \Big)
            \sup_{x \in [0,1]} \mathbb{E}[\lambda_{l,0}^{x,\infty}(\eta^*,g^*)],
        \end{equation*}
        and, by application of Jensen's inequality with the normalisation $\int_t^{\infty} \varphi_{kl} (s,\eta)\dif s$,
        \begin{equation*}
           \sum_{l=1}^K 
           \mathbb{E}\Bigl[ \Bigl( \int_0^t \varphi_{kl}(t-s,\eta) \lambda_{l,s}(\eta^*,g^*) \Bigr)^2 \Bigr]
           \leq
           \sum_{l=1}^K
           \Bigl( \int_t^{\infty} \varphi(s,\eta) \dif s
           \Bigr)^2
           \mathbb{E}\big[ (\lambda^{x,\infty}_{l,0} (\eta^*,g^*))^2\big].
        \end{equation*}
        Recall that we have discarded the first summand wherein $t \in [0,\Delta_T)$, hence work with $t \geq \Delta_T$. Re-inserting the last two inequalities into the bound~\eqref{equ:ready_for_doob} and applying Proposition~\ref{prop:moments}, we have therefore established 
        \begin{equation*}
            \mathbb{E} 
             \Bigl[ 
             \bigl( 
             \sup_x g(x)
            \sum_{l=1}^K
            \int_{-\infty}^{0}\varphi_{kl}(t-s,\eta^*) 
            \dif N^{j \Delta_T}_{l,s}
            \bigr)^2 \Bigr]
            =
            \mathcal{O}\Bigl(
            \sum_{l=1}^K
            \bigl(\int_{\Delta_T}^{\infty} \varphi_{kl}(s,\eta) \dif s\bigr)^2
            +
            \int_{\Delta_T}^{\infty} \varphi^2_{kl}(s,\eta)  \dif s
            \Bigr),
        \end{equation*}
        thereby ending the proof in view of the integrability properties of the $\varphi(\cdot,\eta)$ under Assumption~\ref{ass:integrability}.
        \end{proof}
    \begin{Lemma}\label{lemma:penultimate_approximation}
    Under Assumptions~\ref{ass:lipschitz} to~\ref{ass:integrability}, for any $\eta \in \Gamma$ and $g \in C[0,1]$,
    \begin{equation*} 
        \bigl\lVert \frac{1}{T}
        \sum_{i=1}^{\nicefrac{T}{\Delta_T}}
        \int_{(i-1) \Delta_T}^{i\Delta_T }
        \psi(\lambda^{(i-1) \Delta_T,\infty}_t(\eta,g))
        \dif t
        -
        \int_{(i-1) \Delta_T}^{i\Delta_T }
        \mathbb{E} \big[ \psi(\lambda_0^{(i-1) \Delta_T,\infty}(\eta,g) )
        \big] 
        \dif s
        \bigr\rVert 
        \to 
        0
    \end{equation*}
    as $T \to \infty$ in probability under $\Prob(\eta^*,g^*)$.
    \end{Lemma}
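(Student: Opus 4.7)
The plan is to rely on two complementary ingredients: the strict stationarity in $t$ of each individual intensity $(\lambda^{x,\infty}_t(\eta,g))$, and the uniform-in-$x$ ergodic result of Lemma~\ref{lemma:uniform_ergodicity}. For each slice index $i$, the integral $\int_{(i-1)\Delta_T}^{i\Delta_T} \psi(\lambda^{(i-1)\Delta_T,\infty}_t(\eta,g))\,\mathrm{d}t$ has the same marginal distribution as $\int_0^{\Delta_T} \psi(\lambda^{(i-1)\Delta_T,\infty}_t(\eta,g))\,\mathrm{d}t$ by stationarity in time, so that only the Lebesgue measure of the window enters the upcoming $L^1$ estimates.

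Concretely, I would bound the $L^1(\Prob(\eta^*,g^*))$-norm of the quantity in the statement by
\[
\frac{1}{T}\sum_{i=1}^{T/\Delta_T} \mathbb{E}\Big[\Big|\int_{(i-1)\Delta_T}^{i\Delta_T} \big(\psi(\lambda^{(i-1)\Delta_T,\infty}_t(\eta,g)) - \mathbb{E}[\psi(\lambda^{(i-1)\Delta_T,\infty}_0(\eta,g))]\big)\,\mathrm{d}t\Big|\Big]
\]
via the triangle inequality, then rewrite each expectation using stationarity as
\[
\Delta_T \cdot \mathbb{E}\Big[\Big|\tfrac{1}{\Delta_T}\int_0^{\Delta_T}\psi(\lambda^{(i-1)\Delta_T,\infty}_t(\eta,g))\,\mathrm{d}t - \mathbb{E}[\psi(\lambda^{(i-1)\Delta_T,\infty}_0(\eta,g))]\Big|\Big].
\]
Applying Lemma~\ref{lemma:uniform_ergodicity} with $T$ replaced by $\Delta_T$ shows that this second factor tends to $0$ as $\Delta_T\to\infty$, uniformly in the $x$-index. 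Since the $T/\Delta_T$ summands each carry the prefactor $\Delta_T/T$, the global bound collapses to $\sup_{x\in[0,1]} \mathbb{E}\big[\big|\tfrac{1}{\Delta_T}\int_0^{\Delta_T}\psi(\lambda^{x,\infty}_t(\eta,g))\,\mathrm{d}t - \mathbb{E}[\psi(\lambda^{x,\infty}_0(\eta,g))]\big|\big]$, which vanishes as $T\to\infty$, yielding $L^1$ and hence in-probability convergence.

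The implicit scaling $\Delta_T\to\infty$ together with $\Delta_T/T\to 0$ is essential here, and is consistent with the discretization employed in the preceding Lemma. The argument is otherwise deliberately routine; its only noteworthy point is that one crucially needs the uniform-in-$x$ formulation of Lemma~\ref{lemma:uniform_ergodicity} rather than the pointwise counterpart of Lemma~\ref{Lemma:stationary_ergodicity}, as the latter would not control $\sup_i$ after summation. I do not foresee any serious obstacle: no control of cross-correlations between the coupled stationary intensities is required, the triangle inequality alone suffices, and stationarity in $t$ of each $(\lambda^{x,\infty}_t(\eta,g))$ is the single structural fact invoked beyond the uniform ergodic Lemma already in hand.
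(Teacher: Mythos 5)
Your proof is correct and takes essentially the same route as the paper's: triangle inequality to collapse the Riemann-sum error to a single $\sup_{x\in[0,1]}$ term, followed by Lemma~\ref{lemma:uniform_ergodicity} applied at scale $\Delta_T$ (requiring $\Delta_T\to\infty$). The paper's version is simply terser -- it does not spell out the stationarity-in-$t$ step that shifts each window to $[0,\Delta_T]$, and in fact contains a small index typo ($\lambda^{x\Delta_T,\infty}$ vs.\ $\lambda^{x,\infty}$) that your write-up correctly avoids.
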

    \begin{proof} By the triangle inequality, the expectation of the expression of interest is bounded by
        \begin{equation*}
        \sup_{x \in [0,1]}
         \mathbb{E}\Bigl[
         \frac{1}{\Delta_T}
         \Bigl\lvert
        \int_{0}^{ \Delta_T}
        \psi(\lambda^{x \Delta_T ,\infty}_t(\eta,g))
        \dif t
        -
        \mathbb{E} \big[ \psi(\lambda_0^{x,\infty} (\eta,g))
        \big] 
        \Bigr\rvert
        \Bigr],
        \end{equation*}
        which indeed goes to $0$ as $T \to \infty$ via an immediate adaptation of Lemma~\ref{lemma:uniform_ergodicity}.
    \end{proof}
\begin{Lemma}\label{equ:final_approximation}
Under Assumptions~\ref{ass:lipschitz} to~\ref{ass:integrability},
        \begin{equation*}
            \frac{1}{T}
            \sum_{i=1}^{\nicefrac{T}{\Delta_T}}
            \int_{(i-1)\Delta_T}^{i\Delta_T}
            \mathbb{E}[\psi(\lambda^{(i-1) \Delta_T,\infty}_{l,0})] \dif s
            \to 
            \int_0^1  \mathbb{E}[\psi(\lambda^{x,\infty}_{l,0})]  \dif x,
        \end{equation*}
     as $T \to \infty$ in probability under $\Prob(\eta^*,g^*)$.
    \end{Lemma}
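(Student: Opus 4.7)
The statement reduces to a deterministic Riemann sum convergence, since the quantity to bound involves only expectations. Writing $x_i = (i-1)\Delta_T/T$ for $i = 1,\dots,T/\Delta_T$ and factoring $\Delta_T$ out of each inner integral, one observes that
\begin{equation*}
\frac{1}{T} \sum_{i=1}^{T/\Delta_T} \int_{(i-1)\Delta_T}^{i\Delta_T} \mathbb{E}[\psi(\lambda^{(i-1)\Delta_T,\infty}_{l,0}(\eta,g))] \dif s
= \sum_{i=1}^{T/\Delta_T} \frac{\Delta_T}{T} F(x_i),
\end{equation*}
where $F \colon x \mapsto \mathbb{E}[\psi(\lambda^{x,\infty}_{l,0}(\eta,g))]$. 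Since $\Delta_T = o(T)$ and $T/\Delta_T \to \infty$, the partition $(x_i)$ has mesh $\Delta_T/T \to 0$ and spans $[0,1]$, so it suffices to establish that $F$ is Riemann-integrable on $[0,1]$.

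I would argue that $F$ is in fact continuous on $[0,1]$. Given $x,y \in [0,1]$, expand $\psi$ along the segment from $\lambda^{x,\infty}_{l,0}(\eta,g)$ to $\lambda^{y,\infty}_{l,0}(\eta,g)$ exactly as in~\eqref{equ:expansion} from the proof of Lemma~\ref{lemma:uniform_ergodicity}, and apply the Cauchy-Schwarz inequality. Using that $\psi \in C_\uparrow(\mathbb{R})$ so that $\lvert \nabla \psi \rvert$ has polynomial growth, together with the uniform moment bounds provided by Proposition~\ref{prop:moments}, one gets
\begin{equation*}
\lvert F(x) - F(y) \rvert
\leq C \, \mathbb{E}\bigl[ \lvert \lambda^{x,\infty}_{l,0}(\eta,g) - \lambda^{y,\infty}_{l,0}(\eta,g) \rvert^2 \bigr]^{1/2},
\end{equation*}
for some $C>0$ independent of $x,y$. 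The $L^2$-continuity bound of Lemma~\ref{lemma:continuity_in_dx}, applied to $q=2$, then yields $\lvert F(x)-F(y) \rvert \to 0$ as $\lvert x-y\rvert \to 0$, on account of the uniform continuity of $\mu(\cdot,\eta)$, $\mu(\cdot,\eta^*)$, $g$ and $g^*$ on $[0,1]$.

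Continuity of $F$ on the compact $[0,1]$ implies that it is Riemann-integrable, so the Riemann sum $\sum_i (\Delta_T/T) F(x_i)$ converges to $\int_0^1 F(x) \dif x$, which is the claimed limit. The convergence is deterministic, hence in particular holds in probability under $\Prob(\eta^*,g^*)$. The only technical point is the continuity of $F$, which is essentially a byproduct of the arguments already deployed for Lemma~\ref{lemma:uniform_ergodicity}; no further ergodicity input is required.
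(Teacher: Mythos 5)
Your proof is correct and takes the same route as the paper, which simply observes that the left-hand side is the Riemann sum $\sum_i (\Delta_T/T)\,\mathbb{E}[\psi(\lambda^{(i-1)\Delta_T,\infty}_{l,0})]$ and concludes. You additionally justify Riemann-integrability by proving continuity of $x\mapsto\mathbb{E}[\psi(\lambda^{x,\infty}_{l,0}(\eta,g))]$ via the expansion from Lemma~\ref{lemma:uniform_ergodicity}, Cauchy--Schwarz, Proposition~\ref{prop:moments}, and the $q=2$ case of Lemma~\ref{lemma:continuity_in_dx} — a point the paper leaves implicit — so this is a sound and slightly more complete rendering of the same argument.
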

    \begin{proof}
        It suffices to remark, as in~\cite{kwan2023asymptotic}, that 
        \begin{equation*}
             \frac{1}{T}
            \sum_{i=1}^{\nicefrac{T}{\Delta_T}}
            \int_{(i-1)\Delta_T}^{(i-1)\Delta_T+\Delta_T}
            \mathbb{E}[\psi(\lambda^{(i-1) \Delta_T,\infty}_{l,0})] \dif s
            =
            \sum_{i=1}^{\nicefrac{T}{\Delta_T}}
            \frac{\Delta_T}{T}
            \mathbb{E}[\psi(\lambda^{(i-1) \Delta_T,\infty}_{l,0})]
        \end{equation*}
        is a Riemann sum over $[0,1]$, whence it converges towards the desired limit.
    \end{proof}

    The arguments we have presented in Remarks~\ref{remark:extend_to_derivatives} and~\ref{remark:extend_again_to_derivatives} apply to Lemmata~\ref{lemma:penultimate_approximation} and~\ref{equ:final_approximation}. Thus they extend from the $(\lambda^{x,\infty}_t(\eta,g))$ to derivatives of $(\lambda_t^{x,\infty}(\vartheta))=(\lambda_t^{x,\infty}(\eta,g(\cdot,\varpi)))$ according to $\vartheta=(\eta,\varpi)$ in the same manner Lemma~\ref{Lemma:stationary_ergodicity} extends into Lemma~\ref{lemma:stationary_ergodicity_derivative}. Gathering our three successive approximations, one finds that, for any continuous $\psi \in \C_{\uparrow}(E)$, 
    \begin{equation*}
        \frac{1}{T} \int_0^T \psi(\lambda^T_t(\eta^*,g^*),\lambda^T_t(\vartheta),
        \partial_{\vartheta} \lambda^T_t(\vartheta), \partial_{\vartheta}^{\otimes 2} \lambda^T_t(\vartheta)) \dif t 
    \end{equation*}
    converges in probability under $\Prob(\eta^*,g^*)$ towards 
    \begin{equation*}
         \int_0^1 
         \mathbb{E}[\psi(\lambda^{x,\infty}_0(\eta^*,g^*),\lambda^{x,\infty}_0(\vartheta),
        \partial_{\vartheta} \lambda^{x,\infty}_0(\vartheta), \partial_{\vartheta}^{\otimes 2} \lambda^{x,\infty}_0(\vartheta)) ]\dif x,
    \end{equation*}
    which is Proposition~\ref{prop:ergodicity} at $u=1$, pointwise in $\vartheta$. Extension to any $u \in [0,1]$ proceeds from replacing the approximation grid $[(j-1)\Delta_T,j\Delta_T)$ in favour of the grid $[(j-1)\Delta_Tu,j\Delta_Tu)$ with step $u \Delta_T$. The final limit in Lemma~\eqref{equ:final_approximation} then takes the form $\int_0^1 u \mathbb{E}[\psi(\lambda^{ux,\infty}_0)] \dif x$, and one concludes with the change of variable $x \mapsto ux$. Finally, the uniformity in $\vartheta \in \Theta$ is a consequence of Proposition~\ref{prop:moments} (see~\cite[Proposition 3.8]{ClinetYoshida}). 

\section{Proof of Proposition~\ref{Proposition:robust}}\label{section:proof_consistent}

We proceed to the proof of Proposition~\ref{Proposition:robust}, which is set in the semi-nonparametric context.  The relevant \textsc{mle}s are denoted by
\begin{align*}
(\hat{\eta}^{\mathfrak{d}}_T,\hat{g}^\mathfrak{d}_T) 
   &=
   \argmax_{ \eta  \in \Gamma, \hspace{0.1cm} g \in \B_{\mathfrak{d}}[0,1] } \mathcal{L}_T( \eta, g), \\
   (\hat{\eta}^{0,\mathfrak{d}}_T, \hat{g}^{0,\mathfrak{d}}_T) 
   &= 
   \argmax_{ \eta \in \Gamma, \hspace{0.1cm} C >0} \mathcal{L}_T( \eta, C).
\end{align*}

We follow the strategy described in Section~\ref{section:nonparametric}, and seek some estimate of the discrepancy within the normalized likelihood ratio $\frac{1}{T}\Lambda^\mathfrak{d}_T$, as quantified in Lemma~\ref{Lemma2} below.

    \begin{Lemma}\label{Lemma2} Under Assumption~\ref{ass:lipschitz} to~\ref{ass:integrability}, for any $\epsilon>0$, there is some $\mathfrak{d}>0$ such that
\begin{equation*}
    \sup_{\vartheta \in \Theta, \hspace{0.1cm} g \in \mathbb{B}_{\mathfrak{d}}[0,1]  } \mathcal{L}_T(\vartheta,{g_\mathfrak{d}})
    \geq 
    \mathcal{L}_T(\vartheta^*,g^*)
    -T \varepsilon
    +
    o_{\Prob(\vartheta^*,g^*)}(T)
\end{equation*}
as $T \to \infty$.
\end{Lemma}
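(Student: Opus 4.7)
The plan is to exhibit an explicit $g_{\mathfrak{d}} \in \B_{\mathfrak{d}}[0,1]$ that uniformly approximates $g^*$ and then to bound $\mathcal{L}_T(\eta^*, g^*) - \mathcal{L}_T(\eta^*, g_{\mathfrak{d}})$ above by a quadratic in $\|g^* - g_{\mathfrak{d}}\|_\infty$, up to a negligible martingale term. Since the supremum on the left-hand side of the claimed inequality dominates $\mathcal{L}_T(\eta^*, g_{\mathfrak{d}})$, the result will then follow by choosing $\mathfrak{d}$ large enough.

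For the construction, I would take $g_{\mathfrak{d}}(x) = \sum_{k=0}^{\mathfrak{d}} g^*(k/\mathfrak{d}) B_{k,\mathfrak{d}}(x)$, the classical Bernstein approximant of $g^*$. Since $g^* \geq 0$ is continuous on $[0,1]$, its Bernstein coefficients are non-negative, so that $g_{\mathfrak{d}} \in \B_{\mathfrak{d}}[0,1]$ (possibly after enlarging $\Xi_{\mathfrak{d}}$ coordinate-wise), and $\|g^* - g_{\mathfrak{d}}\|_\infty \to 0$ as $\mathfrak{d} \to \infty$ by the classical Weierstrass-Bernstein theorem.

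Substituting the compensator decomposition $dN^T_{k,s} = \lambda^T_{k,s}(\eta^*,g^*)\, ds + dM^T_{k,s}$ into the log-likelihood difference and rearranging as at the start of the proof of Proposition~\ref{prop:consistent}, one obtains
\begin{equation*}
\mathcal{L}_T(\eta^*, g^*) - \mathcal{L}_T(\eta^*, g_{\mathfrak{d}}) = \sum_{k=1}^K \int_0^T \lambda^T_{k,s}(\eta^*,g^*)\, \psi\!\left(\frac{\lambda^T_{k,s}(\eta^*, g_{\mathfrak{d}})}{\lambda^T_{k,s}(\eta^*, g^*)}\right) ds + R_T,
\end{equation*}
where $\psi(z) = z - 1 - \log z \geq 0$ and $R_T$ is a square integrable martingale. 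The convexity estimate $\psi(z) \leq (z-1)^2/z$, the uniform positive lower bound on intensities of Assumption~\ref{ass:no_inhibition}, and the $1$-Lipschitz continuity of $\Phi_k$ together yield
\begin{equation*}
\lambda^T_{k,s}(\eta^*,g^*)\, \psi\!\left(\frac{\lambda^T_{k,s}(\eta^*, g_{\mathfrak{d}})}{\lambda^T_{k,s}(\eta^*, g^*)}\right) \leq C\, \|g^* - g_{\mathfrak{d}}\|_\infty^2 \left(\sum_{l=1}^K \int_0^s \varphi_{kl}(s-u, \eta^*)\, dN^T_{l,u}\right)^2.
\end{equation*}
A direct adaptation of Proposition~\ref{prop:moments} bounds the second moment of the right-hand side uniformly in $s \in [0,T]$ and $T > 0$; together with Markov's inequality this gives $\mathcal{L}_T(\eta^*, g^*) - \mathcal{L}_T(\eta^*, g_{\mathfrak{d}}) \leq C' T \|g^* - g_{\mathfrak{d}}\|_\infty^2 + R_T + o_{\Prob(\eta^*,g^*)}(T)$. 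The martingale $R_T$ is handled via It\^o's isometry: a Lipschitz estimate on $\log(\lambda^T_{k,s}(\eta^*, g_{\mathfrak{d}})/\lambda^T_{k,s}(\eta^*, g^*))$, available thanks again to Assumption~\ref{ass:no_inhibition}, combined with Proposition~\ref{prop:moments}, yields $\mathbb{E}[R_T^2] = O(T)$, whence $R_T = o_{\Prob(\eta^*,g^*)}(T)$.

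Given $\varepsilon > 0$, it then suffices to pick $\mathfrak{d}$ large enough that $C'\|g^* - g_{\mathfrak{d}}\|_\infty^2 \leq \varepsilon$ to conclude. The main difficulty in executing this plan is translating the purely $L^\infty$ approximation afforded by the Bernstein polynomials into an $L^2$-type control on the log-likelihood; the uniform positive lower bound on intensities from Assumption~\ref{ass:no_inhibition} is essential here, as without it the ratio $\lambda^T_{k,s}(\eta^*, g_{\mathfrak{d}})/\lambda^T_{k,s}(\eta^*, g^*)$ could degenerate and the quadratic Taylor control on $\psi$ would break down.
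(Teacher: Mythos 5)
Your choices are sound and essentially reproduce the paper's strategy: the Bernstein approximant $g_{\mathfrak{d}}$ of $g^*$ (whose coefficients $g^*(k/\mathfrak{d})$ are nonnegative since $g^* \geq 0$), the decomposition of $\mathcal{L}_T(\eta^*,g^*) - \mathcal{L}_T(\eta^*,g_{\mathfrak{d}})$ into a nonnegative time integral of $\psi(z)=z-1-\log z$ plus a martingale $R_T$, the quadratic control $\psi(z)\leq (z-1)^2/z$ combined with the $1$-Lipschitz continuity of $\Phi_k$ and Assumption~\ref{ass:no_inhibition}, and the treatment of $R_T$ via It\^o's isometry and Proposition~\ref{prop:moments}. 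The paper does not write the quadratic bound explicitly --- it is content to pick $D$ large enough that $\lvert \mathfrak{L}(x,\eta^*,g^*_D)\rvert\leq\epsilon$ by continuity --- so your version is in fact more quantitative, and closer in spirit to what is needed for Remark~\ref{remark:explicit_d_in_special_case}.

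The genuine gap is in the step ``together with Markov's inequality this gives $\mathcal{L}_T(\eta^*, g^*) - \mathcal{L}_T(\eta^*, g_{\mathfrak{d}}) \leq C' T \|g^* - g_{\mathfrak{d}}\|_\infty^2 + R_T + o_{\Prob(\eta^*,g^*)}(T)$.'' Write $I_T\geq 0$ for the nonnegative time integral in your decomposition. Your moment bound gives $\mathbb{E}[I_T]\leq C'T\|g^*-g_{\mathfrak{d}}\|_\infty^2$, and Markov's inequality then only yields, for any $a>0$,
\begin{equation*}
\Prob\bigl[\,I_T/T > a\,\bigr]\ \leq\ \frac{C'\|g^*-g_{\mathfrak{d}}\|_\infty^2}{a},
\end{equation*}
a constant that is small for $\mathfrak{d}$ large but does \emph{not} tend to zero as $T\to\infty$ for fixed $\mathfrak{d}$. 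That is strictly weaker than what the $o_{\Prob(\eta^*,g^*)}(T)$ in the statement requires, namely $\Prob[I_T/T > a]\to 0$ as $T\to\infty$. To obtain this one needs a genuine law of large numbers, $T^{-1}I_T \to \int_0^1\mathfrak{L}(x,\eta^*,g_{\mathfrak{d}})\,\dif x$ in probability under $\Prob(\eta^*,g^*)$, which amounts to controlling the temporal correlations of the integrand rather than merely its marginal moments. This is exactly what Proposition~\ref{prop:ergodicity} supplies, and it is the ingredient the paper invokes at this point in its proof. Replace your appeal to Markov's inequality by an appeal to Proposition~\ref{prop:ergodicity}; your quadratic estimate then bounds the resulting deterministic limit by $C'\|g^*-g_{\mathfrak{d}}\|_\infty^2$, and the rest of your argument goes through unchanged.
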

\begin{proof}
    For any $D \in \mathbb{N}^\star$, let us introduce the pseudo-estimator $(\eta^*,g^*_D)$ with $g^*_D$ defined by
    \begin{equation*}
        g^*_D(t)= \sum_{k=0}^D \binom{D}{k} t^k (1-t)^{D-K} g^{*}\big( \frac{k}{D}\big)
    \end{equation*}
    so that $\lVert g^*_D - g^*\rVert_{L^\infty} \to 0$ as $D \to \infty$. By Proposition~\eqref{prop:ergodicity}, for any $D \in \mathbb{N}^\star$,
    \begin{equation*}
        \frac{1}{T}
        \mathcal{L}_T(\eta^*,g^*)
        -
        \frac{1}{T}
        \mathcal{L}_T(\eta^{*},g^{*}_D)
        =
        \int_0^1 \mathfrak{L}(x,\eta^*,g^*_D) \dif x
        +
        o_{\Prob(\eta^*,g^*)}(1),
    \end{equation*}
    where 
    \begin{equation*}
        \mathfrak{L}(x,\eta^*,g^*_D)
        =
        \sum_{k=1}^K
        \mathbb{E}\Bigl[\lambda^{x,\infty}_{k,0}(\eta^*,g^*)
        \Bigl\{
            \log\Big( 
                \frac{\lambda^{x,\infty}_{k,0}(\eta^*,g^*_D)}{\lambda^{x,\infty}_{k,0}(\eta^*,g^*)}
            \Big)
            -
            \Big( 
                \frac{\lambda^{x,\infty}_{k,0}(\eta,g^*_D)}{\lambda^{x,\infty}_{k,0}(\eta^*,g^*)}
                -
                1
            \Big)
        \Bigr\}\Bigr].
    \end{equation*}
    One may pick $D$ sufficiently large that the preceding display obeys $  -\epsilon \leq \mathfrak{L}(x,\eta,g^*_D)\leq0$, see again the proof of Proposition~\ref{prop:consistent} for the existence of a well-separated maximum of $x \mapsto \log(x)-(x-1)$.  By definition of $(\hat{\eta}_T^{\mathfrak{d}},\hat{g}^\mathfrak{d}_T)$, one has $\mathcal{L}_T(\hat{\eta}_T^{\mathfrak{d}},\hat{g}^\mathfrak{d}_T)>\mathcal{L}_T(\eta^*,g^D)$ as long as $\mathfrak{d}\geq D$.  The Lemma thus follows by setting $D$ large enough, and then letting $T \to \infty$.
\end{proof}

\begin{Lemma}\label{Lemma1}
    Under Assumptions~\ref{ass:lipschitz} to~\ref{ass:convexity}, there is some $\mathcal{K}>0$ such that
    \begin{equation*}
     \mathcal{L}_T(\eta^*,g^{*})
     \geq \sup_{\eta,C} \mathcal{L}(\eta,C)
     +
     T\mathcal{K} 
     +
     o_{\Prob(\eta^*,g^*)}(T).
    \end{equation*}
\end{Lemma}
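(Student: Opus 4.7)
My plan is to recycle the ergodic machinery developed for the proof of Proposition~\ref{prop:consistent}, now applied to the log-likelihood difference $\mathcal{L}_T(\eta, C) - \mathcal{L}_T(\eta^*, g^*)$ with a \emph{constant} reproduction rate, and then to show that the resulting deterministic limit functional is uniformly strictly negative under the alternative $g^*$ non-constant.

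Concretely, setting $r^T_{k,s}(\eta,C) = \lambda^T_{k,s}(\eta,C)/\lambda^T_{k,s}(\eta^*,g^*)$ and decomposing $\dif N^T_{k,s} = \dif M^T_{k,s} + \lambda^T_{k,s}(\eta^*,g^*)\dif s$, I would re-express $\mathcal{L}_T(\eta,C) - \mathcal{L}_T(\eta^*,g^*)$ as
\begin{equation*}
\sum_{k=1}^K \int_0^T \lambda^T_{k,s}(\eta^*,g^*)\bigl\{\log r^T_{k,s} - (r^T_{k,s}-1)\bigr\}\dif s + \sum_{k=1}^K \int_0^T \log r^T_{k,s}\dif M^T_{k,s}.
\end{equation*}
The martingale term will vanish uniformly in $(\eta,C)$ via the Doob + Sobolev argument already deployed in the proof of Proposition~\ref{prop:consistent} (it is $o_{\Prob(\eta^*,g^*)}(T)$), and Proposition~\ref{prop:ergodicity} applied to the finite variation term will yield the uniform convergence
\begin{equation*}
\frac{1}{T}[\mathcal{L}_T(\eta,C) - \mathcal{L}_T(\eta^*,g^*)] \to V(\eta,C) = \sum_{k=1}^K \int_0^1 \mathbb{E}\bigl[\lambda^{x,\infty}_{k,0}(\eta^*,g^*)\{\log r_{k,x} - (r_{k,x}-1)\}\bigr]\dif x,
\end{equation*}
where $r_{k,x} = \lambda^{x,\infty}_{k,0}(\eta,C)/\lambda^{x,\infty}_{k,0}(\eta^*,g^*)$. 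It will then suffice to exhibit some $\mathcal{K}>0$ with $\sup_{(\eta,C) \in \Gamma \times \R_+^*} V(\eta,C) \leq -2\mathcal{K}$.

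The crucial step is establishing the pointwise strict negativity $V(\eta,C)<0$. Since $\log(u)-(u-1) \leq 0$ with equality iff $u=1$, certainly $V \leq 0$; and because Assumption~\ref{ass:no_inhibition} keeps $\lambda^{x,\infty}_{k,0}(\eta^*,g^*)$ bounded away from zero, $V(\eta,C)=0$ would force $\lambda^{x,\infty}_{k,0}(\eta,C) = \lambda^{x,\infty}_{k,0}(\eta^*,g^*)$ almost surely, for every $k$ and a.e. $x$. I would then invoke the strict monotonicity of $\Phi_k$ under Assumption~\ref{ass:convexity} to strip off the activation and reduce the equality to
\begin{equation*}
\mu_k(x,\eta) + C\sum_l \int_{-\infty}^0 \varphi_{kl}(-s,\eta)\dif N^{x,\infty}_{l,s} = \mu_k(x,\eta^*) + g^*(x)\sum_l \int_{-\infty}^0 \varphi_{kl}(-s,\eta^*)\dif N^{x,\infty}_{l,s}.
\end{equation*}
Taking expectations and second moments, and invoking the self-consistency equation for the stationary mean intensity, would then reduce the above to the proportionality $C\varphi_{kl}(\cdot,\eta) = g^*(x)\varphi_{kl}(\cdot,\eta^*)$ for almost every $s$ and $x$; comparing at two distinct $x_1,x_2 \in [0,1]$ would yield $g^*(x_1) = g^*(x_2)$ by Assumption~\ref{ass:identifiability}, contradicting the non-constancy of $g^*$.

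To conclude, a compactness argument delivers the uniform bound: $V$ is continuous on $\Gamma \times \R_+$ under the regularity Assumptions, and $V(\eta,C) \to -\infty$ as $C \to \infty$ because $r_{k,x}$ then blows up and $\log(r)-(r-1) \to -\infty$. The supremum is therefore attained on a compact subset and, by the strict negativity just established, equals $-2\mathcal{K}$ for some $\mathcal{K}>0$. The hard part will clearly be the pointwise strict negativity of $V$: it is the only step that truly uses the non-constancy of $g^*$, and it hinges on injectivity of $\Phi_k$ (Assumption~\ref{ass:convexity}) together with the structural identifiability of $\eta$ (Assumption~\ref{ass:identifiability}); the remaining martingale control and the coercivity in $C$ are routine.
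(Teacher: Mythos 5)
Your proof follows the same overall architecture as the paper's — ergodic convergence of the normalized log-likelihood difference to the Kullback-type functional $V(\eta,C)=\sum_k\int_0^1\mathbb{E}\bigl[\lambda^{x,\infty}_{k,0}(\eta^*,g^*)\{\log r_{k,x}-(r_{k,x}-1)\}\bigr]\dif x$, followed by an identifiability argument to get strict negativity — but it diverges at the quantitative step. The paper refines the obvious sign of $\log u-(u-1)$ into a \emph{quantitative} lower bound via the elementary inequality $(u-1)-\ln u\geq\tfrac{1}{2}(u-1)^2/(u+1)$ followed by a reverse H\"older estimate, ending with a bound of the form $C_{\eta^*,g}\sum_k\int_0^1\mathbb{E}\bigl[\lvert\lambda^{x,\infty}_{k,0}(\eta^*,g^*)-\lambda^{x,\infty}_{k,0}(\eta,C)\rvert\bigr]^2\dif x$; that explicit form is precisely what yields the threshold on $\mathfrak{d}$ in Remark~\ref{remark:explicit_d_in_special_case}. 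You instead argue qualitatively (equality in $\log u\leq u-1$ forces $r_{k,x}=1$) and then push to a uniform bound by continuity of $V$ plus coercivity as $C\to\infty$; this is sound (do also observe that $V(\eta,0)<0$, since $\lambda^{x,\infty}(\eta,0)=\Phi[\mu(x,\eta)]$ is deterministic while $\lambda^{x,\infty}(\eta^*,g^*)$ is not), and arguably cleaner for the Lemma in isolation, but it is less informative than the paper's route.

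There is one genuine gap in your sketch: the passage from the a.s.\ equality, after stripping $\Phi_k$ via Assumption~\ref{ass:convexity},
\begin{equation*}
\mu_k(x,\eta)+C\sum_l\int_{-\infty}^0\varphi_{kl}(-s,\eta)\dif N^{x,\infty}_{l,s}
=
\mu_k(x,\eta^*)+g^*(x)\sum_l\int_{-\infty}^0\varphi_{kl}(-s,\eta^*)\dif N^{x,\infty}_{l,s}
\quad\text{a.s.}
\end{equation*}
to the pointwise proportionality $C\varphi_{kl}(\cdot,\eta)=g^*(x)\varphi_{kl}(\cdot,\eta^*)$ does \emph{not} follow from "taking expectations and second moments and the self-consistency equation". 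Matching a finite number of moments of two linear functionals of $N^{x,\infty}$ does not determine the integrands. The correct argument, as in the paper, is a support argument: under Assumption~\ref{ass:no_inhibition} the random measure $N^{x,\infty}$ positively charges every interval of $\R$, so a.s.\ coincidence of the two sides forces the jump-by-jump coefficients to agree, i.e.\ the proportionality of the kernels. With that repaired, note also that Assumption~\ref{ass:identifiability} is used only to exclude $\eta\neq\eta^*$; once $\eta=\eta^*$, the proportionality reduces to $g^*(x)=C$ for a.e.\ $x$, and non-constancy of $g^*$ gives the contradiction directly, rather than "comparing at two $x$'s via identifiability" as you write.
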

\begin{proof}
    Working as we have done in the proof of Lemma~\ref{Lemma2} we obtain that
    \begin{equation*}
        \frac{1}{T}\mathcal{L}_T(\eta^*,g^*)
        -
        \frac{1}{T}
        \sup_{\eta \in \Gamma C>0}
        \mathcal{L}_T(\eta,C)
        \geq 
        \inf_{\eta \in \Gamma, C>0} 
        \Bigl( 
        \int_0^1
        -\mathfrak{L}(x,\eta,C)
        \dif x
        \Bigr)
        +
        o_{\Prob(\eta^*,g^*)}(1),
    \end{equation*}
    where, using the elementary inequality
    $
        (x-1)-\ln(x) \geq\frac{1}{2}(x-1)^2(x+1)^{-1}
    $,
    the function
    \begin{equation*}
         -\mathfrak{L}(x,\eta,C)
        =
        \sum_{k=1}^K
        \mathbb{E}\left[ \lambda^{x,\infty}_{k,0}(\eta^*,g^{*})
        \Big\{ \Big( \frac{\lambda^{x,\infty}_{k,0}(\eta,C)}{\lambda^{x,\infty}_{k,0}(\eta^*,g^{*})} - 1\Big) - \log\Big( \frac{\lambda_{k,0}^{x,\infty}(\eta,C)}{\lambda^{x,\infty}_{k,0}(\eta^*,g^*)}\Big)\Big\}\right]
    \end{equation*}
    is non-negative and bounded from below by
    \begin{equation}\label{equ:elementary_bound}
        \sum_{k=1}^K
         \mathbb{E}\left[  \frac{1}{2} \frac{ (\lambda^{x,\infty}_{k,0}(\eta^*,g^*)-\lambda^{x,\infty}_{k,0}(\eta,C) )^2}{ \lambda^{x,\infty}_{k,0}(\eta^*,g^*)+\lambda^{x,\infty}_{k,0}(\eta,C)}\right].
    \end{equation}
   Applying the reverse Hölder inequality at $p=2$ with the measure $\dif \Prob \otimes \dif x$,~\eqref{equ:elementary_bound} is larger than
   \begin{equation}\label{equ:holder_bound}
     \frac{1}{2} 
     \sum_{k=1}^K \frac{ \int_0^1 \mathbb{E}\bigl[ \lvert \lambda_{k,0}^{x,\infty}(\vartheta^*,g^*) -  \lambda_{k,0}^{x,\infty}(\vartheta,C)  \rvert \bigr]^2 \dif x}
     {\int_0^1 \mathbb{E}[ \lambda_{k,0}^{x,\infty}(\vartheta^*,g^*) +  \lambda_{k,0}^{x,\infty}(\vartheta,C)  ]\dif x}.
   \end{equation}
   Using that $\sup_{\eta,C} \mathbb{E}[\lambda^{,x\infty}(\eta,C)]<\infty$, one obtains some constant $C_{\vartheta^*,g^*}>0$ such that
    \begin{equation}\label{equ:quadratic_bound}
         \frac{1}{T}\mathcal{L}_T(\eta^*,g^*)
        -
        \frac{1}{T}
        \sup_{\eta \in \Gamma C>0}
        \mathcal{L}_T(\eta,C)
        \geq 
        C_{\eta^*,g}
        \sum_{k=1}^K
        \int_0^1 \mathbb{E}\bigl[ \lvert 
       \lambda^{x,\infty}_{k,0}(\eta^*,g^*)
        -
        \lambda^{x,\infty}_{k,0}(\eta,C)
        \rvert \bigr]^2
        \dif x.
    \end{equation}
    We have shown in Lemma~\ref{lemma:continuity_in_dx} that $x \mapsto \lambda^{x,\infty}$ is uniformly equicontinuous in $x$ in $L^1(\Prob)$-norm, which bears two useful consequences here.  Firstly, that the previous estimate reaches a minimal value over the compact set $\Theta \times [0,1]$ at some $(\eta_m,C_m)$. Secondly, that it suffices to obtain some $x \in [0,1]$ such that $\lambda^{x,\infty}(\eta^*,g^*) \neq \lambda^{x,\infty}(\eta_m,C_m)$ with positive probability for this minimum to be positive. However, the activation functions $\Phi_k$ are bijective and continuous under Assumption~\ref{ass:convexity}, and the random measure $N^{x,\infty}$ positively charges any interval in $\R$ under Assumption~\ref{ass:no_inhibition}. Hence for any $x \in [0,1]$, that $\lambda^{x,\infty}(\eta^*,g^*) = \lambda^{x,\infty}(\eta_m,C_m)$ holds $\Prob(\eta^*,g^*)$-a.s would imply that
\begin{equation}\label{equ:condition_for_0}
    \varphi(\cdot,\eta^*) = \frac{g^*(x)}{C_m}\varphi(\cdot,\eta_m).
\end{equation}
 Condition~\eqref{equ:condition_for_0} uniquely determines $(\eta_m,C_m)$ under Assumption~\ref{ass:identifiability}, hence it may not hold for two different $x,x' \in [0,1]$ such that $g^*(x) \neq g^*(x')$.  It therefore suffices that $g^*$ be non-constant for the lower bound in~\eqref{equ:quadratic_bound} to be positive. 
\end{proof}
Before we proceed to the rest of the proof of Proposition~\ref{prop:consistent}, let us revisit the bound~\eqref{equ:elementary_bound}. 

\begin{remark}\label{remark:explicit_d}
    Suppose that $\eta^*$ is known and one estimates $g$ only. Applying Jensen's inequality in lieu of Hölder's yields the bound~\eqref{equ:holder_bound} with the absolute value omitted from the expectation. Our final lower bound then takes the form
 \begin{equation*}
     \inf_{\{C>0\}}
     \sum_{k=1}^K 
     \int_0^1
     \mathbb{E}[\lambda_{k,0}^{x,\infty}(\eta^*,g^*)](g^*(x)-C)^2 \dif x
     \geq 
     C_{\Phi,\mu}
     \inf_{\{C>0\}}\sum_{k=1}^K\int_0^1 (g^*(x)-C)^2 \dif x,
 \end{equation*}
 where  $ C_{\Phi,\mu}= \sum_{k=1}^K \inf_{x\in [0,1]}
     \Phi_k[ \mu_k(x,\vartheta^*)]$.
\end{remark}

    \begin{Lemma}
        For any non-constant $g^*$, there is some $M>0$ such that, for any $\mathfrak{d} >M$, $\Lambda_T^{\mathfrak{d}} \to \infty$  under $\Prob(\eta^*,g^*)$  with probability going to $1$ as $T \to \infty$ .
    \end{Lemma}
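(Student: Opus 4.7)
The plan is to combine Lemmas~\ref{Lemma1} and~\ref{Lemma2} directly. Lemma~\ref{Lemma1} produces a strictly positive gap $\mathcal{K}>0$, depending only on $\eta^*$ and $g^*$, between the log-likelihood evaluated at the true parameter and the best constant-$g$ log-likelihood. Lemma~\ref{Lemma2} shows that, provided $\mathfrak{d}$ is chosen large enough, the best Bernstein-polynomial log-likelihood approximates $\mathcal{L}_T(\vartheta^*,g^*)$ within an arbitrarily small linear-in-$T$ error. The idea is then simply to pick $\epsilon$ small enough relative to $\mathcal{K}$ in Lemma~\ref{Lemma2} and to read off a lower bound of order $T$ for $\Lambda^{\mathfrak{d}}_T$.

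More precisely, set $\epsilon = \mathcal{K}/2$ and let $M = M(\epsilon)$ be the threshold provided by Lemma~\ref{Lemma2} for this choice. For any $\mathfrak{d}\geq M$, writing $\Lambda^{\mathfrak{d}}_T/2 = \sup_{\vartheta\in\Theta,\, g\in \mathbb{B}_{\mathfrak{d}}[0,1]}\mathcal{L}_T(\vartheta,g) - \sup_{\eta\in\Gamma,\,C>0}\mathcal{L}_T(\eta,C)$, Lemma~\ref{Lemma2} yields
\begin{equation*}
    \sup_{\vartheta\in\Theta,\, g\in \mathbb{B}_{\mathfrak{d}}[0,1]}\mathcal{L}_T(\vartheta,g) \geq \mathcal{L}_T(\eta^*,g^*) - T\,\frac{\mathcal{K}}{2} + o_{\Prob(\eta^*,g^*)}(T),
\end{equation*}
while Lemma~\ref{Lemma1} gives
\begin{equation*}
    \mathcal{L}_T(\eta^*,g^*) - \sup_{\eta\in\Gamma,\,C>0}\mathcal{L}_T(\eta,C) \geq T\mathcal{K} + o_{\Prob(\eta^*,g^*)}(T).
\end{equation*}
Adding the two inequalities leaves $\Lambda^{\mathfrak{d}}_T \geq T\mathcal{K} + o_{\Prob(\eta^*,g^*)}(T)$, which diverges to $+\infty$ in probability as $T\to\infty$, ending the proof.

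There is no real obstacle left here: Lemma~\ref{Lemma1} has already converted the non-constancy of $g^*$ into the positive constant $\mathcal{K}$ via the identifiability Assumption~\ref{ass:identifiability} and the strict convexity of $x\mapsto(x-1)-\log x$, and Lemma~\ref{Lemma2} has already absorbed the approximation error from restricting $g$ to Bernstein polynomials of degree $\mathfrak{d}$. The only thing to check is that the threshold $M$ depends only on $(\eta^*,g^*)$, which is clear since $\mathcal{K}$ is determined by $(\eta^*,g^*)$ and $M=M(\mathcal{K}/2)$ depends on $g^*$ through the uniform approximation speed of $g^*$ by its Bernstein polynomials.
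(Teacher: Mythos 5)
Your proof is correct and follows the paper's own argument essentially verbatim: decompose $\Lambda^{\mathfrak{d}}_T/2$ by inserting $\mathcal{L}_T(\eta^*,g^*)$, invoke Lemma~\ref{Lemma1} for the positive $O(T)$ gap and Lemma~\ref{Lemma2} with $\epsilon=\mathcal{K}/2$ to control the approximation loss, and conclude that $\Lambda^{\mathfrak{d}}_T \geq T\mathcal{K} + o_{\Prob}(T)$. The only cosmetic difference is that the paper appends a remark linking the threshold to the Bernstein approximation rate (for Remark~\ref{remark:explicit_d_in_special_case}), which you omit but which is not needed for the statement itself.
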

    \begin{proof}
        For any $\mathfrak{d}\in \mathbb{N}^\star$,
        \begin{align*}
            \sup_{\eta \in \Gamma,g \in \mathbb{R}_{\mathfrak{d}[X]}}\mathcal{L}_T(\eta,g  )
            &- 
            \sup_{\eta \in \Gamma,C>0}\mathcal{L}_T(\eta,C)
            \\
            &=
            \bigl(  \sup_{\eta \in \Gamma,g \in \mathbb{R}_{\mathfrak{d}[X]}}\mathcal{L}_T(\eta,g) 
            -\mathcal{L}_T(\eta^*,g^*)
            \bigr)
            +
            \bigl(
            \mathcal{L}_T(\eta^*,g^*)
            -
              \sup_{\eta \in \Gamma,C>0}\mathcal{L}_T(\eta,C)
            \bigr)
        \end{align*}
        By Lemma~\ref{Lemma1}, the second term of the preceding sum is larger than  $\mathcal{K}T +o_{\Prob(\eta^*,g^*)}(T)$ for some $\mathcal{K}>0$. By Lemma~\ref{Lemma2} the first one may be chosen larger than $-\frac{1}{2}\mathcal{K}T +o_{\Prob(\eta^*,g^*)}(T)$ by picking $\mathfrak{d}$ large enough, indepently from the value of $T$. Consequently, there is some $\mathfrak{d}^*$ such that, for any $\mathfrak{d}\geq \mathfrak{d}^*$,
        \begin{equation*}
            \Lambda^\mathfrak{d}_T =
            2 (
              \sup_{\eta \in \Gamma,g \in \mathbb{R}_{\mathfrak{d}[X]}}\mathcal{L}_T(\eta,g) -    \sup_{\eta \in \Gamma,C>0}\mathcal{L}_T(\eta,C)
            )
            \geq 
           \mathcal{K}T +o_{\Prob}(T),
        \end{equation*}
        which goes to $\infty$ in probability as $T \to \infty$. Recalling furthermore that the error of the Bernstein approximation refines at rate $O(\mathfrak{d}^{-\nicefrac{1}{2}})$, Remark~\ref{remark:explicit_d_in_special_case} deduces from Remark~\ref{remark:explicit_d}.
    \end{proof}

    \paragraph{\textbf{Acknowledgements.}} Thomas Deschatre and Pierre Gruet acknowledge support from the \textit{FiME} Lab.
\bibliographystyle{apalike}
\bibliography{bibli}

\end{document}